\newtheorem{defi}{Definition}[section]
\newtheorem{lem}[defi]{Lemma}
\newtheorem{thm}[defi]{Theorem}
\newtheorem{cor}[defi]{Corollary}
\newtheorem{prop}[defi]{Proposition}
\theoremstyle{remark}
\newtheorem{rem}[defi]{Remark}
\newcommand{\N}{\mathbb{N}}
\newcommand{\E}{\mathbb{E}}
\renewcommand{\P}{\mathbb{P}}
\newcommand{\R}{\mathbb{R}}
\newcommand{\eps}{\varepsilon}
\begin{document}


\section{Introduction}

A well-known model in population genetics named {\em Muller's ratchet}  (cf. \cite{M64, MS78, EPW09, PSW12, ME13} and references therein) considers, in its bare bones version, the interplay between selection and stepwise (slightly) deleterious mutation in the absence of recombination, for a population of constant size $N \gg 1$. Briefly spoken, selection decreases  the expected reproductive success of those individuals that have a higher mutational load. Since the mutational load is inherited, selection tends to decrease the overall mutational load in the population. Because of the assumed unidirectional mutation and randomness in the reproduction, the currently lightest load eventually disappears from the population. This is phrased by H.J. Muller in his pioneering paper \cite{M64} as follows:  {\em ``\ldots an irreversible ratchet mechanism exists  in the non-recombining species \ldots that prevents selection, even if intensified, from reducing  the mutational loads below the lightest  that were in existence when the intensified selection started, whereas, contrariwise, `drift',  and what might be called  `selective noise' must allow occasional slips  of the lightest loads  in the direction of increased weight.''}
Thanks to recombination however, sexual organisms are able to avoid such an accumulation of deleterious mutations. This could be one of the explanations of the ubiquity of sexual reproduction among eukaryotes despite its many costs \cite{felsenstein1974evolutionary}.

We will now give a short description of the model and the main questions addressed in this paper. A more detailed presentation of the model, a formulation of the main results and a preview of the proof strategy will be given in Section \ref{MMR}.

Each individual carries, as its current  {\em type}, a number $\kappa$ 
of deleterious mutations. The number of mutations along each lineage increases by~$1$ at a rate $m_N$, and as soon as an individual reproduces, its current type is inherited to its 'daughter'. Reproduction happens according to a Moran dynamics with selection, where the fitness difference between two individuals of type $\kappa$ and $\kappa'$  is $\frac {s_N}N \Phi(\kappa'-\kappa)$ for a selection 
parameter $s_N$ and a non-decreasing antisymmetric function $\Phi: \mathbb Z \to \mathbb R$.  (For the classical variant of Muller's ratchet, $\Phi$ is 
the identity function  on $\mathbb Z$, which means that the effect of selection is proportional to the difference of mutational loads.) The individual-based dynamics in this model, which we briefly call the {\em $\Phi$-ratchet}, arises as an independent superposition of the following three ingredients:

\smallskip  \noindent
 {\bf Moran resampling.}  For each pair of individuals $(i,j)$, irrespective of their types,  $j$ is replaced by a newborn daughter of individual $i$  at rate $\frac 1{2N}$.

 \smallskip\noindent
{\bf Selective reproduction.} For each pair of individuals $(i,j)$ of types $\kappa$ and~$\kappa'$,  individual~$j$ is replaced by a newborn daughter of individual $i$ at rate $\frac {s_N}N \Phi(\kappa'-\kappa)\mathbf 1_{\{\kappa' > \kappa\}}$.

\smallskip\noindent
{\bf Stepwise mutation.}  For each individual, its type is increased by 1 at rate $m_N$.

\smallskip
 This dynamics leads to a sequence of times at which the currently lowest (and thus selectively 'best') type in the population disappears. These times will be referred to as 'click times of the ratchet'. In certain regimes of the parameters $s_N$ and~$m_N$,  as $N$ becomes large the click times happen rarely and a quasi-stationary type frequency profile builds up between the click times.
The following questions thus call for an answer:
\begin{enumerate}
\item[A.] What is the rate (of the click times) of the ratchet?
\item[B.] What is the quasi-stationary type frequency profile?
\end{enumerate}
For the classical variant of Muller's ratchet, a fully rigorous asymptotic analysis of these problems  beyond existence results is a notorioulsy difficult and in large parts still unsolved task, see e.g. \cite{MPV22}.
 
We propose a variant of the fitness function $\Phi$ which leads to a model that  turns out to be tractable by modern probabilistic techniques, allowing for quantitative results for the rate of the ratchet and the quasi-stationary type profile. This specific choice, denoted by $\varphi$, is 

\vspace{-0.4cm}
\begin{equation}\label{defphi}
\varphi(\kappa'-\kappa) := \mathbf 1_{\{\kappa'-\kappa > 0\}} - \mathbf 1_{\{\kappa' - \kappa<0\}}.
\end{equation}
The essential difference to the classical variant of Muller's ratchet thus is that the selective advantage does not depend on the {\em value} of the difference of $\kappa'$ and $\kappa$, but only on the difference's {\em sign}. This corresponds to replacing {\em proportional selection} by  {\em binary tournament selection} \cite{blickle1996comparison, paixao2015toward}, whose  effect may be imagined as due to pairwise {\em fights} between randomly chosen individuals, where the individual of 'better' type outcompetes that of  worse type. From a biological point of view, this provides a clear cut and reasonable alternative to the classical variant. From the perspectives of mathematis, the tradeoff between the complexity and the tractability of the model is attractive. The present work analyses a {\em subcritical} regime in which the mutation-selection ratio $\tfrac {m_N}{s_N}$ remains constant and below $1$. The {\em near-critical} regime, in which $\tfrac {m_N}{s_N} \uparrow 1$, presents additional technical challenges. We have started to study this regime and some interesting links to the classical variant of Muller's ratchet in \cite{igelbrink2023muller}.
\section{Model and main results}\label{MMR}

We now give a  definition of the jump rates of the type frequencies of the $\varphi$-ratchet with selection strength $s_N$ and mutation rate $m_N$. For fixed $N\in \mathbb N = \{1,2, \ldots\}$ and for $\kappa \in \mathbb N_0 = \{0,1,2,\ldots\}$, let $\xi_\kappa(t)= \xi_{\kappa}^{(N)}(t)$ be the proportion (or {\em frequency}) of type $\kappa$-individuals at time $t$. (Here and below we will sometimes suppress the index $N$.)  Denoting by $(e_i, i \in \N_0)$ the canonical basis of $\N_0^{\N_0}$, the process $(\xi_\kappa, \kappa \in \N_0)$ jumps with the following increments:

\begin{itemize}
\item[$\bullet$] Mutation: for $\kappa \in \N_0$,
\begin{equation}\label{mut}
(e_{\kappa+1}-e_{\kappa})/N \quad \text{is added at rate} \quad m_N N \xi_\kappa 
\end{equation}
\item[$\bullet$] Selection: for $\kappa < \kappa'$ 
\begin{equation}\label{sel}
(e_\kappa-e_{\kappa'})/N \quad \text{is added at rate} \quad s_N N \xi_\kappa \xi_{\kappa'} 
\end{equation}
\item[$\bullet$] Resampling: for $\kappa \neq \kappa'$ 
\begin{equation}\label{coal}
(e_\kappa-e_{\kappa'})/N \quad \text{is added at rate} \quad \frac{N}{2} \xi_\kappa \xi_{\kappa'}.
\end{equation}
\end{itemize}
Intuitively spoken, the type $\kappa$-subpopulation is 'fed through mutations' from the type\mbox{$(\kappa-1)$}-subpopulation, is 'selectively attacked' by the  type$(<\kappa)$-subpopulation and  'selectively attacks' the type$(>\kappa)$-subpopulation.
Consequently, these rates imply that for any~$\kappa$, $(\xi_0,...,\xi_\kappa)$ is an autonomous process.
\begin{defi}[Click times and  type frequency profile]\label{clicktimes}
a) The {\em best type at time $t$} is
\begin{equation}\label{besttypexi}
K_N^\ast(t):= \min\left\{\kappa \in \N_0: \xi_{\kappa}^{(N)} (t) > 0\right\}.
\end{equation}
b) The jump times of the counting process $K_N^\ast$  are called the {\em click times} of the ratchet. 
\\
c) The {\em empirical type profile at time $t$ (seen from the currently best type)} is 
\begin{equation}\label{defXk1} X_k^{(N)}(t):= \xi_{K_N^\ast(t)+k}^{(N)}(t), \quad k \in \mathbb N_0. 
\end{equation}
\end{defi}

To obtain quantitative results for the click rates and the quasi-stationary type profile, we will throughout the paper consider the case of {\em moderate} selection and mutation
\begin{equation}\label{moderatesm}
s_N = \frac{\alpha}{f(N)}, \quad m_N= \frac{\mu}{f(N)},
\end{equation}
where $\mu < \alpha$, $f(N)\to \infty$ and $f(N)= o\left(\frac N{\ln \ln N}\right)$ as $N\to \infty$. In particular, this implies that $m_N \to 0$, $Nm_N \to \infty$, and $m_N$  and $s_N$ are of the same order.

\begin{thm}[Asymptotic rate of clicks]\label{thmclicks}
Assume that all individuals at time $0$ are of type~$0$, i.e. $\xi_\kappa^{(N)}(0) = \delta_{0\kappa}$, $\kappa \in \N_0$. Then there exists a sequence $(\theta_N)$ with 
\begin{equation} \label{order_time_ext}
\ln \theta_N \sim 2 (\alpha - \mu + \mu \ln (\mu/\alpha))\frac{N}{f(N)}  \quad \mbox{ as } N \to \infty,
\end{equation}
such that the sequence of time-rescaled click time processes 
$$\left( K^*_N\left(f(N)\theta_N\tau \right)  \right)_{\tau\ge 0}, \quad N=1,2,\ldots$$
converges in distribution as $N \to \infty$ to a rate 1 Poisson counting process (when restricted to $[0,T]$ for each $T>0$.)
\end{thm}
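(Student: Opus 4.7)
The proof strategy is to treat each click as a rare event separating long intervals during which the type-frequency profile sits near a quasi-stationary configuration, and then to apply a classical Poisson approximation for rare events.

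First, I would argue that under the scaling \eqref{moderatesm}, starting from $\xi_\kappa^{(N)}(0) = \delta_{0\kappa}$, the profile $(X_k^{(N)})_{k\ge 0}$ seen from the current best type relaxes on a time scale $O(f(N))$ to a quasi-stationary profile $(x_k^*)_{k\ge 0}$ with $x_0^* > 0$. This relaxation statement is the content of the companion quasi-equilibrium result announced in the abstract, which characterises $(x_k^*)_{k\ge 0}$ as the normalised attractor of a hierarchical multitype logistic system. Since $x_0^* > 0$, each click is a genuine rare event: the random frequency $X_0^{(N)}$ must fluctuate all the way down to $0$, against a favourable deterministic drift of order $\alpha-\mu$.

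Next, I would compute the exponential decay rate of the probability that, starting from the quasi-equilibrium $(x_k^*)$, $X_0^{(N)}$ hits $0$ in a window of length $O(f(N))$. The key tool is the ancestral selection graph (ASG) decorated with mutations, announced in the abstract as the central dual object of the paper. Via the duality, the event that the best type at time $t$ is strictly larger than $0$ corresponds, after tracing the genealogy back from time $t$, to the event that every ancestral lineage picks up at least one mutation before reaching a type-$0$ ancestor at time $0$; in the alternative branching-random-walk description mentioned in the abstract, every leaf of the branching ancestral tree is displaced upward by a strictly positive amount. Selection contributes branches at rate $\asymp s_N$ per lineage while mutations add Poisson increments at rate $m_N$, so the probability that a single lineage accumulates substantially more mutations than the typical $\mu$ over a window of length $f(N)$ is governed by the Cram\'er transform of $\mathrm{Poi}(\alpha)$ evaluated at $\mu$. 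This Cram\'er transform is exactly the relative-entropy function $\alpha-\mu+\mu\ln(\mu/\alpha)$ appearing in \eqref{order_time_ext}; chaining the estimate across the coexisting lineages and accounting for a combinatorial factor $2$ coming from the Moran resampling rate $1/(2N)$ yields the overall exponent $2(\alpha-\mu+\mu\ln(\mu/\alpha))N/f(N)$, and hence \eqref{order_time_ext}.

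Finally, the Poisson limit on compact intervals follows from a standard strong-Markov argument: after each click the (shifted) profile re-equilibrates on a time scale $O(f(N)) = o(\theta_N f(N))$, so successive inter-click times are asymptotically independent and exponentially distributed with mean $\theta_N f(N)$. The \emph{main obstacle} is the sharp large-deviation estimate of the second step. Matching upper and lower bounds for the ASG/branching-random-walk event to logarithmic precision requires delicate control of the mutation-decorated ASG on time scales of order $f(N)$, and the assumption $f(N) = o(N/\ln\ln N)$ in \eqref{moderatesm} is dictated precisely by the need to uniformly rule out rare heavy deviations of the profile over the long intervals $[0,\theta_N f(N)]$ between successive clicks.
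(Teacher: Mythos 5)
Your high-level architecture (quasi-equilibrium between clicks, a rare-event estimate for the exponent, a renewal/Poisson limit) matches the paper's, but the step that actually produces \eqref{order_time_ext} is not a proof. The paper's route is: by duality, the relevant object is the number $Z_0^{(N)}$ of load-zero potential ancestors in the mutation-decorated ASG, which is an autonomous logistic birth-and-death process with per-capita birth rate $s_N(1-z/N)$ and death rate $m_N+(z-1)/(2N)$ (see \eqref{Z0rates}), hence carrying capacity $2(\alpha-\mu)N/f(N)$. A click corresponds to the extinction of this process out of its quasi-stationary distribution, and the exponent in \eqref{order_time_ext} is the action integral $\int_0^{2(\alpha-\mu)}\ln\bigl(\alpha/(\mu+x/2)\bigr)\,dx = 2(\alpha-\mu+\mu\ln(\mu/\alpha))$, obtained in Lemma \ref{exp_rate} by sandwiching $Z_0^{(N)}$ between explicit logistic chains and invoking the sharp quasi-stationary asymptotics of Chazottes--Collet--M\'el\'eard. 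Your proposed mechanism --- a Cram\'er transform of $\mathrm{Poi}(\alpha)$ per lineage, ``chained across the coexisting lineages'', with ``a combinatorial factor $2$ coming from the Moran resampling rate $1/(2N)$'' --- happens to reproduce the correct functional form (the Poisson relative entropy coincides with the action integral above), but it is not an argument: you never specify what the chaining is, the factor $2$ in fact enters through the carrying capacity $2(\alpha-\mu)$ (i.e.\ through the quadratic coalescence term), and no matching upper and lower bounds are produced. You have named the obstacle correctly but not overcome it.

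Two further gaps. First, your renewal step assumes that after a click the process restarts from a state that re-equilibrates quickly; this requires showing that when the current best class dies out, the next class is already large, so that clicks cannot cascade on a shorter time scale --- this is the content of Lemma \ref{lem_fast_ext}, which your sketch does not supply. Second, if you route the rare-event estimate through the ASG (as you suggest), you must then translate the backward clicks along the ASG into the forward clicks of $K_N^\ast$; this identification is nontrivial and occupies Proposition \ref{thm48} and Lemma \ref{lem10_1}, via the quick-merging Proposition \ref{lem_merging1}, and is absent from your proposal.
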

In particular, for the case of  {\em nearly strong selection} 
$ s_N =1/l(N)$, where $l(N)$ is any slowly varying function that converges to infinity with $N$,  Theorem \ref{thmclicks} says that the expected time between clicks is only slightly smaller than exponential in $N$. In contrast to this, for  {\em nearly weak selection}
 with $(\ln\ln N)/N \ll s_N \ll (\ln N)/N$,
 Theorem~\ref{thmclicks} says that  the timescale $f(N)\theta_N$ of clicks is asymptotically only slightly larger than the evolutionary timescale $N$.
\begin{thm}[Quasi-stationary type frequency profile] \label{thmprofile}
\begin{enumerate}
\item[a)]
Assume that all individuals at time $0$ are of type $0$. Let $(t_N)$ be a deterministic sequence of times such that
\begin{equation}\label{condtN}
 \frac{t_N}{ f(N)\ln N} \to \infty \quad \mbox{as } N \to \infty.
 \end{equation}
Then the empirical type frequency profile defined in \eqref{defXk1} obeys for all $k \in \N_0$
 \begin{equation*}
 X_k^{(N)}(t_N) \to p_k \quad \mbox{in probability as } N\to \infty,
 \end{equation*}
 where $(p_k)_{k\in \N_0}$ is a sequence of probability weights given by the recursion 
 \begin{equation}\label{recursion}
p_0 = 1-\frac \mu \alpha \quad and \quad p_{k}^2-p_k\left(1-\frac\mu\alpha-2\sum_{k'=0}^{k-1}p_{k'}\right) = \frac\mu\alpha p_{k-1}, \quad k \geq 1 .
 \end{equation}
 \item [b)] The recursion \eqref{recursion} is equivalent to the (mutation-selection equilibrium) system
\begin{eqnarray}\label{systeq} \alpha \, p_{k}  \left(\sum_{k'\in \mathbb N_0} p_{k'}\left(\mathbf 1_{\{k'>k\}}- \mathbf 1_{\{k'<k\}}\right)\right) =\mu \left(p_{k}-p_{k-1}\right), \quad k \geq 0, 
 \end{eqnarray}
 with the boundary conditions  $p_{-1} = 0$, $p_0 > 0$, $\sum\limits_{k\in \N_0}p_k=1$. 
\item[c)]
Two alternative probabilistic descriptions of $(p_k)_{k \in \N_0}$ given by \eqref{recursion} are as follows:
\begin{itemize}
\item Consider a Yule tree with splitting rate $\alpha$ whose branches are decorated by a rate~$\mu$ Poisson point process. Then $p_k$ is the probability that there is an infinite lineage carrying exactly $k$ points but no infinite lineage with less than $k$ points. 
\item Consider a branching random walk on $\N_0$ starting with one individual at the origin with binary branching at rate $\alpha$ (and no death) and with migration of individuals from $k$ to $k+1$ at rate $\mu$. 
Then, as $t\to \infty$, the minimal position of the individuals alive at time $t$ converges in law to a random variable with distribution $(p_k)_{k \in \N_0}$. 
\end{itemize}
\item[d)] With $\rho:= \mu/\alpha$, the tails of the probability weights $(p_k)$ given by  \eqref{recursion} are represented by the iterations
\begin{equation} 
\sum_{k>\ell} p_k =  \underbrace{\mathfrak{G}\circ... \circ\mathfrak{G}}_{\ell  \text{ times}}(\rho), \qquad \ell \in \N_0,
 \end{equation}
where
\vspace{-0.3cm}
\begin{equation}\label{defG}
\mathfrak{G}(u) :=  \frac 12 \left(1+\rho-\sqrt{(1+\rho)^2-4\rho u}\right), \quad 0\le u \le 1.
\end{equation}
 \item [e)]  Let $(p_k)_{k \in \N_0}$ be the probability weights given by \eqref{recursion}. Then 
\begin{itemize}
\item[(i)]
for $0 < \frac {\mu} {\alpha} < \frac 23 $, \quad \quad $k \mapsto p_k$ is strictly decreasing,
\item[(ii)]
for $\frac {\mu} {\alpha} = \frac 23$, \quad \qquad \quad  $k \mapsto p_k$ is decreasing with $p_0 = p_1 > p_2 > \cdots$, 
\item[(iii)]
for $\frac 23 < \frac {\mu} {\alpha} < 1$,   \quad $(p_k)_{k\in \N_0}$ is unimodal in the sense that there exist $k_1 \le k_2$ with $k_2-k_1\le 1$   for which $p_0<p_1<\cdots<p_{k_1} =p_{ k_2}$ and $p_{k_2} > p_{ k_2+1} > \cdots$.
\end{itemize}
In any case, $p_k \sim C\cdot \left(\frac \mu{\mu+\alpha}\right)^k$ as $k\to \infty$ for some constant  $C$ depending on $\frac \mu \alpha$. 

\end{enumerate}
\end{thm}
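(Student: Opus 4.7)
I would start with parts (b) and (d), which are purely algebraic. For (b), expanding $\sum_{k'} p_{k'}(\mathbf 1_{\{k'>k\}} - \mathbf 1_{\{k'<k\}}) = 1 - 2\sum_{k'<k} p_{k'} - p_k$ using $\sum p_{k'} = 1$, substituting into \eqref{systeq}, and rearranging immediately gives \eqref{recursion}. For (d) I set $S_\ell := \sum_{k>\ell} p_k$, so that $p_k = S_{k-1} - S_k$ with $S_{-1} = 1$. Summing \eqref{systeq} from $k = 0$ to $\ell$ and exploiting the factorization $(S_{k-1}-S_k)(S_{k-1}+S_k-1)$, the left-hand side telescopes to $\alpha S_\ell(1-S_\ell)$ and the right-hand side to $\mu(S_{\ell-1}-S_\ell)$. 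This produces the quadratic $S_\ell^2 - (1+\rho)S_\ell + \rho S_{\ell-1} = 0$, whose smaller root is $\mathfrak G(S_{\ell-1})$; iterating from $S_0 = \rho$ (obtained by setting $k=0$ in \eqref{systeq}) yields the iteration.

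Next I would turn to (c) and (e). For the Yule tree description in (c), let $U_k$ denote the probability that every infinite lineage carries at least $k$ mutations. Conditioning on whether the first event at the root is a branching (probability $\alpha/(\alpha+\mu)$) or a mutation (probability $\mu/(\alpha+\mu)$) gives $U_k = \tfrac{\alpha}{\alpha+\mu}U_k^2 + \tfrac{\mu}{\alpha+\mu}U_{k-1}$, exactly the recursion of (d); hence $U_k = S_{k-1}$ and $p_k = U_k - U_{k+1}$. For the BRW description, positions along lineages are nondecreasing, so the minimum over alive particles is nondecreasing in $t$ and bounded, hence almost surely convergent; the limit coincides with the minimal mutation count on an infinite lineage of the decorated Yule tree, which is distributed as $(p_k)$. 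For (e), substituting $\rho S_{k-1} = (1+\rho)S_k - S_k^2$ into $p_k = S_{k-1} - S_k$ gives the compact identity $p_k = S_k(1-S_k)/\rho$. Since $(S_k)$ strictly decreases from $\rho$ to $0$ and $s \mapsto s(1-s)$ is unimodal on $[0,1]$ with peak at $1/2$, the monotonicity/unimodality assertions follow by tracking on which side of $1/2$ the $S_k$ sit; the threshold $\rho = 2/3$ emerges from $p_0 = p_1 \iff S_1 = 1-\rho \iff \mathfrak G(\rho) = 1-\rho$. Linearizing $\mathfrak G$ at $0$ yields $\mathfrak G'(0) = \mu/(\mu+\alpha)$, whence $S_\ell \sim C'(\mu/(\mu+\alpha))^\ell$ and consequently $p_k \sim C(\mu/(\mu+\alpha))^k$.

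The main obstacle is part (a). My plan is to exploit the ancestral selection graph decorated by mutations (as foreshadowed in the abstract) as a duality device. Since $s_N, m_N = O(1/f(N))$, the evolutionary time unit is $f(N)$, while by Theorem \ref{thmclicks} no click has occurred before $t_N$ with probability tending to $1$. Within this window I would trace the ancestry of a sampled typed individual backward and couple its ASG, on the $f(N)$-scale, to the decorated Yule tree of (c), deducing that the empirical profile $(X_k^{(N)}(t_N))$ is close in probability, coordinate by coordinate, to $(p_k)$. The extra $\ln N$ factor in \eqref{condtN} is precisely what is required for the ASG to reach depth sufficient to resolve the limiting Yule-tree structure and for a first-passage-percolation-type propagation of the minimal type to settle. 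The hardest piece will be uniform control of the tail $\sum_{k > K} X_k^{(N)}(t_N)$ for large $K$, ruling out atypical stochastic excursions that could distort the profile before time $t_N$.
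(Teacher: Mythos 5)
Parts b)--e) of your proposal are essentially correct. Part b) is the same computation as the paper's. For d), your telescoping of the equilibrium system to obtain $S_\ell^2-(1+\rho)S_\ell+\rho S_{\ell-1}=0$ is a purely algebraic route, whereas the paper derives the same iteration probabilistically from a Galton--Watson tree embedded in the decorated Yule tree (Proposition \ref{pi_eqals_p}); both work, but you should say why $S_\ell$ is the \emph{smaller} root (e.g.\ the quadratic is negative at $\rho$ and $S_\ell\le S_0=\rho$). Your identity $p_k=S_k(1-S_k)/\rho$ yields a cleaner proof of e) than the paper's direct comparison of $p_0,p_1$ and sign analysis of \eqref{systeq}, and your renewal/fixed-point argument for c) matches the paper's in spirit (again, the choice of root in $U_k=\tfrac{\alpha}{\alpha+\mu}U_k^2+\tfrac{\mu}{\alpha+\mu}U_{k-1}$ needs the standard minimal-solution argument).

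For part a), however, there is a genuine gap. The Yule-tree coupling of the ASG of a sampled individual $(i,t_N)$ over a recent window $[\underline t_N,t_N]$ with $\underline t_N=t_N-w_Nf(N)$ identifies only the law of the minimal load $L$ accumulated along the potential ancestral lines over that window. Since $\eta(i,t_N)=\min_v\{\eta(v)+d_{\mathcal M}(v,(i,t_N))\}$ over potential ancestors $v$ at time $\underline t_N$, this gives for free only the lower bound $\eta(i,t_N)\ge K_N^\ast(\underline t_N)+L$. The missing (and hardest) step is to show that the set of \emph{minimal-load} potential ancestors at time $\underline t_N$ contains, with high probability, an individual of the then-best type $K_N^\ast(\underline t_N)$; this is the backward--forward ``handshake'' that the paper carries out in Lemma \ref{bestinASG2} and Proposition \ref{bigset}, resting on the quick-merging Proposition \ref{lem_merging1} and on the minimum-load ASG coming from the far future --- and it is precisely here that the $f(N)\ln N$ in \eqref{condtN} is used, not merely to ``resolve the Yule-tree structure''. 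Two further points: your assertion that by Theorem \ref{thmclicks} ``no click has occurred before $t_N$'' is false, since \eqref{condtN} imposes no upper bound on $t_N$ (it may far exceed $f(N)\theta_N$); what is needed and true is that no click occurs in the short window $[\underline t_N,t_N]$. And the difficulty you single out --- uniform control of $\sum_{k>K}X_k^{(N)}(t_N)$ --- is not required for coordinatewise convergence in probability; what is required instead is asymptotic independence of the ASGs of two sampled individuals on the $w_Nf(N)$ scale (Lemma \ref{lem_indep}), which feeds a second-moment argument showing that the variance of $X_k^{(N)}(t_N)$ vanishes.
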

\begin{rem} a)  Eq.~\eqref{systeq} characterizes the type frequency profile $(p_k)_{k\in \N_0}$ as the fixed point of a deterministic mutation-selection equilibrium, with the out-flux due to mutation on its right hand side and the in-flux due to selection on its left hand side. The latter can be  written as $p_k \alpha \sum_{k'\in \N_0} \varphi(k'-k)p_{k'}$ with $\varphi$ as in \eqref{defphi}.   If  $\varphi$ would be replaced by $\Phi(k'-k) := k'-k$, then (cf. \cite{H78})  the solution of \eqref{systeq} would be the Poisson weights with parameter $ \mu / \alpha$. 
\\
b) An essential advantage of the form \eqref{defphi} of the fitness function  is that it opens the way to a mathematically tractable analysis of the probabilistic system via a dual process within a graphical representation. The latter will be developed in Sections \ref{GRep} and \ref{PAP}, and the dual process will appear as a hierarchy of competing   logistic populations in Sections \ref{PercASG} and \ref{processZ}.  As an appetizer and short preview, let us point to the duality between the {\em size of the initially best class,} 
$$Y_0^{(N)} := N\xi_0^{(N)}$$ 
and the process $Z_0^{(N)}$ whose jump rates are given by \eqref{Z0rates}. According to \eqref{mut}-\eqref{coal}, the process $Y_0^{(N)}$ has the jump rates
\begin{eqnarray}\begin{split}\label{Y0rates}
&n \to n+1 \quad \text{ at rate} \quad n\left(\frac 12+s\right)\left(1-\frac {n}N\right), \\
&n \to n-1 \quad \text{at rate} \quad n\left(\frac 12\left(1-\frac {n}N\right)+m_N \right).
\end{split}
\end{eqnarray}
An inspection of \eqref{Y0rates} and \eqref{Z0rates} shows that $Y_0^{(N)}$ and $Z_0^{(N)}$ are in {\em hypergeometric duality}  \cite[Example 4.7]{Jansen2014duality}. In more probabilistic terms, $Y_0^{(N)}$ is a two-type Moran counting process with directional selection and one-way mutation, and $Z_0^{(N)}$ counts the number of potential ancestors in a {\em decorated } ancestral selection graph whose branches split at rate $s_N$, coalesce at rate $\frac 1N$ per pair, and are pruned at rate $m_N$ (\cite{KN97, baake2018probabilistic}). Indeed, the rate figuring in Theorem~\ref{thmclicks} appears also in Lemma~\ref{exp_rate} as the asymptotic rate of extinction of $Z_0^{(N)}$.
This constitutes the ground level ($k=0$) in a duality between the type frequency process $(N\xi_k^{(N)})_{k\in \N}$ and a ``hierarchy of logistic competitions'' $(Z_k^{(N)})_{k\in \N}$. The latter will be introduced in  Section \ref{processZ}, and the duality  will appear in the ``grand graphical picture'' developed in Sections \ref{GRep}-\ref{PercASG}.  
\end{rem}

We end this section by a description of the proof strategy of Theorems \ref{thmclicks} and \ref{thmprofile}, with some of  the ideas illustrated by Figure \ref{fig:boat4}.

\begin{figure}[h!]
\begin{center}
  \includegraphics[width=.5\linewidth]{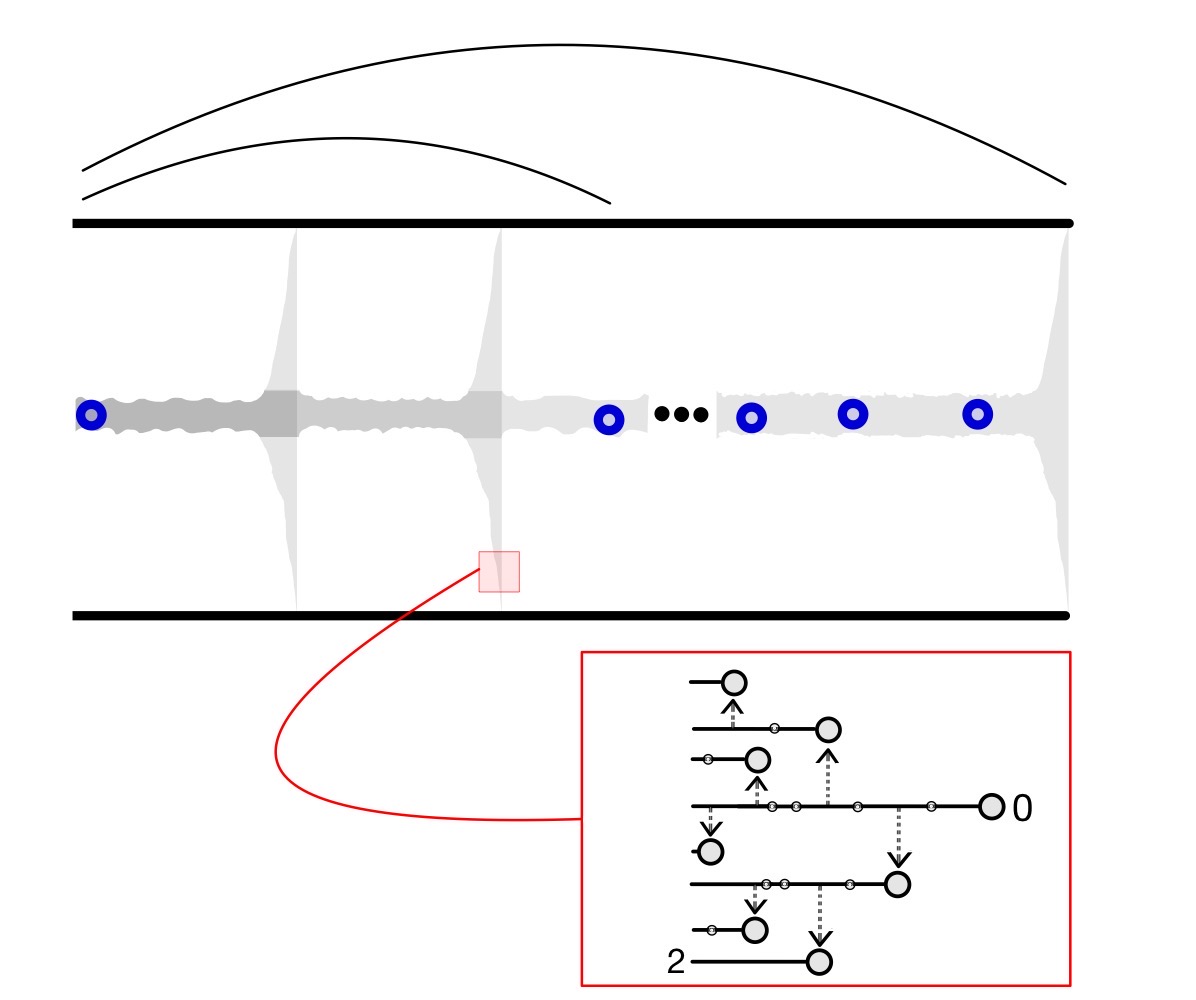}
  \end{center}
  \caption{This cartoon reflects the strategy for proving  Theorems~\ref{thmclicks} and~\ref{thmprofile}. The time arrow for the evolution of the Ancestral Selection Graph points from right to left. The ``minimal load ASG'' is symbolized by the grey band amidst the total population that is confined by the two solid horizontal lines.
   ASGs started from the entire population merge on the scale $f(N)\ln N$  into  the minimum load ASG. 
  Along the latter, clicks (depicted by thick circles) happen asymptotically at (the much slower) scale $\theta_N f(N)$, and these can be coupled locally on that scale with the clicks forward in time.
  Most of the variability on the number of mutations among contemporaneous individuals comes from mutations acquired in their recent past. These recent mutations can be studied via duality by means of a Yule process approximation on the scale $f(N)$, leading to the quasi-stationary type frequency profile $(p_k)_{k\in \N_0}$.}
  \label{fig:boat4}
\end{figure}

Think of some late time $u=u_N$ being fixed, located at the far right of Figure \ref{fig:boat4}, and denote the population living at this time by $\mathfrak P_u =: \mathfrak P$. For $t < u$, the Ancestral Selection Graph (ASG) $\mathscr A_t= \mathscr A_t^{\mathfrak P}$ consists of all the {\em potential ancestors} of~$\mathfrak P$ that live at time $t$, see  Definition~\ref{ASGdefi}.  Each individual in $\mathscr A_t$  has a certain ``mutational distance'' (\mbox{$\mathcal M$-distance}, see Definition \ref{defi43}) from $\mathfrak P$. Those
 individuals in $\mathscr A_t$ that  have the smallest $\mathcal M$-distance from $\mathfrak P$ among their contemporaneans, make up the {\em minumum load ASG} $\bar {\mathscr A}_t$, see~Definition \ref{colouredASG}.

\begin{itemize}
\item  Every once in a while, as $t$ decreases (i.e. wanders to the left in Figure \ref{fig:boat4})  the $\mathcal M$-distance between $\bar {\mathscr A}_t$ and $\mathfrak P$  increases by one. These  {\em backward click times} (see Definition~\ref{backwardclicks} and Proposition \ref{thm48}) turn out to be asymptotically  Poisson as $N\to \infty$, with the rate appearing in Theorem \ref{thmclicks}. A key step in proving this is the insight that,  between backward click times, the size of  $\bar {\mathscr A}_{u-r}$, $r \ge 0$ is an autonomous Markov process with the ``logistic'' jump rates \eqref{Z0rates}. As $N\to \infty$, the expected time to extinction out of the quasi-equilibrium of this Markov process is given by \eqref{order_time_ext} up to logarithmic equivalence, see Lemma~\ref{exp_rate}. 
\item Another key result is Proposition \ref{lem_merging1}. This ensures in particular that the ``load zero'' ASG of the entire population $\mathfrak P_T$ that lives at some time $T< u$ (i.e. that part of the ASG which has $\mathcal M$ - distance zero from $\mathfrak P_T$)  merges quickly with the minimum load ASG $\bar {\mathscr A}$.  This helps to show that backward and forward click times are asymptotically close on a suitable scale, and to complete the proof of Theorem~\ref{thmclicks} in Section \ref{secclickrates}.
\end{itemize} 

It remains to  explain our strategy of proving Theorem \ref{thmprofile}. For this, let us have a  look at the random number (say, $K$) of deleterious mutations which an individual sampled uniformly at random at time $t_N$ has acquired in its recent past, i.e. before the ASG of the sampled individual starts to exhibit coalescences and to interact with the ASG's of other individuals sampled uniformly at random at time $t_N$. 

\begin{itemize}
\item During this time span the ASG of the sampled individual looks like a Yule tree with splitting rate $s_N$. We are thus facing a question of  `first passage perco\-lation' along a Yule tree that is decorated with a Poisson point process of intensity $m_N$. This question, which is of independent interest (see Remark \ref{geome}) is the subject of Proposition \ref{pi_eqals_p}. Here, the probability weights $(p_k)_{k \in \N_0}$ appear in a natural manner as the distribution of the above described random variable $K$.
\item  The link to identify them as the quasi-stationary type frequency profile appearing in Theorem \ref{thmprofile} is the insight that the ASG of the sampled individual, as soon as it has become large, contains, with high probability as $N\to \infty$, an individual of the currently best class. This handshake between the backward and the forward point of view is carried out in Section \ref{YuleinASG}, thus completing the proof of the main part of Theorem \ref{thmprofile}.
\end{itemize}

\begin{rem} Another route of proving Theorem \ref{thmprofile}a), which we only sketch briefly here, builds on the time reversibility of the so called equilibrium Ancestral Selection Graph which was discovered by Pokalyuk and Pfaffelhuber in~\cite{PP13}. This carries over to the time reversibility of the equilibrium ASG decorated with the Poisson process of mutations.  As described in Section  \ref{processZ}, the (joint) center of attraction of the sizes of the load $k$-ASG's is asymptotically given by $(2 N s_N p_k)_{k\in \N_0}$. By time reversibility, this backward-in-time concept turns into a statistics of the $\mathcal M$-distances between the equilibrium ASG's at times $0$ and $t_N$, showing that the type frequency profile {\em within the equilibrium ASG at time $t_N$} is asymptoticallly given by $(p_k)_{k\in \N_0}$. This,  however, is representative for the entire population at time $t_N$, since the equilibrium ASG at time $t_N$ is a random sample that is ``measurable from the future''.
\end{rem}

\section{Graphical representation of the model}\label{GRep}
The type frequency process $\xi^{(N)}$ of the $\varphi$-ratchet, which was introduced at the beginning of Section \ref{MMR}, can be constructed (in a similar way as in \cite{casanova2018duality,CHS19,GS}) on top of a Moran graph with selection parameter $s_N$, with mutations added  by means of
an independent Poisson process. 
\begin{defi}[Graphical elements]\label{Graphel} For fixed $N \in \mathbb N$, we consider three independent Poisson point processes, $\mathcal{C}^{(N)},$ $\mathcal{S}^{(N)}$ and $\mathcal{M}^{(N)}$. The processes $\mathcal C^{(N)}$ and $\mathcal S^{(N)}$ 
are supported by $ \{(i,j): i, j \in [N], i\neq j\} \times \mathbb R$ and have on each component $\{(i,j) \}\times \mathbb R$  the constant intensity $\frac1{2N}$ 
and $\frac{s_N}N$, respectively. The third process, $\mathcal 
M^{(N)}$, is a Poisson point process on $G^{(N)}:=[N]\times \mathbb R$  with constant intensity $m_N$ on each component $\{i\}\times \mathbb R$. Here and below we use the abbreviation $[N] := \{1,2,\ldots, N\}$
\end{defi}
\begin{rem}
When there is no risk of confusion, we will suppress the index $N$ and write $G, \mathcal C, \mathcal S, \mathcal M$. We will speak of the points $(i,t) \in G $, $i \in [N]$, as {\em the individuals living at time $t$}. Each point $(i,j,t) \in \mathcal C \cup S$ can be visualized as an arrow pointing from line~$i$ to line $j$ at time~$t$. At an $(i,j,t) \in \mathcal C$, the individual $(i,t)$, irrespective of its type, bears a daughter $(j,t)$ who replaces the individual $(j,t-)$. At an $(i,j,t) \in \mathcal S$, the same happens, but only provided the individual $(i,t)$ carries less mutations than the individual  $(j,t-)$. 
 The process $\mathcal M$ describes the mutations occurring along the lines; each point of $\mathcal M$ increases the mutational load along the lineage by 1. This is made precise in Definition \ref{typetransport}, and illustrated in Figure \ref{fig:boat1}.
\end{rem}
\begin{figure}[h!]
\begin{center}
  \includegraphics[width=.5\linewidth]{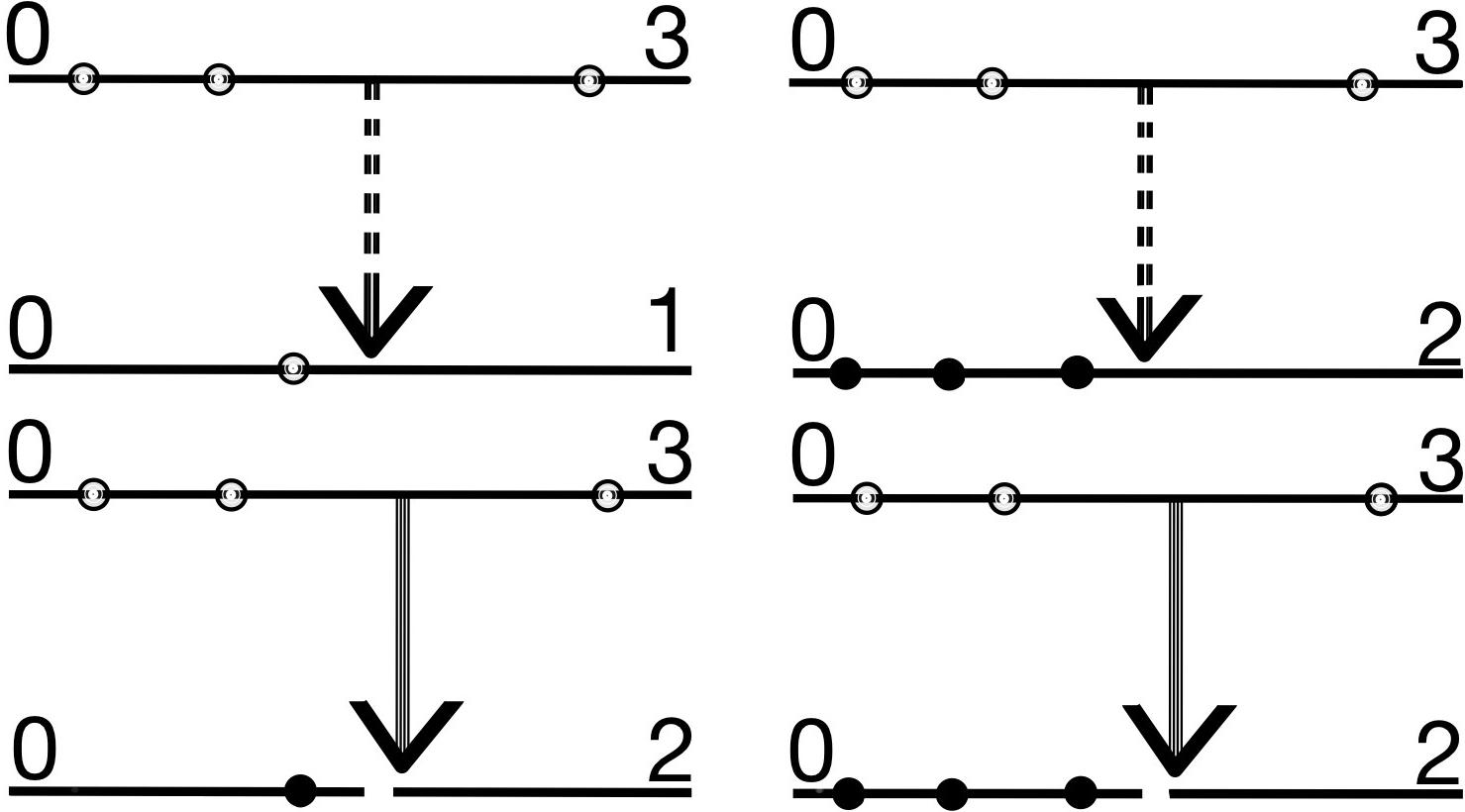}
  \end{center}
  \caption{Graphical elements and their impact on the transport of types. Time is running from left to right, and in each of the four panels two levels ($i=1,2$) are considered, with the initial type configuration $(0,0)$.  Selective and neutral arrows are drawn with dashed and solid shafts, respectively. Mutations are drawn as circles. Because of the rules described in Definition \ref{typetransport}, some of the mutations do not have an effect on the outcome of the types at the final time; these mutations are represented as filled black circles.}
  \label{fig:boat1}
\end{figure}
\begin{defi}[Type configurations and their transport]\label{typetransport}
A {\em type configuration} is an element of ${\mathbb N_0}^{[N]}$, thus assigning a (nonnegative integer) {\em type} to each $i\in [N]$. The process $(\mathcal C, \mathcal S, \mathcal M)$ gives rise to a transport of {\em type configurations} $(\eta(i,t))_{i\in [N]}$ as $t\in \mathbb R$ increases. Specifically, think of an initial type configuration $(\eta(i,t_0))_{i\in [N]}$ being given at some  time $t_0$. At times $t>t_0$, the three Poisson point processes $\mathcal C, \mathcal S$ and $\mathcal M$ act as follows:
\begin{itemize}
\item if the point  $(i,j,t)$ belongs to $\mathcal{C}$, then 
$$\eta(j,t)=\eta(i,t-) \ \  (=\eta(i,t) \ a.s.)$$
\item if the point  $(i,j,t)$ belongs to $\mathcal{S}$, then
$$ \eta(j,t)= \left\{ \begin{array}{lll}
\eta(j,t-) & \text{if} & \eta(j,t-)\leq\eta(i,t-)\\
\eta(i,t-) & \text{if} & \eta(i,t-)<\eta(j,t-)
\end{array}  \right. $$ 
\item if the point $(i,t)$ belongs to $\mathcal{M}$, then
$$\eta(i,t)=\eta(i,t-)+1\, .$$
\end{itemize}
\end{defi}

In the next section and thereafter we will use the Poisson point processes $(\mathcal C, \mathcal S, \mathcal M)$ also for a transport (of potential ancestral paths and mutational loads) {\em backwards in time}. In order to clearly distinguish between forward and backward concepts, we define two filtrations generated by $(\mathcal C, \mathcal S, \mathcal M)$.
\begin{defi}[Forward and backward filtrations]\label{filt}
For $t \in \R$ let $\mathcal C_{\le t}$ and $\mathcal S_{\le t}$  be the restrictions of $\mathcal C$ and $\mathcal S$ to $\bigcup\limits_{i, j \in [N], i\neq j} \{(i,j) \}\times (-\infty, t]$, and let $\mathcal M_{\le t}$ be the restriction of $\mathcal M$ to $\bigcup\limits_{i \in [N]} \{i \}\times (-\infty, t]$. Likewise, define $\mathcal C_{\ge t}$, $\mathcal S_{\ge t}$, $\mathcal M_{\ge t}$, replacing $(-\infty,t]$ by $[t,\infty)$. Let
 $\mathscr F_t$ and $\mathscr P_t$ be the $\sigma$-algebras generated by $\mathcal C_{\le t}, \mathcal S_{\le t}, \mathcal M_{\le t}$ and $\mathcal C_{\ge t}, \mathcal S_{\ge t}, \mathcal M_{\ge t}$, respectively. The {\em forward filtration} is $\mathscr F:= \left( \mathscr F_t\right)_{t\ge 0}$ and the {\em backward filtration} is $\mathscr P:= \left( \mathscr P_t\right)_{t\in \R}$; note that  $\mathscr P_t$ increases as $t$ decreases. A random time $T$ is called a $\mathscr P$-stopping time if $\{T\ge t\} \in \mathscr P_t$ for all $t \in \mathbb R$. The $\sigma$-algebra $\mathscr P_T$ consists of all those sets $E \subset \sigma\left(\bigcup\limits_{t\in \mathbb R} \mathscr P_t\right)$ for which $E \cap \{T \ge t\} \in \mathscr P_t$ for all $t \in \mathbb R$. All these objects are understood for fixed population size $N$; sometimes we will write $\mathscr P^{(N)}$ and  $\mathscr F^{(N)}$ instead of $\mathscr P$ and  $\mathscr F$, to make the dependence on $N$ explicit.
\end{defi}
\begin{rem}  With $t_0:=0$ and $\eta(i,0) :=0$, $i \in [N]$, and with $\eta(.,t)$, $t\ge 0$,  constructed according to Definition \ref{typetransport}, the  process of type frequency evolutions  that figures in Theorems~\ref{thmclicks} and \ref{thmprofile} can now be represented as
the $\mathscr F$-adapted process
\begin{equation}\label{defxi}
 \xi_\kappa(t):= \frac{1}{N}  \# \left\{ i \in [N]: \eta(i,t)=\kappa  \right\}, \, \kappa \in \N_0,  \quad t\ge 0. 
 \end{equation}
Indeed this process has the jump rates given in \eqref{mut}, \eqref{sel}, \eqref{coal}. In terms of $\eta$, the best type at time $t$ (defined in \eqref{besttypexi}) has the representation
\begin{equation}\label{repKN}
K_N^\ast(t) = \min \{\eta(j,t): j\in [N]\}.
\end{equation}
\end{rem}

\section{Potential ancestral paths and their loads}\label{PAP}
While the graphical representation given in the previous section was a {\em forward in time} construction, we now take a {\em backward in time} point of view. This is based on the concept of {\em potential ancestral lineages} which goes back to the pioneering work of  Krone and \mbox{Neuhauser~\cite{KN97,neuhauser1997genealogy}}. The key idea is to construct in a first stage an untyped version of the (potential) genealogy backwards in time and decide in a second stage forwards in time which lineages become ``real''.  Specifically, a ``selective arrow'' $(i,j,t) \in \mathcal S$ introduces the two potential parents $(i,t-)$ and $(j,t-)$ of the individual $(j,t)$. Thus, a potential ancestral lineage backwards in time should jump from $(j,t)$ to $(i,t-)$ as soon as it ecounters the head $j$ of a ``neutral arrow'' $(i,j,t)\in \mathcal C$, and should branch into two selective lineages as soon as it ecounters the head $j$ of a ``selective arrow'' $(i,j,t)\in \mathcal S$. We will formalize this by the concept of {\em (potential ancestral) paths}. 
\begin{defi}[Paths and potential ancestors]\label{pathsdefi}  Let $(i,t_0),\, (j,t) \in G$ with $t_0 < t$. \\
A  {\em (potential ancestral) path} connecting $(i,t_0)$ and $(j,t)$ is a subset of $G$ of the form
$$(\{i_1\} \times [t_0,t_1)) \cup(\{i_2\} \times [t_1,t_2)) \cup... \cup (\{i_n\} \times [t_{n-1},t_n]),$$
with $n \in \N$ and the following properties
\begin{itemize}
\item[a)]
 $t_0 < t_1 < \cdots < t_{n-1} \le t_n=t$,
 \item[b)]
 $i=i_1, \, j= i_n$,
 \item[c)]
  $(i_g, i_{g+1}, t_g) \in \mathcal C \cup \mathcal S$ for $g=1,\ldots, n-1$,
 \item[d)]
 $\mathcal C\left(\left([N]\setminus\{i_g\}\right)\times\{i_g\}\times\left(t_{g-1},t_g\right)\right) = 0$ for $g=1,\ldots, n-1$.
 \end{itemize}
We write $(i,t_0)\prec (j,t)$ if there is a path connecting $(i,t_0)$ and $(j,t)$. In this case we say that $(i,t_0)$ is a {\em potential ancestor} of $(j,t)$.
 \end{defi}
In words, the conditions mean that jumps between different levels $h,h' \in [N]$ may only occur at time points of either neutral or selective arrows, and that none of the time intervals $\left(t_{g-1},t_g\right)$, $g=1,\ldots,n$, may be hit by a neutral arrow whose arrow-head is at $i_g$.

As a consequence of this definition  we observe:
\begin{itemize}
\item If the point $(i,j,t)$ belongs to $\mathcal{C}$, the point $(j,t)$ is disconnected with $(j,t-)$ and connected with $(i,t-)$. 
\item  If the point $(i,j,t)$ belongs to $\mathcal{S}$, the point $(j,t)$ is connected both with $(i,t-)$ and~$(j,t-)$.
 \end{itemize}

 \begin{defi}[Ancestral selection graph (ASG)]\label{ASGdefi}
 For $t \in \mathbb R$ and $J_t \subset [N] \times \{t\}$ we define, suppressing the index $N$, for $r\ge 0$,
$$\mathscr A^{J_t}_{t-r} :=  \{(i,t-r) :  (i,t-r) \prec v \mbox{ for some } v \in J_t\} \quad
\text{and} \quad \mathscr A^{J_t}:= \bigcup_{r\ge 0}\mathscr A^{J_t}_{t-r}.$$
Thinking of  $\mathscr A^{J_t}$ as a union of paths jointly with the graphical elements from $\mathcal C$ and $\mathcal S$ by which it was induced,
we call $\mathscr A^{J_t}$ the {\em ASG back from} $J_t$. \\
For a singleton $J_t = \{(j,t)\}$ we write $\mathscr A^{j,t}_{t-r}$ instead of $\mathscr A^{\{(j,t)\}}_{t-r}$, and for $J_t = [N]\times \{t\}$ we briefly write $\mathscr A^t_{t-r}$ instead of $\mathscr A^{[N]\times \{t\}}_{t-r}$, and $\mathscr A^t$ instead of $\mathscr A^{[N]\times \{t\}}$. 
\end{defi}
\begin{defi}[Load and $\mathcal M$-distance] \label{defi43}
(i) The {\em load} of a path is the number of points of~$\mathcal{M}$ carried by the path.
\\ (ii)
The {\em $\mathcal{M}$-distance} $d_\mathcal M((i,t_0),(j,t))$ of two points $(i,t_0)$, $(j,t) \in G$ with $t_0<t$ is the minimal load of all paths connecting them, with the convention that the minimum over an empty set is infinity.  We say that $(i,t_0)$ is a {\em load $k$ potential ancestor} of $(j,t)$ if $d_{\mathcal M}((i,t_0), (j,t))= k$.\\
(iii) For  $t_0 < t$ and $I_{t_0} \subset [N] \times \{t_0\}$, $J_t \subset [N] \times \{t\}$ we put
$$d_{\mathcal M}(I_{t_0}, J_t) := \min\{d_{\mathcal M}(v, w): v \in I_{t_0}, w \in J_t\}.$$
\end{defi}
\begin{rem} For any three points $(i,t_0), (j,t), (g,u) \in G$ with $t_0< t < u$ one has
$$d_{\mathcal M}((i,t_0), (g,u)) \le d_{\mathcal M}((i,t_0), (j,t))  + d_{\mathcal M}((j,t), (g,u)).$$
To see this we may assume  w.l.o.g. that  $(i,t_0) \prec (j,t)$ and $(j,t) \prec (g,u)$ (since otherwise the r.h.s. of the inequality is infinite). Then the concatenation of a path of minimal load connecting $(i,t_0)$ and $(j,t)$ with a path   of minimal load connecting $(j,t)$ and $(g,u)$ is a path connecting $(i,t_0)$ and $(g,u)$; hence $d_{\mathcal M}$ obeys the claimed triangle inequality.
\end{rem}
\begin{figure}[h!]
\begin{center}
  \includegraphics[width=.5\linewidth]{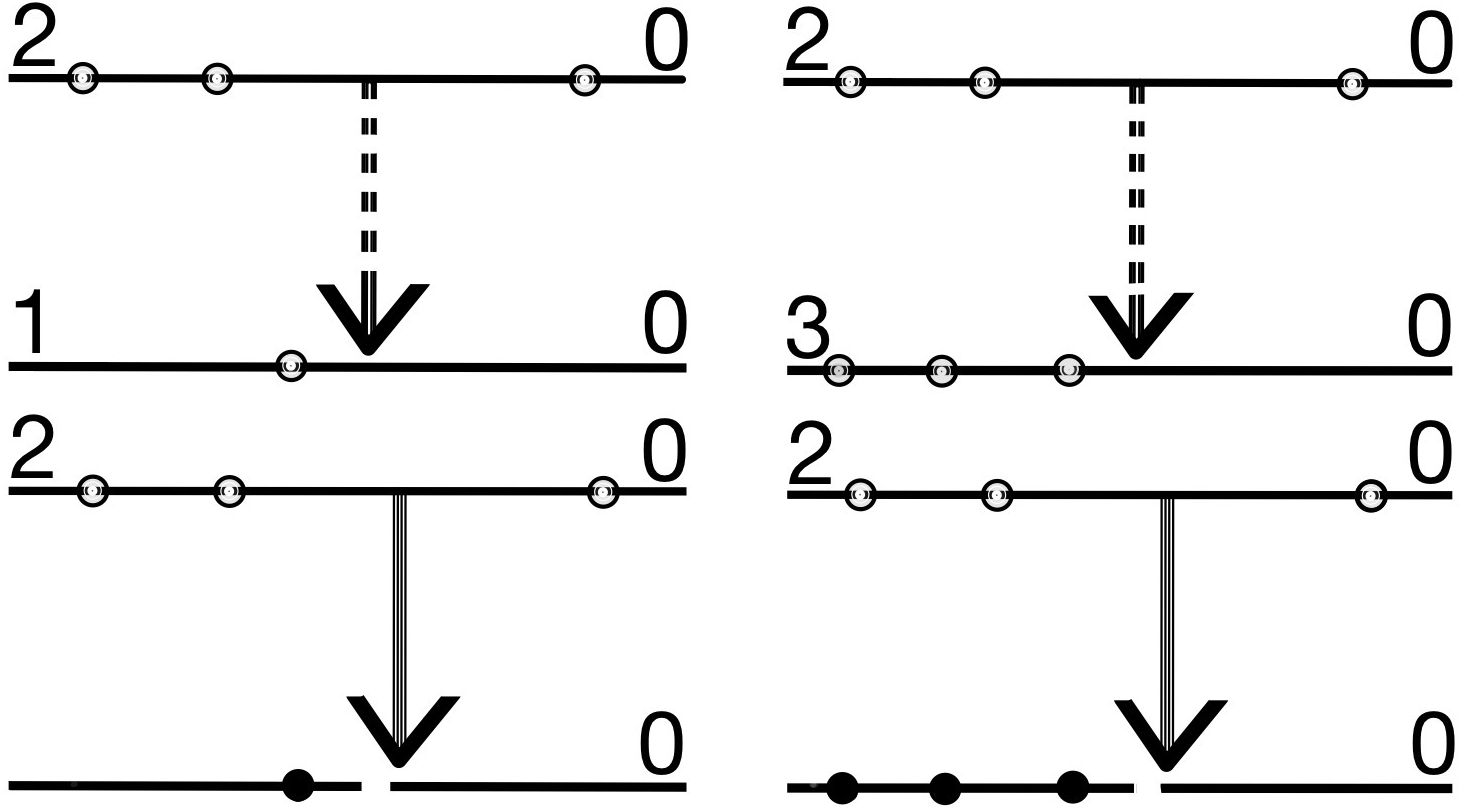}
  \end{center}
  \caption{This figure contains the same graphical elements than Figure \ref{fig:boat1}, but now the paths are followed backwards. Let us think of the left hand side of each of the four panels corresponding to time $t_0$ and the right hand side corresponding to $t>t_0$. At time $t_0$, the $\mathcal M$-distance between the  set $\{1,2\}\times \{t\}$ and its potential ancestors at time $s$ is annotated. A comparison with Figure~\ref{fig:boat1} shows differences and similarities between the backward and forward transport. }
  \label{fig:boat2}
\end{figure}
The graphical construction entails that, for $t_0 < t$, the type configuration $\eta(.,t)$ is a function of  $\eta(.,t_0)$ and the graphical elements between times $t_0$ and $t$: types are transported forward in time, and whenever there is a ``selective encounter'' between two ancestral paths of $(j,t)$, the  ``better type'' is passed on. Specifically, this results in 
\begin{rem} [Flow of type configurations] Let $\eta = \eta^{(N)}$ be as specified in Definition \ref{typetransport}. Then for any $j \in [N]$, $0\le t_0 <t$, one has a.s.
$$\eta(j,t)= \min_{i \in [N]} \left\{ \eta(i,t_0)+d_{\mathcal M}((i,t_0),(j,t)) \right\}.  $$
In particular, if $\eta(i,0) =0$ for all $i \in [N]$, then 

\begin{equation}\label{etadist}
\eta(j,t) = d_{\mathcal M}([N] \times \{0\}, (j,t)) =  d_{\mathcal M}(\mathscr A^{j,t}_0, (j,t))
\end{equation}
and
\begin{equation} \label{besttype}
K_N^\ast(t) = \min \{\eta(j,t): j\in [N]\} = d_{\mathcal M}\left([N] \times \{0\}, [N] \times \{t\}\right).
\end{equation}
\end{rem}
\begin{defi} \label{colouredASG} 
a) {\rm (Load $k$ potential ancestors)} Let $k \in \mathbb N_0$, $r\geq 0$ and $t\in \mathbb R$. For $J_t\subset [N]\times \{t\}$, 
$\mathscr{A}^{J_t}_{t-r}(k)$ is the set of individuals $(i,t-r)$  which are load $k$ potential ancestors  of some individual in $J_t$ (cf. Definition \ref{defi43}).
Taking the union over $r \ge 0$ we define
$\mathscr A^{J_t}(k)$ as the set of all individuals which are load $k$  potential ancestors of some individual \mbox{in $J_t$.}

b) {\rm Minimum load potential ancestors}. We define the  set of {\em minimum load potential ancestors} at time $t-r$ of the population $J_t$  as 
\begin{equation} \label{min_load_anc} \bar{\mathscr A}^{J_t}_{t-r}:= \mathscr{A}^{J_t}_{t-r}(\underline k)
\end{equation}
where 
$$ \underline k := \underline k(t-r,t) := \min\{k\in \N_0:  \mathscr{A}^{J_t}_{t-r}(k)\neq \emptyset\}.$$
To ease notation we write $\bar{\mathscr A}^t_{t-r}$ instead of  $\bar{\mathscr A}^{[N]\times\{t\}}_{t-r}$, and $\bar{\mathscr A}^{i,t}_{t-r}$ instead of $\bar{\mathscr A}^{\{(i,t)\}}_{t-r}$.

c) The definitions in a) and b) extend directly from deterministic $t$ and $J_t$ to a $\mathscr P$-stopping time $T$ and a $\mathscr P_T$-measurable random set $\mathscr J_T \subset [N]\times \{T\}$.
\end{defi}
\section{Percolation of loads along the Ancestral Selection Graph} \label{PercASG}
In this section we fix a population size $N \in \mathbb N$ which we suppress in the notation. For a $\mathscr P$-stopping time $T$ and a $\mathscr P_T$-measurable set $\mathscr J_T \subset [N]\times \{T\}$  (cf. Definition \ref{filt}), the joint dynamics of the set-valued processes $\big(\mathscr{A}^{\mathscr J_T}_{T-r}(k)\big)_{r\ge 0}$, $k \in \N_0$, as specified in Definition~\ref{colouredASG}, is driven in a $\mathscr P$-adapted manner by the Poisson point processes $(\mathcal{C},\mathcal{S},\mathcal{M})$ introduced in Definition \ref{Graphel}. With regard to Definitons \ref{pathsdefi} and \ref{ASGdefi} we will now describe  the actions of $(\mathcal{C},\mathcal{S},\mathcal{M})$ on the sets $\mathscr{A}^{J_t}_{t-r}(k)$  (note the analogy and the differences to Definition \ref{typetransport} for the transport of type configurations,  which there was forward in time). For $r\ge0$ and $ k\in \N_0$ we define $H$, $H_k \subset [N]$ by
$$\mathscr{A}^{\mathscr J_T}_{T-r}= H\times \{T-r\} \quad \text{and} \quad \mathscr{A}^{\mathscr J_T}_{T-r}(k) = H_k\times \{T-r\}.
$$
\begin{itemize}
\item[$\bullet$] {\em Coalescences:} Let $(i,j,T-r)\in \mathcal{C}$. 
If $i \in H_k$ and $j \in H_{k'}$, then for $k\le k'$
$$H_k \mbox{ remains unchanged and  } j \mbox{ is removed from } H_{k'}$$
whereas for $k > k'$
$$i \mbox{ changes from } H_k \mbox{ to } H_{k'}, \, \mbox{ and } j \mbox{ is removed from } H_{k'}.$$
\item[$\bullet$] {\em Selective branching:} Let $(i,j,T-r)\in \mathcal{S}$.
\\ If $j \in H_k$ and $i \notin H$, then \,$i \mbox{ becomes an element of } H_k.$
 \item[$\bullet$] {\em Selective competition:} Let $(i,j,T-r)\in \mathcal{S}$. 
 If $i \in H_k$ and $j \in H_{k'}$, then for $k\le k'$
 $$j \mbox{ changes from } H_{k'} \mbox{ to } H_k$$
whereas for $k > k'$
$$i \mbox{ changes from } H_k \mbox{ to } H_{k'}.$$
 \item[$\bullet$] {\em Mutation:} Let $(i,T-r)\in \mathcal{M}$. 
If  $i \in H_k$ then
$i \mbox{ changes from } H_k \mbox{ to } H_{k+1}.$
\end{itemize}
Due to the symmetry properties of the dynamics (backwards in time) that is induced by the just described transitions, we may focus our attention on the configuration of cardinalities of the sets $\mathscr{A}^{\mathscr J_t}_{T-r}(k)$, and define
$$ {A}^{\mathscr J_T}_{T-r}(k) := \# \mathscr{A}^{\mathscr J_T}_{T-r}(k), \quad k \in \N_0. $$
For $\mathscr J_T = [N]\times \{T\}$ we write $ A^T_{T-r}(k)$ instead of $ A^{[N]\times \{T\}}_{T-r}(k)$, and $ A^T(k)$ instead of $ A^{[N]\times \{T\}}(k)$.

The following lemma is a consequence of the above described actions of the Poisson point processes $(\mathcal C, \mathcal S, \mathcal M)$  on the sets $\mathscr{A}^{\mathscr J_T}_{T-r}(k)$. 

\begin{lem}\label{fromAtoZ} For $T$ and $\mathscr J_T$ as in Definition~\ref{colouredASG} c), the process 
$$
(A^{\mathscr J_T}_{T-r}(0), A^{\mathscr J_T}_{T-r}(1), \ldots, A^{\mathscr J_T}_{T-r}(k),...)_{r\ge 0}$$
 is Markovian when randomized over $(\mathcal{C},\mathcal{S},\mathcal{M})$. Its state space is the set
 \begin{equation}\label{defZN}
 \mathscr Z_N := \{z=(z_0,z_1, \ldots, ): z_k \in \mathbb N_0,  z_0+z_1+ \cdots \le N\}.
 \end{equation}
Its jump rates from $z\in \mathscr Z_N$ are (with $e_k$ as in Section \ref{MMR} and $s_N$, $m_N$ as in \eqref{moderatesm})
\begin{itemize}
\item[$\bullet$] Coalescences: for any $k \in \mathbb N_0$, 
\begin{equation}\label{ratescoal}z \to z -e_{k} \quad \text{with rate} \quad \frac 1{2N} z_k (z_k -1)+ \frac 1N  z_k \sum_{0\le k'<k}z_{k'}.
\end{equation}
\item[$\bullet$] Selective branching: for any $k \in \mathbb N_0$,
\begin{equation} \label{ratesselbranch} z \to z + e_k \quad \text{with rate} \quad \frac{s_N}N z_k \left(N-\sum_{0\le k'<\infty} z_{k'}\right). 
\end{equation}
\item[$\bullet$] Selective competition:  for any pair of integers $(k,k')$ such that $0\le k < k'$,
\begin{equation} \label{ratesselcomp} z\to z + e_k-e_{k'} \quad \text{with rate} \quad \frac{s_N}N z_kz_{k'}. 
\end{equation}
\item[$\bullet$] Mutation: for any $k\in \mathbb N_0$,
\begin{equation*}z \to z + e_{k+1}-e_{k} \quad \text{with rate} \quad m_N  z_k 
\end{equation*}
 \hspace{1.6cm} and for any $k \in \mathbb N$,
\begin{equation}\label{ratesmutfeeding}z \to z + e_{k}-e_{k-1} \quad \text{with rate} \quad m_N  z_{k-1}. 
\end{equation}
\end{itemize} 
\end{lem}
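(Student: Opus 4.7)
The plan is to prove Lemma \ref{fromAtoZ} in three stages. First, I will establish the Markov property of the set-valued multi-load ASG $(\mathscr A^{\mathscr J_T}_{T-r}(k))_{k\in\N_0, r\ge 0}$ with respect to the backward filtration $\mathscr P$. Second, using label-exchangeability of the driving Poisson point processes $\mathcal C, \mathcal S, \mathcal M$, I will reduce to the cardinality profile $(A^{\mathscr J_T}_{T-r}(k))_k$ on the state space $\mathscr Z_N$. Third, I will compute each jump rate by enumerating the ordered pairs (or single labels) that trigger the prescribed transition, weighted by the corresponding per-object intensity.

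For the first stage, observe that $(\mathscr A^{\mathscr J_T}_{T-r}(k))_k$ is $\mathscr P_{T-r}$-measurable, being determined by $\mathscr J_T$ (which is $\mathscr P_T$-measurable by hypothesis) together with the restrictions of $\mathcal C, \mathcal S, \mathcal M$ to the slab $[T-r, T]$. The evolution for larger $r$ is governed, via the four action-rules listed just before the lemma, by the Poisson points strictly before time $T-r$, which are independent of $\mathscr P_{T-r}$ by the independent-increments property of Poisson point processes. A standard strong-Markov argument then extends this from deterministic $T-r$ to general $\mathscr P$-stopping times $T$ with $\mathscr P_T$-measurable random subsets $\mathscr J_T$. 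For the second stage, because these intensities are homogeneous in the labels $[N]$ and the action-rules are label-symmetric, all transition rates from a given configuration $(H_k)_k$ are symmetric functions of the labels and depend on this configuration only through the cardinalities $z_k=|H_k|$; Markovianity of $(A^{\mathscr J_T}_{T-r}(\cdot))_{r\ge 0}$ on $\mathscr Z_N$ then follows.

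For the third stage, each rate \eqref{ratescoal}--\eqref{ratesmutfeeding} is obtained by counting compatible ordered pairs (for $\mathcal C$- and $\mathcal S$-arrows, at the per-pair intensities $\frac{1}{2N}$ and $\frac{s_N}{N}$) or individual labels (for $\mathcal M$-events, at intensity $m_N$). For the coalescence rate toward $z - e_\ell$, I sum three contributions: $z_\ell(z_\ell-1)$ ordered pairs with both endpoints in $H_\ell$ (head removed); $z_\ell\sum_{k<\ell} z_k$ pairs with tail in a lighter $H_k$ and head in $H_\ell$ (head removed); and $z_\ell\sum_{k'<\ell} z_{k'}$ pairs with tail in $H_\ell$ and head in a lighter $H_{k'}$ (tail relocated to $H_{k'}$ and head removed, with net effect $-e_\ell$). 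Multiplying by $\frac{1}{2N}$ reproduces \eqref{ratescoal}. The selective branching, selective competition and mutation rates follow by analogous enumerations using the respective bullet-point rules. The main technical hurdle is the patient case-by-case bookkeeping in this stage: one must track, for each oriented pair $(i,j)$, which endpoint moves (or is removed) according to the ordering of their loads $k$ and $k'$, avoid double-counting when both endpoints lie in the same class $H_\ell$ (where ordered pairs number $z_\ell(z_\ell-1)$, not $z_\ell^2$), and correctly reconcile the contributions from the two orientations of a given unordered pair, which in general differ.
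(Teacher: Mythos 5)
Your overall strategy coincides with what the paper intends (the paper offers no written proof beyond the remark that the lemma "is a consequence of the above described actions"), and your Markov-property argument and your three-term enumeration for the coalescence rate \eqref{ratescoal} are correct and complete. However, there is a concrete gap exactly at the point you defer to "analogous enumerations using the respective bullet-point rules": for the selective-competition rate \eqref{ratesselcomp} a literal application of the printed rule gives \emph{twice} the stated rate. Indeed, the rule as printed moves the heavier-loaded individual into the lighter class for \emph{both} orientations of the arrow (for $k\le k'$ the head $j$ is moved from $H_{k'}$ to $H_k$; for $k>k'$ the tail $i$ is moved from $H_k$ to $H_{k'}$), so for an unordered pair with loads $a<b$ both ordered arrows produce the transition $z\to z+e_a-e_b$, yielding $2z_az_b$ compatible ordered pairs and the rate $\tfrac{2s_N}{N}z_az_b$. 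Your closing sentence shows you noticed that the two orientations must be reconciled, but the reconciliation is not carried out, and the plan as stated would land on the wrong constant.

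The resolution requires going back to Definitions \ref{pathsdefi} and \ref{defi43} rather than to the bullet points. At a selective arrow $(i,j,t)\in\mathcal S$ a potential ancestral path may jump only from the tail line $i$ to the head line $j$, and the head's line is not interrupted. Hence $(i,t-)$ is a potential ancestor of both $(i,t)$ and $(j,t)$ and acquires load $\min(k,k')$, while $(j,t-)$ remains connected only to $(j,t)$ and keeps load $k'$. Consequently the configuration changes only when the \emph{tail} carries the larger load (the tail then drops to the head's class), i.e. only one of the two orientations of each unordered pair is active; this gives exactly $z_kz_{k'}$ compatible ordered pairs and the rate $\tfrac{s_N}{N}z_kz_{k'}$ of \eqref{ratesselcomp}. (A useful cross-check that this, and not the doubled rate, is correct: summing \eqref{ratesselbranch} over the empty complement case together with $\sum_{k'>0}\tfrac{s_N}{N}z_0z_{k'}$ must reproduce the upward rate $s_Nz_0(1-z_0/N)$ of \eqref{Z0rates}, which fails with the extra factor $2$.) With this correction inserted, the rest of your bookkeeping goes through.
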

\begin{figure}[h!]
 \center \includegraphics[width=.6\linewidth]{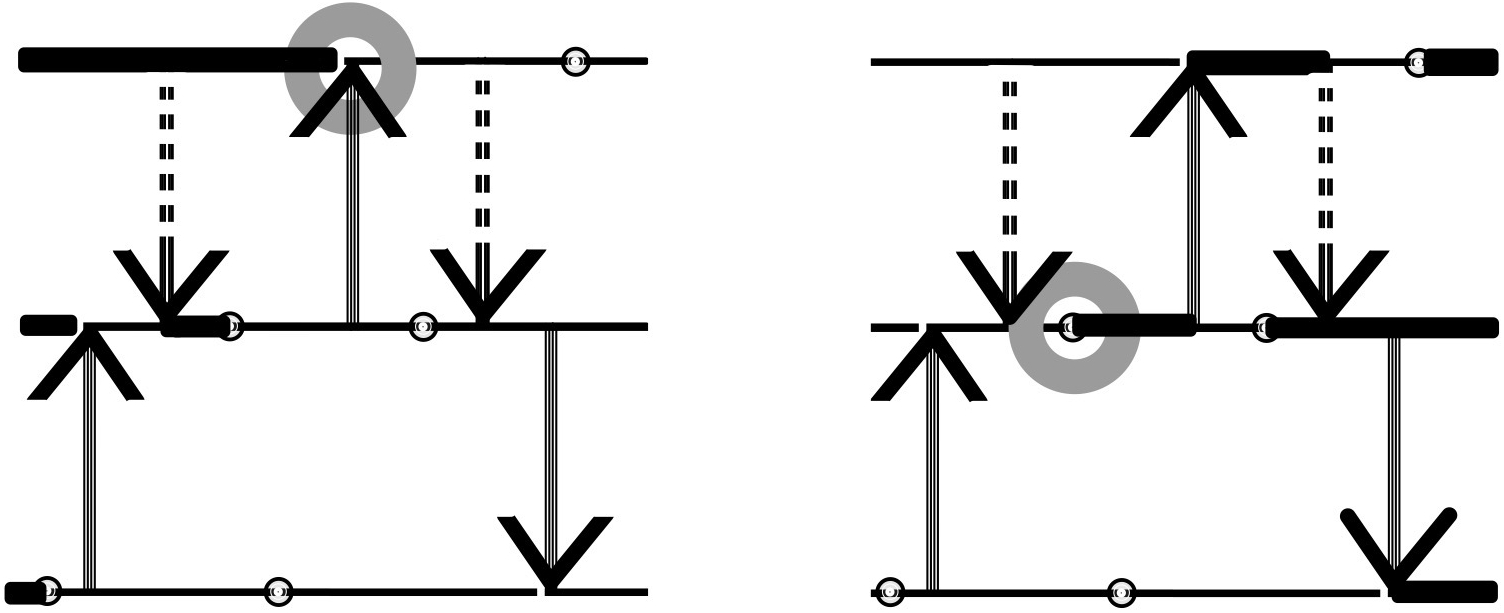}
  \caption{Key to the analysis of the click times of the ratchet are the instances at which (seen backward in time) the paths with minimal load are lost. In this figure we observe how the click times forward and backward are different, but strongly related to each other and also close in time. Both panels contain the same graphical elements, with the left panel showing the forward transport and the right panel showing the backward transport of $\mathcal M$-distances. In each case, the points with $\mathcal M$-distance 0 from the left respectively from the right boundary are shown by thick lines, and \textit{clicks} are indicated by a large circle.}
  \label{fig:boat3}
\end{figure}
\section{A hierarchy of logistic competitions} \label{processZ}
Throughout this section we consider, for any given $N \in \mathbb N$, a Markovian jump  process $Z^{(N)}:= (Z^{(N)}(r))_{r\ge 0}$, whose state space is  $\mathscr Z_N$ defined in \eqref{defZN} and whose jump rates are given by \eqref{ratescoal} to \eqref{ratesmutfeeding}. When there is no risk of confusion, we suppress the superscript $N$ and write e.g. $Z_0$ instead of $Z^{(N)}$. The following remark stems from the jump rates \eqref{ratescoal} to \eqref{ratesmutfeeding}.
\begin{rem} \label{cor_rates_01}
For each $k\in \N_0$ the process $(Z_0, \ldots, Z_{k})$ is Markovian with jump rates given by \eqref{ratescoal} to~\eqref{ratesmutfeeding}. In particular:\\
a) The process $Z_0$ is  Markovian and jumps with the rates
\begin{eqnarray}\begin{split}\label{Z0rates}
&z_0 \to z_0+1 \quad \text{ at rate} \quad s_Nz_0\left(1-\frac{z_0}{N}\right)=:z_0\,b_0(z_0,N), \\
&z_0 \to z_0-1 \quad \text{at rate} \quad z_0 \left(m_N+\frac{z_0-1}{2N} \right)=:z_0\, d_0(z_0,N) .
\end{split}
\end{eqnarray}
b) The process~$(Z_0,Z_1)$ is Markovian and jumps with the rates
\begin{eqnarray}\begin{split}
&(z_0, z_1)\to (z_0,z_1+1) \quad \text{at rate} \quad s_Nz_1\left(1-\frac{z_0+z_1}{N}\right), \\
&(z_0, z_1)\to (z_0-1,z_1+1) \quad \text{at rate} \quad m_Nz_0, \\
&(z_0,z_1)\to (z_0,z_1-1) \quad \text{at rate} \quad \frac{z_1(z_1-1)}{2N}+z_1 \left(m_N+\frac{z_0}{N} \right) \\
&(z_0, z_1)\to (z_0+1,z_1-1) \quad \text{at rate} \quad s_N\frac{z_0z_1}N.
\end{split}
\end{eqnarray}
\end{rem}
\begin{rem}\label{n_k}
a) An inspection of the rates  \eqref{ratescoal} to~\eqref{ratesmutfeeding} and an application of a dynamical law of large numbers  \cite[Theorem 11.3.2]{EK86} show that if $Z^{(N)}_0(0)$
is of order $N/f(N)$, the process 
$(({f(N)}/{N})Z^{(N)}\big(f(N)r\big))_{r \geq 0}$
 is on each time interval $[0,r_0]$  for large $N$   close (uniformly in $r \in [0,r_0]$) to the solution of the dynamical system
\begin{equation} \label{syst_dyn} \frac{dn_k(t)}{dt}= \mu n_{k-1}(t)+ n_k(t)\left( \alpha-\mu -\frac{n_k(t)}{2}- \sum_{i=0}^{k-1}n_i(t) \right), \quad k \geq 0 \end{equation}
with $n_{-1} := 0$.
Without going into all details here, let us mention that two steps are needed to prove this convergence. First, we consider a modified version of the process $Z^{(N)}$, namely $\widetilde{Z}^{(N)}$, where the rates in \eqref{ratesselbranch} and \eqref{ratesselcomp} are replaced by $s_N z_k$ and $0$ respectively. Choosing as the mass rescaling parameter the carrying capacity $N/f(N)$, we can directly apply Theorem 11.2.1 in \cite{EK86} to the process $(\widetilde{Z}^{(N)}(f(N)r))_{r \geq 0}$. Then applying Lemma C.1 in \cite{chazottes2016sharp} as in the proof of Lemma \ref{lem_coupling1}, we obtain that the sum of the components of the process $(\widetilde{Z}^{(N)}(f(N)r))_{r \geq 0}$ does not reach a size of order $N$ within a time of order $\ln N$ with a probability close to $1$ for large~$N$. The modification of the jump rates is thus negligible on a time scale of order $1$, and the claimed convergence  holds for the process $(Z^{(N)}(f(N)r))_{r \geq 0}$.\\
b) The system \eqref{syst_dyn} has a unique attracting equilibrium $(\bar{n}_k)_{k \in \N_0}$ which follows the recursion
\begin{equation}\label{recursion_n}
\bar{n}_0 := 2(\alpha-\mu)\quad {\rm and}  \quad
 \mu \bar{n}_{k-1} + \bar{n}_k \left(\alpha-\mu  - \frac{\bar{n}_k}{2}-\sum_{i=0}^{k-1}\bar{n}_i \right)=0,  \quad k \geq 1. 
\end{equation} 
For each $k \in \N_0$ the process $(Z^{(N)}_0,...,Z^{(N)}_k)$ has a {\em quasi-stationary equilibrium} (see e.g. \cite[Example 5.4]{champagnat2023general})  which we denote by  $\nu^{(k)}_N$.  
We thus obtain that for any $r\geq 0$,
$$ \lim_{N \to \infty}\E_{\nu^{(k)}_N} \left[ \frac{f(N) Z^{(N)}_k(r)}{N} \Big| Z^{(N)}_0(r)\geq 1 \right]= \bar{n}_k. $$

Summing over $k$ in \eqref{recursion_n} and defining $\displaystyle \bar n:=\sum_{k=0}^\infty \bar n_k $, we get 
\begin{align*}0&=  \alpha\bar n- \frac{1}{2} \left(\sum_{k=0}^\infty \bar n^2_k+ 2\sum_{0\le i < k < \infty} \bar n_k\bar n_i \right)= \alpha \bar n- \frac{1}{2}  \bar n^2. \end{align*}
This yields
\begin{equation}\label{nbar}
\bar{n}= \sum_{k=0}^\infty \bar{n}_k= 2\alpha. 
\end{equation}
c)
Let $(\bar n_k)_{k\in \N_0}$ be defined by the recursion~\eqref{recursion_n}. In view of \eqref{recursion_n} and \eqref{nbar} it is clear that 
$\big(\frac {\bar n_k}{2\alpha}\big)_{k \in \N_0}$ is 
 a sequence of probability weights which satisfies the recursion \eqref{recursion} and thus coincides with the probability weights $(p_k)_{k \in \N_0}$, that are defined in Theorem~\ref{thmprofile}a).
 \end{rem}
 
The next lemma roughly says that for any $k\in \mathbb N_0$ the process $Z^{(N)}_k$  with high probability grows  quickly to a size of order $N/f(N)$ and stays there at least for a time of order $f(N) \ln N$, provided only that for some $\ell \le k$ the initial size of $Z^{(N)}_\ell$ is not too small. In view of Lemma~\ref {fromAtoZ}, the quantity $N/f(N)$ thus characterizes the typical size of the ASG on the $f(N) \ln N$ - timescale.
\begin{lem}\label{lem_coupling1}
Let $(\bar{n}_k)_{k \in \N_0}$ be given by the recursion \eqref{recursion_n}.
 Let $(Z_0,Z_1,...)=(Z_0^{(N)},Z_1^{(N)},...)$ be a process with jump rates given by \eqref{ratescoal} to \eqref{ratesmutfeeding}, and let $R>0$. Then for any $k \in \N_0$ and $\eps>0$, there exist  finite constants $C_k$ and $C_k(\eps)$ such that 
\begin{eqnarray} \begin{split}\label{containment_B12}
\liminf_{N \to \infty} &\, \P \Big( \frac{f(N)}{N} Z_k(r) \in \left[ \bar{n}_k-C_k\eps,\bar{n}_k+C_k\eps\right] \mbox{ for all } r \mbox{ such that }\\ & \frac{r}{f(N) \ln N} \in [C_k(\eps),C_k(\eps)+R] \,\Big|\, \exists\,  \ell\leq k \mbox{ such that } Z_{\ell}(0)\geq 1/\eps \Big)=1-\delta(\eps),
\end{split}
\end{eqnarray}
where $\delta(\eps) \to 0$ as $\eps\to 0$.
\end{lem}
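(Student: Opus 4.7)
My plan is induction on $k$. The proof will rely on three ingredients: (i) a branching-process comparison to handle growth from microscopic size, (ii) the dynamical law of large numbers from Remark \ref{n_k}a) to control the macroscopic trajectory on the timescale $f(N)$, and (iii) classical exit-time estimates for birth--death dynamics near a stable equilibrium.

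Base case $k=0$. I would first reduce to the case $Z_0^{(N)}(0)\ge 1/\eps$ (otherwise $Z_0$ remains at zero forever, since none of the transitions \eqref{ratescoal}--\eqref{ratesmutfeeding} can lift $Z_0$ from $0$, so the $\ell=0$ branch of the hypothesis must be used). Split the time axis into three phases. In the growth phase, as long as $(f(N)/N)Z_0\le \eps$, I couple $Z_0$ from below with a supercritical branching process whose per-capita Malthusian parameter is at least $(\alpha-\mu-O(\eps))/f(N)$; a standard supercritical-branching estimate (as in the proof of \cite[Lemma C.1]{chazottes2016sharp}) shows that, starting from $\lceil 1/\eps\rceil$ individuals, it reaches size $\eps N/f(N)$ within time $\tfrac12 C_0(\eps)f(N)\ln N$ with probability $\ge 1-\delta(\eps)$. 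Once the process is macroscopic, Remark \ref{n_k}a) gives that $(f(N)/N)Z_0(f(N)\cdot)$ is uniformly close on bounded intervals to the logistic ODE $\dot y=y(\alpha-\mu-y/2)$, which converges to $\bar n_0$ in finite rescaled time. Finally, classical exit-time arguments for birth--death processes near a stable equilibrium (the same ingredients that underlie Lemma \ref{exp_rate}) show that $Z_0$ remains within $C_0\eps$ of $\bar n_0 N/f(N)$ for at least a time exponential in $N/f(N)$, which is far larger than $Rf(N)\ln N$.

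Inductive step $k\ge 1$. Assuming the claim for all indices $\le k-1$, I apply it on a slightly enlarged window $[C_{k-1}(\eps), C_{k-1}(\eps)+R+R_0]\cdot f(N)\ln N$ to conclude that $Z_0,\ldots,Z_{k-1}$ all lie within $C_\ell\eps$ of $\bar n_\ell N/f(N)$ throughout, with probability $\ge 1-\delta(\eps)$. Conditional on this favorable event, $Z_k$ sees a roughly constant immigration source from mutations out of $Z_{k-1}$ at rate $\mu\bar n_{k-1}N/f(N)^2$, together with an autonomous logistic-type dynamics whose coefficients are the equilibrium values of the lower components. The induced one-dimensional ODE $\dot n_k=\mu\bar n_{k-1}+n_k(\alpha-\mu-n_k/2-\sum_{i<k}\bar n_i)$ has a unique positive globally attracting equilibrium which, by the recursion \eqref{recursion_n}, coincides with $\bar n_k$. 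I would then repeat the three-phase analysis: the immigration source seeds $Z_k\ge 1$ within time $O(f(N))$, after which a supercritical-branching-with-immigration comparison pushes $Z_k$ to size $\eps N/f(N)$ within time $O(f(N)\ln N)$; the LLN drives the rescaled process to $\bar n_k$ in bounded rescaled time; and exit-time estimates give stability. This yields the conclusion with $C_k(\eps):=C_{k-1}(\eps)+C'(\eps)$ and a suitable $C_k$.

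The main obstacle I anticipate is twofold. First, in the initial growth phase of $Z_k$ one cannot rely on a purely multiplicative supercritical comparison because $Z_k$ may start at $0$, and in fact the effective per-capita rate $\alpha-\mu-\sum_{i<k}\bar n_i$ can be negative (it is, already for $k=1$, since $\bar n_0 = 2(\alpha-\mu)$); exploiting the immigration source from $Z_{k-1}$ is therefore essential to seed $Z_k$, and this seeding must be quantified by a subcritical-branching-with-constant-immigration estimate. Second, the induction requires the $O(\eps)$ stochastic fluctuations of $Z_0,\ldots,Z_{k-1}$ to enter the growth and equilibrium of $Z_k$ only perturbatively; this I would handle by absorbing them into the $C_k\eps$ slack in the conclusion and using Lipschitz continuity of the attracting equilibrium of the ODE in its frozen lower-level coefficients.
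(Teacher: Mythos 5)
Your proposal is correct and follows essentially the same route as the paper: induction on $k$, a supercritical branching comparison for the initial growth of $Z_0$, persistence near the carrying capacity via one-dimensional birth--death comparisons and Lemma C.1-type estimates from \cite{champagnat2021stochastic}, and for $k\ge 1$ a freezing of the lower components near their equilibria so that $Z_k$ is sandwiched between birth--death processes with immigration from $Z_{k-1}$. You are in fact more explicit than the paper about the key point that for $k\ge 1$ the per-capita rate is negative and the growth must be driven by the immigration term.
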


The proof of this lemma  will be given in  Section \ref{ProofofLemmas}.
 
Let us now introduce the first time at which $Z_0^{(N)}$ (recall \eqref{Z0rates}) hits the trap $0$:
\begin{equation}\label{hitting_Z0i}
H_0^{(N)}:= \inf \{ r \geq 0,\, Z_0^{(N)}(r)= 0 \}.
\end{equation}

\begin{lem}\label{exp_rate}
Let $\nu_N:= \nu_N^{(0)}$ denote the quasi-stationary distribution of $Z_0^{(N)}$. For every $N$ there exists $\theta_N$  satisfying \eqref{order_time_ext} such that 
\begin{equation} \label{QSD}
\P_{\nu_N} \left( H_0^{(N)} >r  \right) = e^{-r/f(N)\theta_N}, \quad r >0.
\end{equation}
\end{lem}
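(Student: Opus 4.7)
The exponential identity \eqref{QSD} is a general fact about quasi-stationary distributions of absorbed Markov chains. Restricting the $Z_0^{(N)}$-generator (read off \eqref{Z0rates}) to the irreducible finite set $\{1,\ldots,N\}$ yields a sub-Markovian matrix whose spectral radius, by Perron--Frobenius, is attained at a unique simple negative eigenvalue $-\lambda_N$ with a strictly positive left eigenvector. After normalization this eigenvector is the QSD $\nu_N$, and $\P_{\nu_N}(H_0^{(N)}>r) = e^{-\lambda_N r}$ for all $r\ge 0$. Setting $\theta_N := 1/(f(N)\lambda_N)$ yields \eqref{QSD} by construction, so the task reduces to verifying the asymptotic \eqref{order_time_ext}.

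For this, I would view $Z_0^{(N)}$ as a density-dependent birth--death chain with carrying capacity $K := N/f(N)$: the rescaled process $Y^{(N)}(\tau) := Z_0^{(N)}(f(N)\tau)/K$ has jumps of size $\pm 1/K$ and, by \eqref{moderatesm} and \eqref{Z0rates}, birth rate $K b(y)(1+o(1))$ and death rate $K d(y)(1+o(1))$ at state $y$, where
\[
b(y) := \alpha y, \qquad d(y) := y(\mu + y/2).
\]
Its deterministic limit $\dot y = y(\alpha-\mu-y/2)$ (cf. Remark~\ref{n_k}a) has unstable fixed point $0$ and stable fixed point $\bar n_0 = 2(\alpha-\mu)$, so hitting $0$ is a Freidlin--Wentzell exit problem out of the basin of attraction of $\bar n_0$. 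Standard one-dimensional large-deviation asymptotics for density-dependent birth--death chains -- or equivalently a direct computation with the classical mean-hitting-time formula, where the sum over intermediate states is dominated by the neighbourhood of the quasi-potential's minimum at $\bar n_0$ and saddle at $0$ -- give
\[
\ln \E_{\bar n_0 K}\bigl[\inf\{\tau\ge 0: Y^{(N)}(\tau)=0\}\bigr] \sim KV \qquad (N \to \infty),
\]
with quasi-potential
\[
V \;:=\; \int_0^{\bar n_0}\ln\frac{b(y)}{d(y)}\,dy \;=\; \int_0^{2(\alpha-\mu)}\ln\frac{\alpha}{\mu+y/2}\,dy.
\]
The substitution $u = \mu+y/2$ gives $V = 2\int_\mu^\alpha \ln(\alpha/u)\,du = 2\bigl(\alpha-\mu+\mu\ln(\mu/\alpha)\bigr)$. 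Since $f(N)\theta_N = \E_{\nu_N}[H_0^{(N)}]$, this delivers \eqref{order_time_ext} once the mean hitting time under $\nu_N$ is shown to agree on the logarithmic scale with the one starting from $\bar n_0 K$.

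The main obstacle is precisely this last transfer from a point-start at $\bar n_0 K$ to a $\nu_N$-start. For the upper bound on $\theta_N$, Lemma~\ref{lem_coupling1} applied with $k=0$ shows that from any $z\ge 1$ the chain reaches an $\eps K$-neighbourhood of $\bar n_0 K$ within time $O(f(N)\ln N)$ with probability $1-o(1)$, a time negligible compared to $f(N)e^{KV}$; a standard pathwise estimate then bounds the probability of descending from there to $0$ within time $f(N)e^{KV(1+\eps)}$. For the lower bound, one can either show that $\nu_N$ is concentrated in a shrinking relative neighbourhood of $\bar n_0 K$ -- its log-density inherits the strict quadratic maximum at $\bar n_0$ of the explicit BD-potential $y\mapsto K\int_0^y \ln(b(u)/d(u))\,du$ -- or invoke $\lambda_N = 1/\E_{\nu_N}[H_0^{(N)}]$ together with a uniform lower bound $\E_z[H_0^{(N)}] \ge f(N)\, e^{KV(1-\eps)}$ valid for all $z$ close to $\bar n_0 K$. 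Either route yields the two-sided logarithmic equivalence $\ln\theta_N \sim KV$, which is all that \eqref{order_time_ext} requires; no Kramers-type pre-factor is needed, and this is the substantive simplification that keeps the argument considerably shorter than a sharp exit-time analysis would be.
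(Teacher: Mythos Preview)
Your argument is correct in outline and arrives at the right constant, but it takes a different route from the paper's proof. The paper does not compute the quasi-potential directly. Instead it sandwiches $Z_0^{(N)}$ between two auxiliary logistic birth--death processes $Z_0^{(\pm,N)}$ whose per-capita rates are \emph{exactly} of the density-dependent form $b(y)=\alpha_\pm$, $d(y)=\mu_\pm + y/2$ (with $\alpha_\pm,\mu_\pm$ differing from $\alpha,\mu$ by $\eps$), and then invokes the sharp extinction-time and QSD-concentration results of Chazottes--Collet--M\'el\'eard \cite{chazottes2016sharp} for those clean processes. The passage from the stochastic domination $Z_0^{(R,-,N)}\le Z_0\le Z_0^{(+,N)}$ (where the lower process is reflected at $\lfloor\eps N/\alpha\rfloor$) to the inequality $\theta_N^{(-)}\le\theta_N\le\theta_N^{(+)}$ requires an additional argument that the reflected process has the same exponential timescale as the unreflected one, which the paper obtains by showing $\nu_N^{(-)}$ puts vanishing mass above the reflection level.

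What you gain with your approach is conceptual transparency: the integral $\int_0^{\bar n_0}\ln(b/d)$ makes the constant $2(\alpha-\mu+\mu\ln(\mu/\alpha))$ emerge naturally, and you avoid citing the rather heavy machinery of \cite{chazottes2016sharp}. What the paper's approach buys is rigour with less work: the $(1+o(1))$ corrections in your rates---the factor $(1-y/f(N))$ in the birth rate and the $-1/(2K)$ shift in the death rate---mean $Z_0^{(N)}$ is not \emph{exactly} a density-dependent chain with fixed $b,d$, so invoking ``standard one-dimensional large-deviation asymptotics'' strictly requires either a version of the theory that tolerates $N$-dependent rates, or precisely the sandwiching step that the paper performs. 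Your transfer from the point-start at $\bar n_0 K$ to the $\nu_N$-start is also only sketched; the paper sidesteps this by working throughout with the QSDs $\nu_N^{(\pm)}$ of the auxiliary processes, whose Gaussian concentration is again supplied by \cite{chazottes2016sharp}. Neither gap is fatal, but filling them would likely lead you back to something close to the paper's coupling argument.
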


\begin{lem}\label{lem_fast_ext}
a) There exists $C>0$ such that for $\eps>0$
$$ \liminf_{N \to \infty} \P \left( Z_1^{(N)}(H_0^{(N)})\geq \frac{C}{\eps^2}\, \Big|\,Z_0^{(N)}(0) \geq 1/\eps \right)= 1-\delta(\eps), $$
where $\delta(\eps) \to 0$ as $\eps\to 0$.

b) With $(\theta_N)$ as in Lemma \ref{exp_rate}, let $(r_N)$ be a sequence with $\frac{r_N}{f(N)\theta_N} \to \infty$, and define ${\underline k}_N:= \min\{k \in \mathbb N: Z_{k}^{(N)}(r_N) > 0\}$. Then
$$ \liminf_{N \to \infty} \P \left( Z_{\underline k_N}^{(N)}(r_N)\geq \frac1{\eps}\,\Big |\,Z_0^{(N)}(0) \geq 1/\eps\right) =  1-\delta(\eps), $$
where $\delta(\eps) \to 0$ as $\eps\to 0$.
\end{lem}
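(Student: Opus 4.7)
The strategy is to show that before $Z_0^{(N)}$ is extinct, the process $Z_1^{(N)}$ has long been close to its equilibrium value $\bar n_1 N/f(N)$, which vastly exceeds $C/\eps^2$. Starting from $Z_0^{(N)}(0) \geq 1/\eps$, I would first couple $Z_0^{(N)}$ from below with a supercritical linear birth-death process of per-particle birth rate $s_N$ and death rate $m_N$; this coupling is valid while $Z_0^{(N)} \ll N$. Since each lineage in the linear process has extinction probability $\mu/\alpha < 1$, the probability that all $\lceil 1/\eps\rceil$ initial lineages die out before $Z_0^{(N)}$ reaches a size of order $\eps N/f(N)$ is at most $(\mu/\alpha)^{\lceil 1/\eps\rceil} =: \delta_1(\eps)$. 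On the complement of this event, Lemma~\ref{lem_coupling1} applied with $k=1$ yields that $(Z_0^{(N)}, Z_1^{(N)})$ enters a neighbourhood of $(\bar n_0, \bar n_1) N/f(N)$ within time $O(f(N)\log N)$ and remains there throughout any fixed-length window, with probability $1-\delta_2(\eps)$.

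\textbf{From fixed windows to $H_0^{(N)}$.} The delicate step is extending this equilibrium regime up to the random extinction time $H_0^{(N)}$, which by Lemma~\ref{exp_rate} is of order $f(N)\theta_N$, far beyond the fixed-length windows of Lemma~\ref{lem_coupling1}. I would handle this by iterating Lemma~\ref{lem_coupling1} across consecutive windows of length $R f(N)\log N$ via the Markov property: at each restart time, conditional on $Z_0^{(N)}$ still being positive, the pair $(Z_0^{(N)}, Z_1^{(N)})$ is close to its joint quasi-equilibrium, so the hypothesis of Lemma~\ref{lem_coupling1} is satisfied afresh. At $H_0^{(N)}$ itself, the intra-class-$0$ coalescence rate in \eqref{ratescoal} vanishes at $Z_0^{(N)}=1$, so the jump $Z_0^{(N)}\colon 1\to 0$ can only occur via a mutation (rate $m_N$), which increases $Z_1^{(N)}$ by $1$; hence $Z_1^{(N)}(H_0^{(N)}) = Z_1^{(N)}(H_0^{(N)}-)+1$. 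Together with a diffusion-type bound on fluctuations of $Z_1^{(N)}$ on the short timescale just before $H_0^{(N)}$, this gives $Z_1^{(N)}(H_0^{(N)}) \geq (\bar n_1/2) N/f(N) \gg C/\eps^2$ with probability $1-\delta(\eps)$.

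\textbf{Plan for part (b).} Iterate part~(a) across successive clicks. After $H_0^{(N)}$, $Z_0^{(N)}$ is absorbed at $0$ and $Z_1^{(N)}$ plays the role of a new ``ground level'' whose autonomous dynamics mimic those of $Z_0^{(N)}$ (now additionally fed by $Z_2^{(N)}$). Applying part~(a) inductively produces a sequence of click times $T_0:=0 < T_1 := H_0^{(N)} < T_2 < \cdots$ at which $Z_{j+1}^{(N)}(T_{j+1}) \geq C/\eps^2$ w.h.p., interspersed with equilibrium phases of length of order $f(N)\theta_N$ during which $Z_{j+1}^{(N)}$ hovers near $\bar n_0 N/f(N)$. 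At time $r_N$, pick $j$ with $T_j \leq r_N < T_{j+1}$, so $\underline k_N = j+1$. Either $r_N - T_j$ exceeds the burn-in time $C_0(\eps) f(N)\log N$, in which case $Z_{\underline k_N}^{(N)}(r_N) \approx \bar n_0 N/f(N) \gg 1/\eps$, or $r_N$ falls in a transient window of length $O(f(N)\log N)$ immediately after a click, an event of asymptotic probability $O(\log N/\theta_N) \to 0$.

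\textbf{Main obstacle.} The central technical difficulty is the renewal argument in part~(a): bridging the $f(N)\log N$ timescale of Lemma~\ref{lem_coupling1} up to the much larger extinction scale $f(N)\theta_N$ while keeping only a $\delta(\eps)$ loss in probability. A naive union bound over $O(\theta_N/\log N)$ windows is useless since $\delta_2(\eps)$ does not shrink with the window index. Instead, a quantitative quasi-stationary analysis of the joint $(Z_0^{(N)}, Z_1^{(N)})$ dynamics (in the spirit of the framework in \cite{champagnat2023general}) is needed to show that large excursions away from equilibrium are rare even on the full extinction scale. A secondary subtlety is ruling out a ``size-biasing'' effect at $H_0^{(N)}$: conditioning on the rare event of $Z_0^{(N)}$ extinction could a priori bias $Z_1^{(N)}$ downward, since a large $Z_1^{(N)}$ tends to hasten the death of $Z_0^{(N)}$ through selective competition; the observation on the vanishing coalescence rate at $Z_0^{(N)}=1$ is what mitigates this.
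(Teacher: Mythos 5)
Your plan for part (a) takes a genuinely different route from the paper, and it contains a gap that you yourself flag but do not close. You aim to show that $Z_1^{(N)}(H_0^{(N)})$ is of order $\bar n_1 N/f(N)$ by keeping $(Z_0^{(N)},Z_1^{(N)})$ near its joint quasi-equilibrium all the way up to the extinction time $H_0^{(N)}$. But Lemma \ref{lem_coupling1} only controls the process on windows of length $O(f(N)\ln N)$, whereas $H_0^{(N)}$ lives on the scale $f(N)\theta_N$ with $\ln\theta_N \asymp N/f(N)$; as you note, iterating the lemma loses a fixed $\delta(\eps)$ per window and a union bound over $\theta_N/\ln N\to\infty$ windows is useless. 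The ``quantitative quasi-stationary analysis of the joint dynamics'' that you invoke to bridge this is precisely the missing ingredient, and it is not supplied by anything proved in the paper. A second unresolved point is the conditioning at $H_0^{(N)}$: although $Z_0^{(N)}$ is autonomous (so your worry about $Z_1^{(N)}$ hastening the death of $Z_0^{(N)}$ is actually moot --- class $0$ feels no competition from class $1$), conditioning on imminent extinction forces $Z_0^{(N)}$ through an atypical descent from its quasi-equilibrium level $\sim 2(\alpha-\mu)N/f(N)$ down to $0$, which takes a time of order $f(N)\ln N$, not a ``short timescale''; during that descent the feeding and competition terms acting on $Z_1^{(N)}$ are atypical, so a ``diffusion-type bound on fluctuations'' is not enough to conclude that $Z_1^{(N)}$ is still large at $H_0^{(N)}$.

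The paper avoids both difficulties by proving only the weaker bound $Z_1^{(N)}(H_0^{(N)})\geq C/\eps^2$ directly from the \emph{final} segment of $Z_0^{(N)}$'s trajectory: after its last visit to $\lfloor 1/\eps^2\rfloor$, the process $Z_0^{(N)}$ conditioned to hit $0$ before returning to $\lfloor 1/\eps^2\rfloor$ is computed via a harmonic ($h$-)transform to be essentially a subcritical branching process (birth rate $\mu/f(N)$, death rate $\alpha/f(N)$), hence dies out within time $O(f(N)\ln(1/\eps))$; crucially, the $h$-transform does not alter the proportion of downward jumps of $Z_0^{(N)}$ that are mutations into class $1$, so among the $\geq 1/\eps^2$ final deaths a positive fraction (independent Bernoullis with parameter $\mu/(\mu+f(N)(i-1)/(2N))$) feed $Z_1^{(N)}$, and these fresh immigrants survive the short remaining time. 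No long-time equilibrium control of $Z_1^{(N)}$ is needed at all. Your observation that the last jump $1\to 0$ of $Z_0^{(N)}$ is necessarily a mutation is correct but is only a tiny piece of what is required. For part (b) your iteration over successive clicks matches the paper's (very terse) inductive use of part (a), but of course it inherits the gap from part (a). As it stands, the proposal is not a complete proof.
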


The two just stated lemmas (which will be proved in Section \ref{ProofofLemmas})  are key for  obtaining the renewal structure of the dynamics of the potential ancestors with minimal load. They imply in particular that when the set of potential ancestors with the currently minimal load gets extinct, the number of minimum load potential ancestors that ``come next''   
 is large enough for reaching
a size of order $N/f(N)$ given by the quasi-stationary distribution $\nu_N$. As we will see in Section \ref{secclickrates}, this will ensure, using duality, that the succession of several clicks (in the sense of Definition \ref{clicktimes}) within a time frame of order smaller than $f(N)\theta_N$ is not likely.

\section{Quick merging along the Ancestral Selection Graph}\label{Qickmerge}
The main result of this section, which will be a key ingredient in the proofs of  Theorems~\ref{thmclicks} and \ref{thmprofile}a), is an upper estimate for the time it takes for the merging of the sets of  load $k$ potential ancestors of two  $\mathscr P_T$-measurable random sets $ \mathscr J_T^1$ and $\mathscr J_T^2$ of  $[N] \times \{T\}$, where $T$ is a $\mathscr P$-stopping time (recall Definition \ref{filt}). Roughly stated, this result says that this merging happens with high probability as $N\to \infty$ within a time frame of order $f(N)\ln N$, provided only that the sets  $\mathscr J_T^1$ and $\mathscr J_T^2$ are sufficienly large.
With reference to Definition~\ref{colouredASG}, we define 
  the {\em (random) merging time} of the two load $k$ ASG's  $\mathscr{A}^{\mathscr J_T^1}(k)$ and $\mathscr{A}^{\mathscr J_T^2}(k)$ as
\begin{equation} \label{def_merg_ancgraph0}
\mathscr{C}_k^{\mathscr J_T^1, \mathscr J_T^2}:= \sup \left\{ t\leq T: \mathscr{A}^{\mathscr J_T^1}_t(k)=\mathscr{A}^{\mathscr J_T^2}_t(k)\right\}.
\end{equation}
\begin{prop}\label{lem_merging1}Let $T$ be a $\mathscr P$-stopping time and let $\mathscr J_T^1$, $\mathscr J_T^2$ be $\mathscr P_T$-measurable random subsets of $[N]\times\{T\}$. Then,
for any $k \ge 0$ and  $\eps>0$, there exists a finite constant $C(\eps)$ s.t.
\begin{equation}\label{mergingstatement} \liminf_{N \to \infty} \P \left(\mathscr{C}_k^{\mathscr J_T^1, \mathscr J_T^2}\geq T-C(\eps) f(N) \ln N \Big|\#\mathscr J_T^1 \ge 1/\eps \, \mbox{ and } \#\mathscr J_T^2\geq 1/\eps \right)\ge 1-\delta(\eps) 
\end{equation}
with $\delta(\eps) \to 0$ as $\eps \to 0$.
\end{prop}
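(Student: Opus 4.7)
The proof unfolds in two stages, a growth stage followed by a merging stage. By the strong Markov property of the backward filtration $\mathscr P$ at the stopping time $T$, we may work conditionally on $\mathscr P_T$; the joint evolution of $\big((A^{\mathscr J_T^1}_{T-r}(\ell))_{\ell \in \mathbb N_0},\,(A^{\mathscr J_T^2}_{T-r}(\ell))_{\ell \in \mathbb N_0}\big)_{r\ge 0}$ is then driven by the pre-$T$ portion of $(\mathcal C,\mathcal S,\mathcal M)$ via the Markov dynamics of Lemma~\ref{fromAtoZ}. Applying Lemma~\ref{lem_coupling1} separately to each of the two ASGs yields that, with probability at least $1-\delta(\eps)$, for every $\ell\in\{0,\ldots,k\}$ and $i\in\{1,2\}$ the size $A^{\mathscr J_T^i}_{T-r}(\ell)$ sits within $C_\ell\eps N/f(N)$ of $\bar n_\ell N/f(N)$ for all $r$ in a time window of the form $[C_k(\eps)f(N)\ln N,\,(C_k(\eps)+1)f(N)\ln N]$.

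The merging at the load-$0$ layer is the heart of the proof. The key set identity $\mathscr A^{\mathscr J_T^1}_t(0)\cup\mathscr A^{\mathscr J_T^2}_t(0)=\mathscr A^{\mathscr J_T^1\cup\mathscr J_T^2}_t(0)$ shows that the union is itself governed by the $Z_0$-dynamics from $\mathscr J_T^1\cup\mathscr J_T^2$, so by Lemma~\ref{lem_coupling1} its equilibrium size equals that of each summand, namely $\bar n_0 N/f(N)$. Writing $E^i_t:=\mathscr A^{\mathscr J_T^i}_t(0)\setminus\mathscr A^{\mathscr J_T^{3-i}}_t(0)$, a lineage in $E^i_t$ is absorbed into the other ASG whenever a neutral arrow links it (either as source or head) to a lineage in $\mathscr A^{\mathscr J_T^{3-i}}_t(0)$; this yields a per-lineage absorption rate $\asymp|\mathscr A^{\mathscr J_T^{3-i}}_t(0)|/N\asymp 2(\alpha-\mu)/f(N)$, to which one adds $\mu/f(N)$ from outgoing mutations. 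In contrast, new excess lineages arise only at rate $\asymp s_N|E^i_t|=\alpha|E^i_t|/f(N)$ from selective branching. Since $\mu<\alpha$, the net per-lineage drift $(\mu-\alpha)/f(N)$ is subcritical, and a subcritical birth-death comparison drives $|E^i_t|$ to $0$ within $O(f(N)\ln N)$ regardless of the initial size.

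Once load-$0$ equality $\mathscr A^{\mathscr J_T^1}_{t_0}(0)=\mathscr A^{\mathscr J_T^2}_{t_0}(0)$ is reached at some $t_0$ in the window, it propagates backward to all $s\le t_0$ by the shared Markov dynamics. Equality at load $k$ is then obtained by iterating the argument up the layers: once layers $0,\ldots,\ell-1$ have merged on a sub-window, the mutation feed from layer $\ell-1$ acts identically on both ASGs at layer $\ell$, and the same subcritical cross-coalescence analysis applied at layer $\ell$ extinguishes the layer-$\ell$ symmetric difference within an additional $O(f(N)\ln N)$. After $k+1$ such iterations one obtains $\mathscr{C}_k^{\mathscr J_T^1,\mathscr J_T^2}\ge T-C(\eps)f(N)\ln N$ with $C(\eps)$ depending on $k$.

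The \emph{main obstacle} is to make the excess-set analysis rigorous: $E^i_t$ is not Markovian on its own, since selective competition and selective branching couple the different load layers, and the per-lineage absorption rate depends on the fluctuating size of the complementary load-$0$ ASG. A rigorous treatment will likely compare $|E^i_t|$ with an auxiliary subcritical birth-death chain in the spirit of Lemma~C.1 of \cite{chazottes2016sharp} used in Remark~\ref{n_k}, and patch this comparison into the quasi-equilibrium estimate provided by Lemma~\ref{lem_coupling1}, with a union bound over a short grid of candidate equality times within the window.
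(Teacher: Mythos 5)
Your proposal is correct and follows essentially the same route as the paper: bring both ASGs to their quasi-equilibrium size $\asymp N/f(N)$ via Lemma \ref{lem_coupling1}, then dominate the load-$0$ symmetric difference by a subcritical branching process (per-lineage birth rate $\approx \alpha/f(N)$ from selective branching versus death rate $\approx (2(\alpha-\mu)+\mu)/f(N)$ from coalescence and mutation, the same net drift $(\mu-\alpha)/f(N)<0$ you compute), and iterate over the load layers. The ``main obstacle'' you flag is resolved in the paper exactly as you anticipate — by stochastically dominating the \emph{cardinality} of the symmetric difference by a subcritical birth--death process and invoking the $O(f(N)\ln N)$ extinction-time bound of \cite[Lemma A.1]{champagnat2021stochastic} — and the only cosmetic difference is that the paper runs the induction on the cumulative unions $\bigcup_{m\le k}\mathscr{A}(m)$, whose symmetric difference coincides with the layer-$k$ one once the lower layers have merged, rather than on the single layer $k$ directly.
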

\begin{proof} 
The strategy of the proof consists in showing by induction that for all $k \ge 0$ the sets 
$$ \mathscr{A}^{\mathscr J_T^1}(0) \cup ... \cup \mathscr{A}^{\mathscr J_T^1}(k) \quad \text{and} \quad \mathscr{A}^{\mathscr J_T^2}(0) \cup ... \cup \mathscr{A}^{\mathscr J_T^2}(k) $$
merge with high probability within a time of order $f(N) \ln N$.
Let us begin with the case $k=0$.
We will write $\mathscr A^i_{T-r}:= \mathscr{A}^{\mathscr J_T^i}_{T-r}(0)$, $r\ge 0$, $i=1,2$, and
 will study the dynamics of the set-valued process $$ \mathscr A^1_{T-r} \triangle \mathscr A^2_{T-r} = \left(\mathscr A^1_{T-r} \cup \mathscr A^2_{T-r}\right) \setminus \left(\mathscr A^1_{T-r} \cap \mathscr A^2_{T-r}\right), \quad r\ge 0,$$ and of its cardinality 
$$ \#\left( \mathscr A^1_{T-r} \triangle \mathscr A^2_{T-r}\right)= \#(\mathscr A^1_{T-r} \cup \mathscr A^2_{T-r}) - \# (\mathscr A^1_{T-r}\cap \mathscr A^2_{T-r})$$
as $r$ increases. For the sake of readability, we define  subsets $H_1$, $H_2$ by
$$\mathscr A^1_{T-r} = H_1\times\{T-r\}, \quad \mathscr A^2_{T-r} = H_2\times\{T-r\}.$$
Five possible types of transitions of $H_1\triangle H_2$ may result from the elements of the processes $(\mathcal C, \mathcal S, \mathcal M)$:
\begin{itemize}
\item $(i,j,T-r) \in \mathcal{S}$ with $j \in H_1 \triangle H_2$ and $i \notin H_1 \cup H_2$; then $i$ becomes an element of $H_1 \triangle H_2.$ 
This type of transition adds one element to $H_1 \triangle H_2$ and has a rate
\begin{equation}\label{r1} q_1:= s_N \#\left(H_1 \triangle H_2\right)\left(1-\frac{\# (H_1 \cup H_2)}{N}\right) 
\end{equation}
\item $(i,j,T-r) \in \mathcal{C}$ with $j \in H_1 \triangle H_2$ and $i \in H_1 \cap H_2$; then $j$ is removed from $H_1 \triangle H_2$.\\
This type of transition removes one element from $H_1 \triangle H_2$ and has a rate
\begin{equation*}
 q_2:=\frac{1}{N} \#\left(H_1 \triangle H_2\right)\#\left(H_1\cap H_2\right) 
 \end{equation*}
\item $(i,j,T-r) \in \mathcal{C}$ with $i$ and $j$ either both belonging to $H_1\setminus H_2$ or both belonging to $H_2\setminus H_1$; then $j$ is removed from $H_1 \triangle H_2$.\\
This type of transition removes one element from $H_1 \triangle H_2$ and has a rate
\begin{equation}\label{r3}
q_3:=\frac{1}{2N}\left( \#\left( H_1 \setminus H_2\right)\left( \#\left( H_1 \setminus H_2 \right)-1\right) +  \#\left( H_2 \setminus H_1\right)\left( \#\left( H_2 \setminus H_1 \right)-1\right)\right).
\end{equation}
\item $(i,j,T-r) \in \mathcal{C}$ with one of $i$ and $j$  belonging to $H_1\setminus H_2$ and the other belonging to $H_2\setminus H_1$; then both $i$ and $j$ are removed from $H_1 \triangle H_2$.\\
This type of transition removes two elements from $H_1 \triangle H_2$ and has a rate 
\begin{equation}\label{r3}
q_4:=\frac{1}{N}\big( 
\#
\left( H_1 \setminus H_2\right)
 \#\left( H_2 \setminus H_1 \right)\big).
\end{equation}
\item $(i,T-r) \in \mathcal{M}$ with $i \in H_1 \triangle H_2$; then 
then $j$ is removed from $H_1 \triangle H_2$.\\
This type of transition removes one element from $H_1 \triangle H_2$ and has a rate
\begin{equation}\label{r4}  q_5=m_N\#\left(H_1 \triangle H_2\right).
\end{equation}
\end{itemize}
The sum of $q_2$, $q_3$, $q_4$ and $q_5$ equals
\begin{align} \notag q_2+q_3+q_4+q_5=&  \#(H_1 \triangle H_2)\left( m_N+ \frac{\#(H_1 \cup H_2) +\# (H_1\cap H_2)-1}{2N}\right)\\ \label{q2to5}
 =& \#(H_1 \triangle H_2)\left( m_N+ \frac{\#H_1+ \#H_2-1}{2N}\right). 
 \end{align}
From Lemma \ref{fromAtoZ} and  Lemma \ref{lem_coupling1} we know that for any $\eps>0$ there exists a constant $C(\eps)$ such that if 
$$ \# \mathscr A^i_T \geq 1/\eps , \,i=1,2,$$
then for any $R>0$, with a probability close to $1$ for $\eps$ small enough and $N$ large enough, 
$$\frac{2N}{f(N)}(\alpha-\mu-2\eps)\leq \# \mathscr A^1_{T-r},\, \#\mathscr A^2_{T-r} \leq  \frac{2N}{f(N)}(\alpha+\mu-2\eps) $$
for  $C(\eps)f(N)\ln N  \leq r \leq (C(\eps)+R)f(N)\ln N$.
Because of~\eqref{r1} and \eqref{q2to5}, in such a time window we have
$$ q_1\leq \frac{\alpha}{f(N)}\left(\#(\mathscr A^1_{T-r} \triangle \mathscr A^2_{T-r})\right) \quad \text{and} \quad
q_2+q_3+q_4+
q_5 
\geq \#(\mathscr A^1_{T-r} \triangle \mathscr A^2_{T-r}) \frac{ \alpha+ (\alpha-\mu-5\eps)}{f(N)}$$
for $N$ large enough.
The process $ \left(\#(\mathscr A^1_{T-r} \triangle \mathscr A^2_{T-r})\right)_{r \ge 0}$ is thus stochastically dominated by a branching process with individual birth rate $\alpha/f(N)$ and death rate $$\left( \alpha+ (\alpha-\mu-5\eps)\right) /f(N). $$
The extinction time of such a process, with an initial state smaller than $N$, is smaller than
$$ 2\frac{f(N)\ln N}{\alpha-\mu-5\eps} $$
with a probability converging to $1$ when $N$ goes to infinity (see e.g. \cite{champagnat2021stochastic} Lemma A.1). This concludes the proof of the proposition for the case $k=0$. 

Assume now that the sets 
$$ \mathscr{A}^{\mathscr J_T^1}(0) \cup ... \cup \mathscr{A}^{\mathscr J_T^1}(k-1) \quad \text{and} \quad
 \mathscr{A}^{\mathscr J_T^2}(0) \cup ... \cup \mathscr{A}^{\mathscr J_T^2}(k-1) $$
merge at time $T_{k-1} =T-R_{k-1}$, where  $R_{k-1} = O(f(N) \ln N)$. From Lemma \ref{lem_coupling1} we know that there exists $R<\infty$ such that for any $K<\infty$, with a probability close to one the size of this union is close to 
$$ \frac{N}{f(N)}\sum_{l=0}^{k-1} \bar{n}_l $$
during the time interval $[T_{k-1}+Rf(N) \ln N,T_{k-1}+(R+K)f(N) \ln N]$, and remains to be so during any time frame of order $f(N)\ln N$. We also know that the sizes of $ \mathscr{A}^{\mathscr J_T^1}(k)$ and $ \mathscr{A}^{\mathscr J_T^2}(k)$ are close to $N\bar{n}_k/f(N)$ during the same time frame. Let us again use the abbreviations  $\mathscr A^1_{T-r}$ and $\mathscr A^2_{T-r}$, now for
$$\mathscr A^i_{T-r}:= \mathscr{A}^{\mathscr J_T^i}_{T-r}(0)\cup... \cup \mathscr{A}^{\mathscr J_T^i}_{T-r}(k), \quad  i=1,2.$$
By definition of $T_{k-1}$ we have the equality
\begin{equation}\label{Acoupling}
 \#\left(\mathscr A^1_{T_{k-1}-r} \triangle \mathscr A^2_{T_{k-1}-r}\right)= \# \left(\mathscr{A}^{\mathscr J_T^1}_{T_{k-1}-r}(k) \triangle \mathscr{A}^{\mathscr J_T^2}_{T_{k-1}-r}(k)\right), \quad r \ge 0. 
 \end{equation}

Another crucial observation is that the upward and downward jump rates of the process
$$\# \left(\mathscr A^1_{T_{k-1}-r}\triangle  \mathscr A^2_{T_{k-1}-r}\right)_{r\ge 0}$$
are the same as those of the process 
$$\# \left(\mathscr{A}^{\mathscr J_T^1}_{T-r}(0)\triangle \mathscr{A}^{\mathscr J_T^2}_{T-r}(0)\right)_{r\ge 0},$$
resulting from \eqref{r1} -- \eqref{r4}.  (In particular, for $T-r\le T_{k-1}$, 
the mutational events only affect the set
$\mathscr{A}^{\mathscr J_T^1}_{T-r}(k) \triangle \mathscr{A}^{\mathscr J_T^2}_{T-r}(k)$, 
whose cardinality by \eqref{Acoupling} equals that of $\mathscr A^1_{T-r} \triangle \mathscr A^2_{T-r}$.)
The rest of the proof now follows that same lines as in the case $k=0$.
\end{proof}

Similarly as in \eqref{def_merg_ancgraph0}, we define 
  the {\em (random) merging time} of the ASG's  $\mathscr{A}^{\mathscr J_T^1} $ and $\mathscr{A}^{\mathscr J_T^2} $ as
$$\mathscr{C}^{\mathscr J_T^1, \mathscr J_T^2}:= \max \left\{ t\leq T: \mathscr{A}^{\mathscr J_T^1}_t=\mathscr{A}^{\mathscr J_T^2}_t\right\}.$$
Since in the special case $m_N = 0$ the load zero ASG $\mathscr A^{\mathscr J_T}_t(0)$ equals the `untyped' ASG $\mathscr A^{\mathscr J}_t$, we immediately obtain the following corollary by putting $\mu=0$ and $k=0$ in Proposition~\ref{lem_merging1}:
\begin{cor}\label{mergingcor}
Let $T, \mathscr J_T^1, \mathscr J_T^2$ be as in Proposition \ref{lem_merging1}. Then for any $\eps>0$, there exists a finite constant $C(\eps)$ such that
\eqref{mergingstatement} also holds for $\mathscr{C}^{\mathscr J_T^1, \mathscr J_T^2}$ in place of $\mathscr{C}_k^{\mathscr J_T^1 \mathscr J_T^2}$.
\end{cor}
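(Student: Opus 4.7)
The plan is to invoke Proposition \ref{lem_merging1} directly, with the parameter choice $\mu = 0$ and $k = 0$. The justification proceeds in two short observations.

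First, I would note that the untyped ASG $\mathscr{A}^{\mathscr J_T}$ of Definition \ref{ASGdefi} is built entirely from the path relation $\prec$ of Definition \ref{pathsdefi}, which in turn refers only to the Poisson point processes $\mathcal{C}$ and $\mathcal{S}$, never to $\mathcal{M}$. Consequently, the joint law of $(\mathscr{A}^{\mathscr J_T^1}, \mathscr{A}^{\mathscr J_T^2})$, and hence the law of the merging time $\mathscr{C}^{\mathscr J_T^1, \mathscr J_T^2}$, is unchanged if we artificially replace the mutation rate $m_N$ by $0$ while keeping $s_N$ fixed (provided of course we do not alter $T$ and $\mathscr J_T^1, \mathscr J_T^2$, which are $\mathscr P_T$-measurable data that involve all three Poisson processes; the distributional coupling is meant componentwise in $(\mathcal C, \mathcal S)$).

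Second, in the zero-mutation model the process $\mathcal M$ is empty, so every path automatically carries load $0$. Thus $\mathscr{A}^{\mathscr J_T}_t(0) = \mathscr{A}^{\mathscr J_T}_t$ for all $t \le T$, and therefore $\mathscr{C}_0^{\mathscr J_T^1, \mathscr J_T^2} = \mathscr{C}^{\mathscr J_T^1, \mathscr J_T^2}$ in that model. Applying Proposition \ref{lem_merging1} with $\mu = 0$, $k = 0$ (which lies inside the standing range $\mu < \alpha$, with $\bar n_0 = 2\alpha > 0$) gives \eqref{mergingstatement} for $\mathscr{C}_0^{\mathscr J_T^1, \mathscr J_T^2}$, which by the previous step transfers to $\mathscr{C}^{\mathscr J_T^1, \mathscr J_T^2}$ in the original model with $\mu > 0$.

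There is essentially no obstacle beyond a sanity check: one should verify that the constant $C(\eps)$ produced by the proof of Proposition \ref{lem_merging1} is well-defined at $\mu = 0$. This is immediate from inspection of the branching-process comparison in that proof, since the drift of the symmetric-difference process at $\mu = 0$ is at most $\alpha/f(N)$ upward versus at least $(2\alpha - 5\eps)/f(N)$ downward, still strictly negative for small $\eps$.
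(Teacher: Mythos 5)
Your proposal is correct and is essentially the paper's own argument: the paper likewise obtains the corollary by setting $\mu=0$ and $k=0$ in Proposition \ref{lem_merging1} and observing that in the mutation-free model the load-zero ASG coincides with the untyped ASG, which itself is a functional of $(\mathcal C,\mathcal S)$ only. Your additional checks (that the untyped merging time does not involve $\mathcal M$, and that the branching-process comparison in the proof of Proposition \ref{lem_merging1} remains subcritical at $\mu=0$) are a sound elaboration of what the paper leaves implicit.
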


\section{Click times on the Ancestral Selection Graph}\label{renASG}
In this section we will define for each $N \in \mathbb N$ a process of click times along the ASG back from some large time $u_N$. The main result of the section will be Proposition~\ref{thm48}, whose proof will build on results in Sections \ref{PercASG},  \ref{processZ} and~\ref{Qickmerge}. Roughly stated, this proposition says that the process of click times on the ASG, back from times that are large on the $f(N)\theta_N$-scale, converges on that scale locally around time $0$ to a standard Poisson process.  This result is key for the proof of Theorem \ref{thmclicks}. Indeed, in Section~\ref{secclickrates} we will argue that  the process of (forward) click times figuring in Theorem \ref{thmclicks}, which are represented as the jump times of the counting process $K_N^\ast$ defined in \eqref{besttype}, is locally on the $f(N)\theta_N$-scale with high probability (as $N\to \infty$) close to the process figuring in Proposition~\ref{thm48}. This latter process, however, can be read off from the ASG decorated with the points of $\mathcal M$. See  Figure~\ref{fig:boat3} and also Figure \ref{fig:boat4} for illustrations. 
\begin{defi}[Backward click times]\label{backwardclicks}
For $N\in \mathbb N$, $u\in \mathbb R$ and $\ell=0,1,\ldots$ we define (again partially suppressing $N$ in the notation) the {\em click times on the ASG back from  $[N]\times \{u\}$} as follows
\begin{equation*}
\hat T_{\ell}^{N,u}:= \min \left\{ t\le u: d_\mathcal{M}({\mathscr{A}}_t^{u},[N]\times\{u\}) = \ell \right\}.
\end{equation*}
We thus get a point process 
$$ \mathscr{T}^{N,u}:= \left\{ \hat T^{N,u}_{\ell}, \ell \in \N_0 \right\}. $$
\end{defi}
For later reference we will consider a sequence  $(u_N)$ of time points with the property
\begin{equation}\label{largetimepoints}
\frac{u_N}{\theta_Nf(N)}\to \infty \quad \mbox{as } N\to \infty.
\end{equation}
\begin{prop}\label{thm48} 
Let $(u_N)$ obey \eqref{largetimepoints} and  $(T_g^{N,u_N})_{1\le g \le g_N}$ be the points contained in the set  $\mathscr{T}^{N,u_N}\cap [0,u_N]$ ordered such that $$ 0 < T_1^{N,u_N}<\cdots < T_{g_N}^{N,u_N}\le u_N.$$
Putting $ T_0^{N,u_N} := 0$, we have for $n \in \N$ the following convergence in distribution as $N\to \infty$:
$$\left( \frac{T_{g}^{N,u_N}-T_{g-1}^{N,u_N}}{f(N)\theta_N} \right)_{1\le g\le n} \to  \left( \mathcal{W}_g \right)_{1\le g\le n}, $$
where $\left( \mathcal{W}_g \right)_{g \in\N}$ is a sequence of i.i.d. standard exponential random variables. Consequently, the sequence of processes $\mathscr N^N$, $N \in \N$, defined by
$$\mathscr N^N_\tau := \sum_{g \geq 1}\mathbf{1}_{\{T_{g}^{N,u_N} \le f(N)\theta_N \tau\}}, \quad \tau\ge 0$$
converges as $N\to \infty$ to a standard Poisson counting process.
\end{prop}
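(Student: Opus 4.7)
The plan is to work in the backward time variable $s := u_N - t$, under which the ASG $\mathscr A^{u_N}_{u_N-s}$ starts from $[N]\times\{u_N\}$ at $s=0$ and grows as $s$ increases. The backward click times then become the increasing sequence $\tilde s_\ell := u_N - \hat T_\ell^{N,u_N}$ at which the minimum load along the ASG jumps from $\ell$ to $\ell+1$. I aim to prove that the rescaled gaps $(\tilde s_\ell - \tilde s_{\ell-1})/(f(N)\theta_N)$, for $\ell$ near the right end of the window $[0,u_N]$, are jointly close to i.i.d.\ $\mathrm{Exp}(1)$; the convergence of $\mathscr N^N$ to a rate-$1$ Poisson counting process is then immediate, since a point process with i.i.d.\ $\mathrm{Exp}(1)$ gaps is precisely a rate-$1$ Poisson process.

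The key structural observation is that between two consecutive backward clicks the size of the current minimum-load ASG is an autonomous Markov process with exactly the jump rates \eqref{Z0rates} of $Z_0^{(N)}$. Indeed, whenever $A^{u_N}_{u_N-s}(\ell')=0$ for every $\ell' < \ell$, the transitions \eqref{ratescoal}--\eqref{ratesmutfeeding} restricted to $A^{u_N}_{u_N-s}(\ell)$ telescope: the selective-branching rate $(s_N/N)z_\ell(N-\sum_{k'} z_{k'})$ combined with the upward selective-competition contributions $\sum_{k'>\ell}(s_N/N)z_\ell z_{k'}$ yields a total upward rate $s_N z_\ell(1-z_\ell/N)$, while coalescence and mutation-out produce the downward rate $z_\ell(m_N+(z_\ell-1)/(2N))$. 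When the old minimum-load class hits zero, the next class takes over and, by the same telescoping, continues to evolve as a $Z_0^{(N)}$-like chain.

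On top of this autonomous structure, the asymptotics of a single inter-click epoch follow from the three lemmas of Section \ref{processZ}. Starting an epoch with at least $1/\varepsilon$ individuals in the current minimum-load class, Lemma \ref{lem_coupling1} places its size within $\varepsilon$ of the quasi-stationary level $\bar n_0 N/f(N)$ after time $O(f(N)\ln N)$, and a Yaglom-type argument shows that its conditional law is then close (in total variation) to the quasi-stationary distribution $\nu_N$. Lemma \ref{exp_rate} then gives that the additional time to extinction is exactly $\mathrm{Exp}(1/(f(N)\theta_N))$, and Lemma \ref{lem_fast_ext}(a) ensures that at the click the next class carries at least $C/\varepsilon^2$ members, closing the loop. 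Applying the strong Markov property at each backward click and iterating yields a joint approximation of successive rescaled gaps by independent $\mathrm{Exp}(1)$ variables; the equilibration overhead $f(N)\ln N$ is negligible on the $f(N)\theta_N$-scale because \eqref{order_time_ext} together with $f(N)=o(N/\ln\ln N)$ forces $\ln N/\theta_N \to 0$.

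The hypothesis $u_N/(f(N)\theta_N) \to \infty$ finally identifies the smallest $n$ values $T_1^{N,u_N} < \cdots < T_n^{N,u_N}$ in $[0,u_N]$ with the gaps of the \emph{last} $n$ backward clicks before $s = u_N$; since these lie in the stationary bulk of the renewal, their joint $\mathrm{Exp}(1)$ limit is inherited directly from the interior of the renewal. The main obstacle is the uniform control of the accumulated approximation error across the (random, but of order $u_N/(f(N)\theta_N)$) number of epochs that precede the window of interest. The cleanest fix is to exploit $\nu_N$ as an attractor: via a coupling with the $\nu_N$-initialised process together with the explicit exponential law \eqref{QSD}, one shows that after each click the minimum-load size returns to a $\nu_N$-like regime with overwhelming probability, making the distribution of the subsequent lifetime exponentially close to $\mathrm{Exp}(1/(f(N)\theta_N))$ regardless of the epoch's position.
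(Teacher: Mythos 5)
Your route is genuinely different from the paper's, and its structural core is sound. The paper does not analyse the click process as a renewal process directly. Instead it introduces the auxiliary point process $\mathscr S^{N,u_N}$ of \eqref{defSN}, obtained by iterating the map $T\mapsto S^{N,T}$ of \eqref{def_SN}, which at every step restarts the load-zero ASG from the \emph{full} population $[N]\times\{S^{N,u_N}_{\ell-1}\}$. This makes the gaps exactly identically distributed (the law of $T-S^{N,T}$ does not depend on $T$), so the Poisson limit of $\mathscr S^{N,u_N}$ (Corollary \ref{asexpcor}) follows from Lemma \ref{asexp} with no control of a renewal state; the actual backward clicks are then identified with the points of $\mathscr S^{N,u_N}$ on the window $[0,Cf(N)\theta_N]$ via the quick-merging Proposition \ref{lem_merging1}. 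Your approach instead keeps track of the minimum-load class itself. Your telescoping of the rates \eqref{ratescoal}--\eqref{ratesmutfeeding} is correct and recovers exactly \eqref{Z0rates} for the currently minimal class, which is the same autonomy observation the paper records when it says that $\#\bar{\mathscr A}^{u_N}_{u_N-r}$ follows the dynamics of $Z_{\bar k}$. What your route buys is that Proposition \ref{lem_merging1} is not needed for this proposition; what it costs is that the inter-click gaps are only \emph{conditionally} i.i.d.\ given the re-seeding sizes, so you must control the state at the start of each epoch.

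That is where the genuine gap sits. The window $[0,Cf(N)\theta_N]$ of interest lies, in backward time, after roughly $u_N/(f(N)\theta_N)\to\infty$ earlier epochs, and the only re-seeding estimate available, Lemma \ref{lem_fast_ext}a), has failure probability $\delta(\eps)$ per epoch --- small in $\eps$ but \emph{not} $o(1)$ in $N$. Your proposed fix asserts that after each click the process returns to a $\nu_N$-like regime ``with overwhelming probability''; no lemma in the paper delivers such a uniform-in-$N$ bound, and a union bound over a diverging number of epochs with error $\delta(\eps)$ each fails. The argument can be repaired in two ways: either invoke Lemma \ref{lem_fast_ext}b) as a black box at the single deterministic time $Cf(N)\theta_N$ to guarantee a good starting state for the $O_P(1)$ epochs you actually need (this is what the paper does at the beginning of its proof of Proposition \ref{thm48}), or exploit the self-improving nature of part a) (a class starting from $1/\eps$ seeds the next with $C/\eps^2$, so the successive failure probabilities decay summably). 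A second, more minor, point: identifying $T_1^{N,u_N}$ with the residual of the ``last'' backward epoch at backward time $u_N$ requires a stationarity/age argument for the renewal structure (the age of a renewal process at a late fixed time is not the generic gap distribution unless the gaps are exponential); you gesture at this with ``stationary bulk'' but it needs to be made explicit, e.g.\ by the asymptotic memorylessness of the epoch lifetime from any starting size of order at least $1/\eps$.
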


A large part of the remainder of this section is devoted to the proof of  Proposition \ref{thm48}.  With $A^T_t$ denoting the cardinality of the set  $\mathscr A^T_t$ of load $k$ potential ancestors (as defined just before Lemma \ref{fromAtoZ}, see also Definition \ref{colouredASG}), for any $N \in \N$ and any $\mathscr P^{(N)}$-stopping time~$T$  we define the $\mathscr P^{(N)}$-stopping time  $S^{N,T}$ 
\begin{equation} \label{def_SN} S^{N,T}:= 
 \sup\big\{t\le T: A^{T}_t(0) = 0\big\}. \end{equation}
In words, among all times at which all the potential ancestral paths of the population that lives at time $T$ carry at least one mutation, the time $S^{N,T}$ is the one which is closest to $T$. Let us also note that for fixed $N$ the distribution of $T-S^{N,T}$ does not depend on the choice of the $\mathscr P^{(N)}$-stopping time~$T$,
cf.~Lemma \ref{fromAtoZ}. A key step in the proof of Proposition \ref{thm48} is provided by
\begin{lem}\label{asexp}
For any sequence of $\mathscr P^{(N)}$-stopping times $T_N$  the sequence
$$ \frac{T_N - S^{N,T_N}}{f(N)\theta_N} $$
converges in law as $N\to \infty$ to an exponential random variable with rate parameter $1$.
\end{lem}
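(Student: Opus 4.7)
The plan is to identify $T_N - S^{N,T_N}$ as the extinction time of the process $Z_0^{(N)}$ started from the initial state $N$, and then transfer from this deterministic initial condition to the quasi-stationary distribution using Lemmas \ref{lem_coupling1} and \ref{exp_rate}. First I would invoke the backward strong Markov property at the $\mathscr P$-stopping time $T_N$, combined with the translation invariance of the Poisson point processes $(\mathcal C, \mathcal S, \mathcal M)$ and Lemma \ref{fromAtoZ}. These together imply that $r \mapsto A^{T_N}_{T_N-r}(0)$ is Markovian with the same law as $Z_0^{(N)}$ started from $N$ (since at $r=0$ all $N$ individuals carry load $0$). Consequently $T_N - S^{N,T_N}$ is equidistributed with $H_0^{(N)}$ under the law started from $Z_0^{(N)}(0) = N$, and in particular its distribution does not depend on the choice of $T_N$.

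Next I would split the dynamics into a burn-in phase and a quasi-stationary decay phase. Fix $\varepsilon > 0$ and let $r_N := C_0(\varepsilon) f(N) \ln N$ with $C_0(\varepsilon)$ from Lemma \ref{lem_coupling1}. Applying that lemma with $k = 0$ and initial state $N \ge 1/\varepsilon$, with probability at least $1 - \delta(\varepsilon)$ one has $f(N) Z_0^{(N)}(r_N)/N \in [\bar n_0 - C_0\varepsilon, \bar n_0 + C_0\varepsilon]$, so that $Z_0^{(N)}(r_N)$ is of order $N/f(N)$ and in particular survives. Crucially, since \eqref{order_time_ext} gives $\ln \theta_N \sim 2(\alpha - \mu + \mu \ln(\mu/\alpha)) N/f(N)$ and the standing assumption $f(N) = o(N/\ln \ln N)$ forces $\ln N = o(\theta_N)$, the burn-in time satisfies $r_N/(f(N)\theta_N) \to 0$ and is therefore negligible on the scale at which we are measuring.

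To conclude I would exploit Lemma \ref{exp_rate}, which gives $H_0^{(N)}/(f(N)\theta_N) \sim \mathrm{Exp}(1)$ exactly when the starting distribution is $\nu_N$. Writing, via the strong Markov property,
\[
\mathbb{P}_N\!\left(H_0^{(N)} > r_N + t f(N)\theta_N\right) = \mathbb{E}_N\!\left[\mathbf{1}_{\{Z_0^{(N)}(r_N) > 0\}} \,\mathbb{P}_{Z_0^{(N)}(r_N)}\!\left(H_0^{(N)} > t f(N)\theta_N\right)\right],
\]
it remains to show that the inner probability is close to $e^{-t}$ uniformly over $z$ in the concentration set identified in Step 2. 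The cleanest route is via Yaglom-type convergence of $Z_0^{(N)}$ to its QSD, checking the hypotheses of \cite{champagnat2023general} for our logistic birth-death chain; alternatively, one can run a coupling of $Z_0^{(N)}$ started from two different large initial states (one of them $\nu_N$-distributed), mimicking the merging argument of Proposition \ref{lem_merging1}, and conclude that the two hitting times agree to leading order on the $f(N)\theta_N$ scale.

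The main obstacle I anticipate is precisely this last step: quantifying the convergence to quasi-stationarity on the intermediate scale $f(N)\ln N$, which is much shorter than the extinction scale $f(N)\theta_N$. Lemmas \ref{lem_coupling1} and \ref{lem_fast_ext} already give the concentration and non-degeneracy needed to apply a general Yaglom-type theorem, so the hard work is essentially to verify the hypotheses in \cite{champagnat2023general} (a Lyapunov-type return-to-compact-set condition plus uniform positivity of the transition kernel on the compact set), rather than to re-prove convergence from scratch.
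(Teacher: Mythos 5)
Your proposal is correct and follows essentially the same route as the paper: identify $\bigl(A^{T_N}_{T_N-r}(0)\bigr)_{r\ge 0}$ as $Z_0^{(N)}$ started from $N$, use Lemma \ref{lem_coupling1} to absorb the burn-in of order $f(N)\ln N$, observe that \eqref{order_time_ext} together with $f(N)=o(N/\ln\ln N)$ makes this negligible against $f(N)\theta_N$, and conclude from Lemma \ref{exp_rate}. The only point you treat more explicitly is the transfer from an initial state in the concentration window to the quasi-stationary distribution $\nu_N$; the paper handles this by exactly the coupling you describe as your second alternative (it is carried out inside the proof of Lemma \ref{exp_rate}, where a process started from a state of order $N/f(N)$ is coupled with one started from $\nu_N$ so that they coincide after a time of order $f(N)\ln N$), so no appeal to a general Yaglom-type theorem is needed.
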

\begin{proof}
The process $Z_0^{(N)}(r) := A^{T_N}_{T_N-r}(0)$, $r \ge 0$,  has the jump rates~\eqref{Z0rates} and starts in~$N$. Lemma \ref{lem_coupling1} shows that the quasi-equilibrium of $Z^{(N)}_0$ builds up within a time of order $f(N)\ln N$ when started in $Z^{(N)}_0(0) =N$.
Let now $(\theta_N)$ be as in Lemma 6.4. Since this $(\theta_N)$ obeys $(2.7)$ and $f(N)$ satisfies  $f(N) = o\left(\frac N{\ln \ln N}\right)$, we conclude from (2.7) that $\ln \theta_N \gg \ln \ln N$, and hence $f(N) \ln N \ll f(N) \theta_N$.
 The asymptotic exponentiality of $T_N - S^{N,T_N}$ with the claimed time scaling thus follows from Lemma \ref{exp_rate}. \end{proof}

Let us now consider a sequence of (deterministic) times $u_N$ as in Proposition \ref{thm48} and recall the definition of $S^{N, T}$ in \eqref{def_SN}. For each fixed $N\in \mathbb N$ define recursively
\begin{eqnarray}\label{defSN}
\begin{split}  S_{0}^{N,u_N}&:= u_N, \\
 S_{\ell}^{N,u_N}&:= S^{N, S_{\ell-1}^{N,u_N}}, \quad \ell = 1, 2, \ldots 
  \end{split}
  \end{eqnarray}
The following corollary is now immediate from Lemma \ref{asexp}.
\begin{cor}\label{asexpcor}
Let $S_{\ell}^{N,u_N}$ be defined by \eqref{defSN}.\\
a) The sequences 
$$ \left(\frac{S_{\ell-1}^{N,u_N}- S_{\ell}^{N,u_N}}{f(N)\theta_N}\right)_{\ell \ge 1} $$
converge as $N\to \infty$ in the sense of finite dimensional distributions to a sequence of i.i.d. standard exponential random variables.
\\
b) Let $C$ be an arbitrary positive constant. The sequence of point processes
$$ \mathscr S^{N,u_N} := \left\{
\frac{S_{\ell}^{N,u_N}}{f(N)\theta_N}: \ell \in \mathbb N_0
\right\},\quad N=1,2,\ldots$$
converges, when restricted to  $[0,C]$ in distribution to a standard Poisson point process restricted to $[0,C]$.
\end{cor}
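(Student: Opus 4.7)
The plan is to read $(S_\ell^{N,u_N})_{\ell\ge 0}$ as a ``renewal process running backward in time from $u_N$'' and to leverage Lemma~\ref{asexp} together with a strong Markov argument in the backward filtration $\mathscr P$.

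\textbf{Part (a).} I would first verify by induction on $\ell$ that each $S_\ell^{N,u_N}$ is a $\mathscr P^{(N)}$-stopping time: for $\ell=0$ this is trivial, while for the inductive step the event $\{S^{N,T}\ge t\}=\{A_s^T(0)>0 \text{ for all } s\in[t,T]\}$ is determined by the graphical elements in $[t,T]$ and hence lies in $\mathscr P_t$ whenever $T$ is itself a $\mathscr P$-stopping time. The decisive step is then the strong Markov property applied backward in time to the translation-invariant Poisson processes $(\mathcal C,\mathcal S,\mathcal M)$: conditionally on $\mathscr P_{S_{\ell-1}^{N,u_N}}$, the restrictions of $(\mathcal C,\mathcal S,\mathcal M)$ to $(-\infty,S_{\ell-1}^{N,u_N}]$ are independent of $\mathscr P_{S_{\ell-1}^{N,u_N}}$ and equidistributed, after a time shift, with the original processes on $(-\infty,u_N]$. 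Since $S_{\ell-1}^{N,u_N}-S_\ell^{N,u_N}$ is a time-shift-invariant functional of those restrictions, its conditional law given $\mathscr P_{S_{\ell-1}^{N,u_N}}$ coincides with the unconditional law of $u_N-S^{N,u_N}$. Thus, for each fixed $N$, the increments $\mathcal E_\ell^N:=(S_{\ell-1}^{N,u_N}-S_\ell^{N,u_N})/(f(N)\theta_N)$ are genuinely i.i.d., and Lemma~\ref{asexp} applied to $T_N=u_N$ identifies their common limit law as $\mathrm{Exp}(1)$, from which the finite-dimensional convergence in (a) is immediate.

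\textbf{Part (b).} Setting $a_N:=u_N/(f(N)\theta_N)\to\infty$ and $T_\ell^N:=\mathcal E_1^N+\cdots+\mathcal E_\ell^N$, the normalized values $S_\ell^{N,u_N}/(f(N)\theta_N)=a_N-T_\ell^N$ form a renewal process reflected at $a_N$, and $\mathscr S^{N,u_N}\cap[0,C]$ is the image under this reflection of the renewal epochs falling in $[a_N-C,a_N]$. To deduce convergence to a standard Poisson process on $[0,C]$, I would verify Kallenberg's criterion for simple point processes, namely
\[
\E\bigl[\mathscr S^{N,u_N}([a,b])\bigr]\to b-a,\qquad \P\bigl(\mathscr S^{N,u_N}([a,b])=0\bigr)\to e^{-(b-a)}
\]
for all $0\le a<b\le C$. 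The first convergence is the elementary renewal theorem applied to a window of width $b-a$ near $a_N$, using $\E[\mathcal E_1^N]\to 1$; the second is a Blackwell / key-renewal statement: the forward-excess at level $a_N-b$ converges in law to the stationary residual distribution of the inter-arrival law, which is $\mathrm{Exp}(1)$ in the limit, so the probability of no renewal within the next $b-a$ units tends to $e^{-(b-a)}$.

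The main obstacle is the double limit $N\to\infty$ together with $a_N\to\infty$, since renewal theorems are usually stated for a single fixed inter-arrival law. I would bypass this via a truncation: for large $K$, use Part~(a) to show that the joint law of the last $K$ increments $(\mathcal E_{L_N-K+1}^N,\ldots,\mathcal E_{L_N}^N)$ with $L_N:=\lfloor a_N\rfloor$ converges to $K$ i.i.d.\ $\mathrm{Exp}(1)$'s; apply the law of large numbers (uniformly because $\E[\mathcal E_1^N]\to 1$) to show $T_{L_N-K}^N$ is concentrated around $a_N-K$, so that with probability approaching $1$ as $K\to\infty$ only the last $K$ renewals can fall in $[a_N-C,a_N]$; and read off the avoidance and intensity probabilities as those of a standard Poisson process by letting $K\to\infty$.
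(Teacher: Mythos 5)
Your part a) is correct and is essentially the argument the paper intends: the remark preceding Lemma \ref{asexp} (that for fixed $N$ the law of $T-S^{N,T}$ is the same for every $\mathscr P^{(N)}$-stopping time $T$, cf.\ Lemma \ref{fromAtoZ}) together with the strong Markov property of $(\mathcal C,\mathcal S,\mathcal M)$ in the backward filtration makes the increments in \eqref{defSN} \emph{exactly} i.i.d.\ for each fixed $N$, and Lemma \ref{asexp} identifies the limiting marginal, whence the finite-dimensional convergence. Two small slips that do not affect the conclusion: $\{S^{N,T}\ge t\}$ is the event that the load-zero ASG of $[N]\times\{T\}$ dies out at some time in $[t,T]$, i.e.\ the \emph{complement} of the event you wrote (both lie in $\mathscr P_t$, so the stopping-time property survives); and the restriction of the Poisson processes to $(-\infty,S^{N,u_N}_{\ell-1}]$ is of course not independent of $\mathscr P_{S^{N,u_N}_{\ell-1}}$ --- what you need is that the time-shifted restriction to the strict past of the stopping time is independent of the pre-$S_{\ell-1}$ $\sigma$-algebra and distributed as a fresh copy.

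In part b) there is a genuine gap. You are right that the window $[a_N-C,a_N]$ sits at distance $a_N\to\infty$ from the origin of the renewal process, so that some Blackwell-- or key-renewal--type input in a double limit is required (the paper simply declares the corollary immediate, so you are filling a real omission). But the truncation you propose does not supply it: the partial sum $T^N_{L_N-K}$ is \emph{not} concentrated around $a_N-K$ within $O(1)$ --- even if $\E[\mathcal E^N_1]$ were exactly $1$, its standard deviation is of order $\sqrt{L_N}\sim\sqrt{a_N}\to\infty$, so the indices of the renewal epochs landing in $[a_N-C,a_N]$ differ from $L_N$ by $O(\sqrt{a_N})$ and the window is in no way covered by $T^N_{L_N-K+1},\dots,T^N_{L_N}$. (Also note that $\E[\mathcal E^N_1]\to 1$, which you invoke for the elementary renewal theorem and the law of large numbers, does not follow from the distributional convergence in part a) without a uniform integrability argument.) Since the increments are i.i.d., the indices are irrelevant; the quantity that must be controlled is the overshoot $T^N_{\nu_N}-(a_N-C)$ over the level $a_N-C$, with $\nu_N:=\min\{\ell:\,T^N_\ell>a_N-C\}$. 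Once its law is shown to converge to $\mathrm{Exp}(1)$, the conditional i.i.d.\ structure of the subsequent gaps yields the Poisson limit on $[0,C]$ exactly as you describe; proving that overshoot convergence \emph{uniformly in $N$} is precisely what the truncation was meant to replace and is the only non-trivial content of b). A route adapted to this model is to use that, by Lemma \ref{exp_rate} and the coupling into quasi-stationarity within a time $O(f(N)\ln N)\ll f(N)\theta_N$, each increment coincides with an exact $\mathrm{Exp}(1/(f(N)\theta_N))$ variable after a burn-in that is negligible on the $f(N)\theta_N$ scale, and then to exploit memorylessness when crossing the level $a_N-C$; even there one must take care that the $o(1)$ coupling failure probabilities do not accumulate over the $\sim a_N$ increments preceding the window.
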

\begin{proof}[Proof of Proposition \ref{thm48}] From Definition \ref{backwardclicks} we recall the point process  $\mathscr T^{N,u_N}$ of click times on the ASG back from $[N]\times \{u_N\}$. The strategy of the proof will be to compare this process ``locally on the $f(N)\theta_N$-timescale'' to the process $\mathscr  S^{N,u_N}$ which on that scale according to Corollary \ref{asexpcor} is approximately Poisson.

To this purpose we define for each  $N\in \N$ and each time point $t > 0$ 
$$ S^{N,u_N}(t):= \min  \left(\mathscr  S^{N,u_N} \cap [t,\infty)\right), \qquad  \hat T^{N,u_N}(t):= \min \left(\mathscr  T^{N,u_N} \cap [t,\infty)\right), $$
Let $\bar{\mathscr A}^{T}_t$ be the set of minimum load potential ancestors at time $t$ of the population at some (deterministic or random) time $T$, as specified in Definition \ref{colouredASG} .  For abbreviation we put
\begin{equation*} \mathscr B_t^N:= \bar{\mathscr{A}}^{S^{N,u_N}_{}(t)}_t.
\end{equation*}
For any fixed $C > 0$ we abbreviate $t_N:=C f(N)\theta_N$. We will use the following properties (where always $\delta(\varepsilon) \to 0$ as $\varepsilon \to 0$):
\begin{itemize}
\item The process $(\#\bar{\mathscr A}^{u_N}_{u_N-r},\, r \geq 0)$ follows the dynamics of $Z_{\bar k} = (Z_{\bar k(r)}(r))_{r\ge 0}$, where
\begin{equation} \label{bark}\bar k(r) := \bar k_N(r) := \min\left\{k: Z_k^{(N)}(r) > 0\right\}.
\end{equation} 
 According to Lemmata \ref{exp_rate} and \ref{lem_fast_ext}
$$ \liminf_{N \to \infty}\P\left(\#\bar{\mathscr A}^{u_N}_{t_N}\geq \frac{1}{\eps} \right) = 1-\delta(\eps). $$
\item A similar reasoning yields
$$ \liminf_{N \to \infty}
\P
\left(\#\,
\mathscr B_{t_N}^N
\geq \frac{1}{\eps} \right) = 1-\delta(\eps). $$
\item Take a sequence $(v_N)$ such that $f(N)\ln N \ll v_N \ll f(N) \theta_N$. Then according to part~$a)$ of Corollary \ref{asexpcor}, 
$$\limsup_{N\to \infty} \P \left([t_N-v_N, t_N] \cap  f(N)\theta_N  \mathscr S^{N,u_N} = \emptyset\right)=1. $$
\item Finally, according to Proposition \ref{lem_merging1} (on the quick merging of load zero ASG's), 
$$\liminf_{N \to \infty} \P \left(\mathscr{C}_0^{\bar{\mathscr A}^{u_N}_{t_N}, \,\mathscr B_{t_N}^N}\geq t_N-v_N \Big|\#\bar{\mathscr A}^{u_N}_{t_N} \geq 1/\eps \mbox{ and }\#\mathscr B_{t_N}^N\geq 1/\eps \right)\ge 1-\delta(\eps).
$$\end{itemize}
From these facts we deduce that
$$ \lim_{N \to \infty} \P \left( \hat T^{N,u_N}(t_N)=S^{N,u_N}(t_N) \right)=1. $$
We proceed in a similar way to cover the time frame $[0,t_N]$, which contains a random number of points of $\mathscr S^{N,u_N}$ that has a finite expectation. We thus add a sum  of errors that converges to $0$ as $N\to \infty$, which allows us to conclude the proof.
\end{proof}

\section{Click rates: Proof of Theorem \ref{thmclicks}}\label{secclickrates}
The next lemma relates the click times of the ratchet, defined as the jump times of the process $K^\ast_N$ given by \eqref{besttype}, to the times $T_{g}^{N,u_N}$ obtained from the point process $\mathscr{T}^{N,u_N}$ of backward click times, see Definition \ref{backwardclicks} and Proposition \ref{thm48}. As will become clear from the following proof, each time  $T_{g}^{N,u_N}$ with high probability 'announces' a click time of the ratchet, with the difference between those two times tending to zero in probability as $N\to \infty$ on the $f(N)\theta_N$-scale.
\begin{lem}\label{lem10_1}
For any $\tau\ge 0$,
\begin{equation}\label{couplingbackforw}
 \P \left( K^*_N\left(f(N)\theta_N\tau\right)=
\sum_{g \geq 1}\mathbf{1}_{\{T_{g}^{N,u_N} \le f(N)\theta_N \tau\}}  \right) \to 1 \quad \text{as} \quad N \to \infty. 
\end{equation}
\end{lem}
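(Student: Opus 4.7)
The plan is to reduce \eqref{couplingbackforw} to the purely pathwise statement that every jump of the non-decreasing integer-valued process $K_N^*$ in $(0,f(N)\theta_N\tau]$ is of size exactly one, and then to establish this via the renewal structure of the dual $Z$-process together with the Poisson tightness of Proposition~\ref{thm48}.

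First I would establish the deterministic pathwise identity
\[
L(t):=d_{\mathcal M}\bigl(\mathscr A^{u_N}_t,\,[N]\times\{u_N\}\bigr)=K_N^*(u_N)-K_N^*(t),\qquad 0\le t\le u_N.
\]
The "$\ge$" direction is the triangle inequality for $d_{\mathcal M}$ between the three time slices $0<t<u_N$ combined with $K_N^*(0)=0$. For "$\le$", I take a minimum-load path realising $K_N^*(u_N)$, split it at time $t$ into an initial segment ending at some $(k^*,t)$ and a terminal segment: by \eqref{etadist} the initial segment has load at least $\eta(k^*,t)\ge K_N^*(t)$, so the terminal segment has load at most $K_N^*(u_N)-K_N^*(t)$ and witnesses the desired bound (minimising $d_{\mathcal M}$ over $[N]\times\{t\}$ equals minimising over $\mathscr A^{u_N}_t$ because $d_{\mathcal M}$ is infinite from outside the ASG). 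Since $L$ is non-increasing and $K_N^*$ non-decreasing as $t$ increases, their jump times and jump magnitudes coincide. By Definition~\ref{backwardclicks} the $T_g^{N,u_N}$'s are the distinct jump times of $L$ in $(0,u_N]$, so the right-hand side of \eqref{couplingbackforw} equals the \emph{number} of jumps of $K_N^*$ in $(0,f(N)\theta_N\tau]$, while the left-hand side equals the \emph{sum of the jump sizes} on that interval. Hence \eqref{couplingbackforw} is equivalent to showing that with probability tending to one every such jump has size exactly one.

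To prove this I would switch to the backward picture. By Lemma~\ref{fromAtoZ}, the process $\bigl(Z^{(N)}_k(r)\bigr)_{k\in\N_0,r\ge 0}:=\bigl(\#\mathscr A^{u_N}_{u_N-r}(k)\bigr)$ starts from $(N,0,0,\ldots)$ with rates \eqref{ratescoal}--\eqref{ratesmutfeeding}. A jump of $K_N^*$ of size $\Delta\ge 2$ at a forward click occurs precisely when the lowest occupied level $Z_{\underline k}$ hits $0$ while $Z_{\underline k+1}$ is \emph{also} zero at that instant. A direct check of the rate cancellations in \eqref{ratescoal}--\eqref{ratesmutfeeding} shows that once $Z_0$ is absorbed at $0$, the dynamics of $Z_1$ coincides with the original $Z_0$-dynamics \eqref{Z0rates} (and an analogous statement holds at each later extinction after relabelling). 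Hence by the strong Markov property, iterating Lemma~\ref{lem_fast_ext}(a) along the successive extinctions yields: at the $g$-th extinction, conditionally on the current "level $0$" having started with size $\ge 1/\eps$ (which is propagated from the previous step because $C/\eps^2\ge 1/\eps$ for small $\eps$), the next level has size at least $C/\eps^2>0$ with probability at least $1-\delta(\eps)$, so that $\underline k$ increases by exactly one.

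Finally, I would union-bound over the successive extinctions. By Proposition~\ref{thm48}, the number of backward clicks in $[0,f(N)\theta_N\tau]$ converges in law to Poisson$(\tau)$ and is therefore tight: for every $\eta>0$ one can fix $M=M(\tau,\eta)$ with $\P(\text{more than }M\text{ clicks})<\eta$ uniformly in large $N$. Combining with the previous paragraph,
\[
\P\bigl(\text{some jump of }K_N^*\text{ in }(0,f(N)\theta_N\tau]\text{ is of size }\ge 2\bigr)\le M\delta(\eps)+\eta,
\]
and letting first $\eps\to 0$ and then $\eta\to 0$ concludes. The main obstacle I foresee is organising this iteration cleanly: one has to invoke the strong Markov property at each extinction, verify the rate-cancellation that legitimises applying Lemma~\ref{lem_fast_ext}(a) to the shifted process, and propagate the conditioning without losing more than $\delta(\eps)$ per step; Lemma~\ref{lem_coupling1} and Proposition~\ref{lem_merging1} enter as safety nets that control how quickly the shifted "level $0$" relaxes to its quasi-equilibrium of size $\sim N/f(N)\gg 1/\eps$ between consecutive extinctions.
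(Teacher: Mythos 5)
Your reduction rests on the claimed pathwise identity $d_{\mathcal M}\bigl(\mathscr A^{u_N}_t,[N]\times\{u_N\}\bigr)=K_N^*(u_N)-K_N^*(t)$, and the ``$\ge$'' half of that identity is false. The argument you give for it invokes a triangle inequality between the three time slices, but $d_{\mathcal M}$ between \emph{sets} is a minimum over pairs, and the minimising individual at time $t$ for the segment $[0,t]$ (a best-type individual) need not coincide with the minimising individual at time $t$ for the segment $[t,u_N]$; so $d_{\mathcal M}([N]\times\{0\},[N]\times\{u_N\})\le d_{\mathcal M}([N]\times\{0\},[N]\times\{t\})+d_{\mathcal M}([N]\times\{t\},[N]\times\{u_N\})$ does not follow. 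A concrete counterexample with $N=2$: start both lines at type $0$, let line $2$ mutate before time $t$ and line $1$ mutate between $t$ and $u_N$, with no arrows at all. Then $K_N^*(t)=0$, $K_N^*(u_N)=1$, but the mutation-free path along line $2$ gives $d_{\mathcal M}([N]\times\{t\},[N]\times\{u_N\})=0$. Only your ``$\le$'' direction (the path-splitting argument) is a valid deterministic statement; it yields the easy inequality $K_N^*(t_N)\le g^*$, which is also how the paper starts. The reverse inequality is exactly the hard content of the lemma and is genuinely probabilistic: Figure \ref{fig:boat3} of the paper is devoted to the point that forward and backward clicks do \emph{not} coincide pathwise. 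The paper closes this gap by showing that $\bar{\mathscr A}^{u_N}_{t_N}$ is large w.h.p.\ (Lemma \ref{lem_fast_ext}b)) and then invoking Proposition \ref{lem_merging1} so that, for each $k$, the load $\le k$ ASGs of $\bar{\mathscr A}^{u_N}_{t_N}$ and of the whole population $[N]\times\{t_N\}$ merge within $O(f(N)\ln N)$; this transfers the lower bound $d_{\mathcal M}\bigl([N]\times\{T^{N,u_N}_{g^*-k}-\delta\},\bar{\mathscr A}^{u_N}_{t_N}\bigr)\ge k+1$ into the same bound with $[N]\times\{t_N\}$ as target, giving $K_N^*(t_N)\ge g^*$ w.h.p. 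Your proposal contains no substitute for this merging step.

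Your subsequent analysis (the extinction of $Z_{\underline k}$ feeding $Z_{\underline k+1}$, iteration of Lemma \ref{lem_fast_ext}a) with the strong Markov property, and a union bound over the tight Poisson number of clicks from Proposition \ref{thm48}) is sound in spirit, but it only controls the \emph{backward} click process, i.e.\ it shows that the jumps of $t\mapsto d_{\mathcal M}(\mathscr A^{u_N}_t,[N]\times\{u_N\})$ are simple w.h.p. Since the bridge from backward to forward clicks is exactly what is missing, this does not yet say anything about the jump sizes of $K_N^*$.
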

\begin{proof}
Since $K_N^\ast(0)=0$ and $T_g^{N,u_N} > 0$ a.s. for  $g > 0$, we need only to consider the case $\tau > 0$. 
For abbreviation we put $t_N:= f(N)\theta_N^{} \tau$.
Let us recall the definition in~\eqref{min_load_anc} of the set $\bar{\mathscr A}^{[N] \times \{u_N\}}_{t_N}$, abbreviated as $\bar{\mathscr A}^{u_N}_{t_N}$. 
Let $g^*$ be such that 
$$ T_{g^*}^{N,u_N}\leq t_N < T_{g^*+1}^{N,u_N}. $$
Then one readily observes
$$  K^*_N\left(t_N\right) = d_\mathcal{M}\left([N]\times\{0\}, [N]\times\{t_N\} \right) \le d_\mathcal{M}\left([N]\times\{0\}, \bar{\mathscr A}^{u_N}_{t_N} \right) \le g^*. $$
Let us now prove that 
\begin{equation}\label{conversequ}
\P(K_N^\ast(t_N) \ge g^*) \to 1 \quad \mbox{ as } N \to \infty.
\end{equation}
By definition we then have for all $k \in \N_0$ and all $\delta > 0$
\begin{equation}\label{dMestimate}
d_{\mathcal M}\left([N]\times \{T_{g^\ast-k}^{N,u_N}-\delta\},  \bar{\mathscr{A}}^{u_N}_{t_N}\right) \ge k+1.
\end{equation}
Abbreviating $\mathscr J_{t_N}^1 := \bar{\mathscr A}^{u_N}_{t_N}$ and $\mathscr J_{t_N}^2:= [N]\times \{t_N\}$, we observe that $\mathscr J^1_{t_N}$ has the same distribution as $Z_{\underline k_N}^{(N)}(u_N-t_N)$ defined in Lemma \ref{lem_fast_ext}b), which according to that Lemma becomes large with high probability as $N\to \infty$. We thus obtain from Proposition~\ref{lem_merging1} that for all $k \ge 0$ the sets
$$ \mathscr{A}^{\mathscr J_{t_N}^1}(0) \cup ... \cup \mathscr{A}^{\mathscr J_{t_N}^1}(k) \quad \text{and} \quad \mathscr{A}^{\mathscr J_{t_N}^2}(0) \cup ... \cup \mathscr{A}^{\mathscr J_{t_N}^2}(k) $$
merge with high probability within a time of order $f(N) \ln N$. This together with \eqref{dMestimate} allows to conclude that for all $k \ge 0$
$$d_{\mathcal M}([N]\times  \{ T_{g^*-k}^{N,u_N}-\delta\}, [N]\times \{t_N\}) \ge k+1$$
 with high probability as $N \to \infty$.
Applying this to (up to the random) $k:= g^*-1$ shows
$$K_N^*(t_N) = d_{\mathcal M}([N]\times  \{ T_{1}^{N,u_N}-\delta\}, [N]\times \{t_N\}) \ge g^*$$
 with high probability as $N \to \infty$. This concludes the proof of \eqref{conversequ}, and thus also of \eqref{couplingbackforw}. 
\end{proof}
Together with Lemma \ref{lem10_1}, Proposition \ref{thm48}  implies that, as $N\to \infty$, the sequence of processes 
$\displaystyle\left( K^*_N\left(f(N)\theta_N\tau \right)  \right)_{\tau\ge 0}$
converges in distribution to a rate 1 Poisson counting process. This is the assertion of Theorem \ref{thmclicks}.

 \section{Meeting best-type individuals when coming from the future}\label{encounter}
 The following lemma says, roughly spoken, that at generic, suitably large times $t_N$ the minimum load ASG (which is a backward in time construction coming from the far future) with high probability not only is appreciably large but also contains an individual whose type is best among the total population at time $t_N$. This lemma is a building block in the proof of Theorem \ref{thmprofile}a) that will be carried out in Section \ref{YuleinASG}.
\begin{lem}\label{bestinASG2} Let  $(t_N)$ be as in \eqref{condtN} and  $(u_N)$ be such that $(u_N-t_N)/(f(N)\theta_N ) \to \infty$ as $N\to \infty$. Let 
$\bar {\mathscr A}^{u_N}_{t_N}$ be the minimum load ASG at time $t_N$ of the total population at time $u_N$, as specified in Definition~\ref{colouredASG} . Then 
\begin{equation}\label{bestonAbar}
\mathbb P\left(\# \bar {\mathscr A}^{u_N}_{t_N} >h(N); \,   \exists v \in \bar {\mathscr A}^{u_N}_{t_N}: \eta(v) = K^\ast_N(t_N)\right) \to 1 \quad \mbox{ as } N\to \infty,
\end{equation}
where $(h(N))$ is any sequence satisfying $1\ll h(N) \ll N/f(N)$.
\end{lem}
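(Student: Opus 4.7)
\emph{Proof plan.} The lemma contains two joint assertions: the size bound $\#\bar{\mathscr A}^{u_N}_{t_N} > h(N)$ and the existence of an individual of best type inside $\bar{\mathscr A}^{u_N}_{t_N}$. The plan is to combine the quasi-equilibrium size bounds from Section \ref{processZ} with a label-exchangeability-plus-hypergeometric argument. The size bound follows from the analysis already used in the proof of Proposition \ref{thm48}: the process $r\mapsto \#\bar{\mathscr A}^{u_N}_{u_N-r}$ has the law of $Z_{\bar k(r)}^{(N)}(r)$, which by Lemma \ref{lem_coupling1} (together with Lemmas \ref{exp_rate} and \ref{lem_fast_ext}) stays of order $\bar n_{\bar k(r)} N/f(N)$ between backward click times. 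Since $(u_N-t_N)/(f(N)\theta_N)\to\infty$, with high probability $r=u_N-t_N$ is not close to a backward click time, so $\#\bar{\mathscr A}^{u_N}_{t_N}$ is of order $N/f(N)$ and therefore exceeds $h(N)$, using $h(N)\ll N/f(N)$.

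For the existence claim, observe that $\bar{\mathscr A}^{u_N}_{t_N}$ is determined by $(\mathcal{C},\mathcal{S},\mathcal{M})|_{[t_N,u_N]}$, hence independent of $\mathscr F_{t_N}$, whereas $\mathcal T^* := \{i\in [N]:\eta(i,t_N)=K_N^*(t_N)\}$ is $\mathscr F_{t_N}$-measurable. Since the joint law of $(\mathcal{C},\mathcal{S},\mathcal{M})|_{[t_N,u_N]}$ is invariant under permutations of the label set $[N]$, so is the law of $\bar{\mathscr A}^{u_N}_{t_N}$ (both unconditionally and, by independence, conditionally on $\mathscr F_{t_N}$). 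Consequently, given $\mathscr F_{t_N}$ and its cardinality $M := \#\bar{\mathscr A}^{u_N}_{t_N}$, the set $\bar{\mathscr A}^{u_N}_{t_N}$ is uniformly distributed over the subsets of $[N]$ of size $M$, so that the overlap $|\bar{\mathscr A}^{u_N}_{t_N}\cap \mathcal T^*|$ is hypergeometric with parameters $(N,Y^*(t_N),M)$, where $Y^*(t_N):=|\mathcal T^*|$. Thus
\[
\P\bigl(|\bar{\mathscr A}^{u_N}_{t_N}\cap \mathcal T^*|=0\,\big|\,\mathscr F_{t_N},M\bigr)\le \bigl(1-Y^*(t_N)/N\bigr)^M,
\]
and it suffices to establish $Y^*(t_N)\ge cN$ with high probability for some $c>0$, since then the right-hand side is bounded by $(1-c)^{h(N)}\to 0$ on the event $\{M\ge h(N)\}$.

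To prove this lower bound on $Y^*(t_N)$, note first that by Theorem \ref{thmclicks} the forward click process is asymptotically Poisson of rate $(f(N)\theta_N)^{-1}$, so the probability of a click inside the window $[t_N-f(N)\ln N, t_N]$ is $O(\ln N/\theta_N)\to 0$. Conditional on no click in this window, the current best-type class has persisted for at least $f(N)\ln N$ time while evolving according to the jump rates \eqref{Y0rates}, whose drift attracts it towards the equilibrium $(1-\mu/\alpha)N$ on the time scale $f(N)$. A supercritical-branching / logistic-growth comparison starting from $Y^*\ge 1$ at the last pre-$t_N$ click and conditioning on non-extinction then yields $Y^*(t_N)\ge (1-\mu/\alpha)N/2$ with high probability. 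The main obstacle is exactly this forward equilibration: one must control the (random, possibly small) restart value of $Y^*$ at each click and quantify the supercritical relaxation to $\Theta(N)$ within the budget $O(f(N)\ln N)$ available between the last pre-$t_N$ click and $t_N$. Once this is secured, the exchangeability-plus-hypergeometric argument above delivers both assertions of the lemma simultaneously.
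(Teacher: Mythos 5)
Your proposal has three components: (i) the size bound via the dual hierarchy, (ii) an exchangeability/hypergeometric reduction of the existence claim, and (iii) a forward-in-time lower bound $Y^\ast(t_N)\ge cN$ on the size of the current best class. Part (i) agrees with the paper (which invokes Lemma \ref{fromAtoZ} and Lemma \ref{lem_fast_ext}b)). Part (ii) is correct and is a genuinely different idea: $\bar{\mathscr A}^{u_N}_{t_N}$ is $\mathscr P_{t_N}$-measurable, hence independent of $\mathscr F_{t_N}$, its law is invariant under permutations of $[N]$, and therefore, given its cardinality $M$ and given $\mathscr F_{t_N}$, its overlap with the best-type class is hypergeometric, yielding the bound $(1-Y^\ast(t_N)/N)^{M}$ for an empty intersection. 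If $Y^\ast(t_N)\ge cN$ were available, this would finish the proof cleanly.

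The genuine gap is step (iii), which you flag as ``the main obstacle'' but do not resolve, and it is not a routine verification. First, there is a near-circularity: $Y^\ast(t_N)/N = X_0^{(N)}(t_N)\to p_0$ is precisely the $k=0$ case of Theorem \ref{thmprofile}a), and the present lemma is a building block of that theorem's proof; so (iii) must be established by an independent forward analysis. Second, that forward analysis is substantial and cannot be borrowed from Section \ref{processZ}: the forward best-class process has the rates \eqref{Y0rates}, which are only in hypergeometric \emph{duality} with the rates \eqref{Z0rates} of $Z_0^{(N)}$ (its equilibrium is at $(1-\mu/\alpha)N$, not $N/f(N)$), so Lemmas \ref{lem_coupling1} and \ref{lem_fast_ext} do not transfer. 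One would have to control the restart value of $Y^\ast$ at each forward click (the size of the former second-best class at a stopping time of $\mathscr F$, which requires a forward quasi-equilibrium statement for the pair of the two best classes, i.e.\ a forward analogue of Lemma \ref{lem_fast_ext}a)), and the phrase ``conditioning on non-extinction'' hides a delicate interaction between the supercritical comparison and the very definition of the click times. The paper avoids all of this by staying backward in time: it applies Corollary \ref{mergingcor} to $\bar{\mathscr A}^{u_N}_{t_N}$ and $[N]\times\{t_N\}$ so that their untyped ASGs merge by time $t_N-\lambda_N$ with $f(N)\ln N\ll\lambda_N\ll f(N)\theta_N$, whence any best-type individual $(i,t_N)$ has all its potential ancestors at time $t_N-\lambda_N$ inside those of $\bar{\mathscr A}^{u_N}_{t_N}$; combined with the absence of a backward click in that window (Proposition \ref{thm48}) this forces $\min\{\eta(v):v\in\bar{\mathscr A}^{u_N}_{t_N}\}=K^\ast_N(t_N)$. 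To complete your route you would either need to supply the missing forward analysis in full, or replace step (iii) by this merging argument.
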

\begin{proof}
Let $\lambda_N$ be such that $ f(N)\ln N \ll \lambda_N \ll f(N)\theta_N \wedge t_N$ and $t_N-\lambda_N \ll f(N) \theta_N$.
From Lemma \ref{fromAtoZ} we know that the law of the process $\big( \# \bar{\mathscr{A}}^{u_N}_{u_N-r}\big)_{r \geq 0}$ is the same as the law of 
the process  $\big(Z^{(N)}_{\bar k(r)}(r)\big)_{r \geq 0}$ studied in Section \ref{processZ} (defined in \eqref{bark}), with initial state $(N,0,...,0,...)$. Thus from  Lemma \ref{lem_fast_ext}b)  we obtain that the distribution of $\#\bar{\mathscr{A}}^{u_N}_{u_N-s}$ at a given time $s \gg f(N)\theta_N$ is  of order $N/f(N)$ with a probability close to one. We may thus apply Corollary \ref{mergingcor} with $\mathscr J_{t_N}^1 :=\bar{\mathscr{A}}^{u_N}_{t_N}$ and $\mathscr J_{t_N}^2 := [N]\times\{t_N\}$ to obtain 
\begin{equation}\label{mergingstatement2} \lim_{N \to \infty} \P \left(\mathscr{C}^{\bar{\mathscr{A}}^{u_N}_{t_N}, [N]\times\{t_N\}}\geq t_N-\lambda_N \right)=1. 
\end{equation}
Take an individual $(i,t_N)$ belonging to the best class at time $t_N$, that is to say 
\begin{equation}\label{eq1} \eta(i,t_N)=K_N^\ast(t_N)=:g^*.
\end{equation}
On the event $\mathscr E_N^{(1)}:=\{\mathscr{C}^{\bar{\mathscr{A}}^{u_N}_{t_N}, [N]\times\{t_N\}}\geq t_N-\lambda_N\}$ we have
$$\mathscr A^{i,t_N}_{t_N-\lambda_N} \subset \mathscr A^{\bar{\mathscr{A}}^{u_N}_{t_N}}_{t_N-\lambda_N}=: \mathscr A_N,$$
and consequently also 
\begin{equation*}
 \eta(i,t_N) \ge \min\left\{\eta(v): v\in \mathscr A^{i,t_N}_{t_N-\lambda_N} \right\} \ge \min\left\{\eta(v): v\in \mathscr A_N \right\}= d_{\mathcal M}\left( \mathscr A_0^{\mathscr A_N}, \mathscr A_N\right).
\end{equation*} 
Now consider the event that there is no click on the ASG between times $t_N-\lambda_N$ and $t_N$, i.e.
$$ \mathscr E_N^{(2)}:= \left\{T_{g^*}^{N,u_N}\leq t_N-\lambda_N \leq t_N < T_{g^*+1}^{N,u_N}\right\}. $$
On this event we have
\begin{equation}\label{eq3}d_{\mathcal M}\left( \mathscr A_0^{\mathscr A_N}, \mathscr A_N\right) =d_\mathcal{M}\left( \mathscr{A}^{ \bar{\mathscr{A}}^{u_N}_{t_N}}_0,  \bar{\mathscr{A}}^{u_N}_{t_N} \right) = \min\left\{\eta(v): v \in \bar{\mathscr{A}}^{u_N}_{t_N}\right\} \ge  K_N^\ast(t_N),
\end{equation}
where the last equality and the last inequality hold by definition.
The chain of \mbox{(in-)}equalities \eqref{eq1}--\eqref{eq3} shows that on the event $\mathscr E_N^{(1)} \cap \mathscr E_N^{(2)}$
$$ \min\left\{\eta(v): v \in \bar{\mathscr{A}}^{u_N}_{t_N}\right\} =  K_N^\ast(t_N).$$
 Proposition \ref{thm48} and \eqref{mergingstatement2} ensure that $\mathbb P(\mathscr E_N^{(1)} \cap \mathscr E_N^{(2)}) \to 1$ as $N\to \infty$, which ends the proof.
 \end{proof}
The next result says that a suitably large set of individuals that live at a  time $\underline t_N \gg f(N)\ln N$, contains with high probability an individual that is of the best type among all individuals living at time $\underline t_N$ (or in other words, has type $K_N^\ast(\underline t_N)$).
 \begin{prop}\label{bigset}
 Let $(\underline t_N)$ obey \eqref{condtN}. For $\eps > 0$ let $\mathscr J^{(N)}_{\underline t_N}$ be a sequence of $\mathscr P_{\underline t_N}$-measurable subsets of $[N]\times\{\underline t_N\}$ with $\lim\limits_{N\to \infty} \P(\#\mathscr J^{(N)}_{\underline t_N} \ge 1/\eps) =1$. Then
 $$\liminf_{N\to \infty} \P\left(\min\left\{\eta^{(N)}(v):v\in \mathscr J^{(N)}_{\underline t_N}\right\}=K^\ast_N(\underline t_N)\right) = 1-\delta(\eps),$$
 with $\delta(\eps) \to 0$ as $\eps\to 0$.
 \end{prop}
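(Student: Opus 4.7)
The plan is to localize the argument to a random click-free window $[t_0,\underline t_N]$, where $t_0$ is the time of the last click of the ratchet strictly before $\underline t_N$ (with $t_0:=0$ if no such click has occurred), and then apply Proposition \ref{lem_merging1} at the easy level $k=0$ on that window. By Theorem \ref{thmclicks} the rescaled click process converges to a rate-$1$ Poisson counting process on the scale $f(N)\theta_N$, and the scaling \eqref{order_time_ext} together with the hypothesis $f(N)=o(N/\ln\ln N)$ forces $\theta_N \gg \ln N$; hence $\P(\underline t_N - t_0 \ge C(\eps) f(N)\ln N) \to 1$ for any fixed $C(\eps)$, the corner case $t_0=0$ being covered directly by hypothesis \eqref{condtN}.

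Writing $\mathscr J^1 := \mathscr J^{(N)}_{\underline t_N}$ and $\mathscr J^2 := [N]\times\{\underline t_N\}$, both are $\mathscr P_{\underline t_N}$-measurable and of size $\ge 1/\eps$ with probability tending to $1$. Proposition \ref{lem_merging1} applied with $T=\underline t_N$ and $k=0$ yields $\mathscr A^{\mathscr J^1}_t(0)=\mathscr A^{\mathscr J^2}_t(0)$ for all $t \le \underline t_N - C(\eps) f(N)\ln N$ with probability at least $1-\delta(\eps)$; intersected with the localization event, this gives in particular $\mathscr A^{\mathscr J^1}_{t_0}(0)=\mathscr A^{\mathscr J^2}_{t_0}(0)$.

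Set $k^* := K_N^*(\underline t_N) = K_N^*(t_0)$ (the equality holds since no click lies in $[t_0,\underline t_N]$), and let $B_N := \{v \in [N]\times\{\underline t_N\}: \eta(v)=k^*\}$. The flow-of-types identity $\eta(j,\underline t_N)=\min_{i \in [N]}\{\eta(i,t_0)+d_{\mathcal M}((i,t_0),(j,\underline t_N))\}$, combined with the trivial bound $\eta(i,t_0)\ge k^*$, shows that each $(j,\underline t_N) \in B_N$ admits some $(i,t_0)$ with $\eta(i,t_0)=k^*$ and $d_{\mathcal M}((i,t_0),(j,\underline t_N))=0$; such $(i,t_0)$ lies in $\mathscr A^{\mathscr J^2}_{t_0}(0)$ and is best-type at $t_0$. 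By the merging of the previous paragraph, the analogous intersection with $\mathscr A^{\mathscr J^1}_{t_0}(0)$ is also nonempty, yielding via a forward zero-mutation path an element $v' \in \mathscr J^1$ with $\eta(v') \le k^*$ and hence $\eta(v')=k^* = K_N^*(\underline t_N)$, i.e.\ $v' \in \mathscr J^1 \cap B_N$.

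The main obstacle lies in the joint control required in the localization step: one needs simultaneously $\underline t_N - t_0 \gg f(N)\ln N$ (to let the load-$0$ merging complete on the window) and no click in $[t_0,\underline t_N]$ (so the best-type class is preserved throughout). The logarithmic gap $\ln N \ll \theta_N$ afforded by \eqref{order_time_ext} is precisely what makes this double condition simultaneously available, and it sidesteps the otherwise delicate task of applying Proposition \ref{lem_merging1} at the random (and possibly unbounded) level $K_N^*(\underline t_N)$, whose $k$-dependence in the induction constants would be problematic when $\underline t_N/(f(N)\theta_N) \to \infty$.
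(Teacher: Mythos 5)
Your argument is correct, and it takes a genuinely different route from the paper's. The paper reaches into the far future: it fixes $u_N$ with $(u_N-\underline t_N)/(f(N)\theta_N)\to\infty$, invokes Lemma \ref{bestinASG2} to guarantee that the minimum-load ASG $\bar{\mathscr A}^{u_N}_{\underline t_N}$ is large and contains a best-type individual, merges the load-zero ASG of $\mathscr J^{(N)}_{\underline t_N}$ with that of $\bar{\mathscr A}^{u_N}_{\underline t_N}$ via Proposition \ref{lem_merging1}, and then applies Lemma \ref{bestinASG2} a second time at the merging time $\mathscr T_N$ to produce a best-type load-zero ancestor of some $v^\ast\in\mathscr J^{(N)}_{\underline t_N}$. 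You instead stay inside $[0,\underline t_N]$: you take the whole population $[N]\times\{\underline t_N\}$ as the second reference set in Proposition \ref{lem_merging1}, and you replace both invocations of Lemma \ref{bestinASG2} by the forward flow-of-types identity on the click-free window $[t_0,\underline t_N]$, which cleanly forces any best-type individual at $\underline t_N$ to have a zero-load, best-type ancestor at $t_0$ lying in $\mathscr A^{[N]\times\{\underline t_N\}}_{t_0}(0)$; the merging then transports this ancestor into $\mathscr A^{\mathscr J^{(N)}_{\underline t_N}}_{t_0}(0)$ and the type propagates forward along the zero-load path. Your version is more self-contained for this particular statement (no dependence on the minimum-load-ASG machinery of Section \ref{renASG} beyond the click-rate control), at the cost of invoking Theorem \ref{thmclicks} to localize the last forward click. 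Two remarks on that localization: (i) there is no circularity, since Theorem \ref{thmclicks} is proved before this proposition and is used by the paper itself in Proposition \ref{propchaos1} in exactly the same way; (ii) when $\underline t_N/(f(N)\theta_N)\to\infty$ the window $[\underline t_N-C(\eps)f(N)\ln N,\underline t_N]$ escapes every compact on the rescaled time axis, so the functional convergence of Theorem \ref{thmclicks} (stated on $[0,T]$ for fixed $T$) does not literally cover it — but the paper's own no-click events (e.g.\ $\mathscr E_N^{(2)}$ in Lemma \ref{bestinASG2}) are asserted at the same level of rigor, so this is a shared gloss rather than a gap specific to your proof.
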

\begin{proof}Let $(u_N)$ be such that $(u_N-\underline t_N)/(f(N)\theta_N ) \to \infty$ as $N\to \infty$. Recall the notation of the merging time of two sets in \eqref{def_merg_ancgraph0} and introduce for brevity the notation
\vspace{-0.2cm}
$$ \mathscr{T}_N:=\mathscr{C}_0^{\mathscr J^{(N)}_{\underline t_N},\,\bar {\mathscr A}^{u_N}_{\underline t_N}}$$
for the merging time of zero-load ASG's of $\mathscr J^{(N)}_{\underline t_N}$ and $ \bar {\mathscr A}^{u_N}_{\underline t_N}$.
Since Lemma~\ref{bestinASG2} guarantees that $\bar {\mathscr A}^{u_N}_{\underline t_N}$ is sufficiently large with high probability, an application of Proposition~\ref{lem_merging1} (to the times $\underline t_N$ in place of $T$) yields
$$\liminf_{N \to \infty} \P \left( \mathscr{T}_N \geq \underline t_N-C(\eps) f(N) \ln N \right)\ge 1-\delta(\eps).$$
On the event  that there is no click on $\mathscr A^{u_N}$ between times $\underline t_N$  and $\mathscr T_N$, the minimum load ASG $ \bar {\mathscr A}^{u_N}$ does not acquire additional mutations between those times, hence we have on that event the equality 
\vspace{-0.3cm}
$$
\mathscr A^{{\bar{\mathscr A}}^{u_N}_{\underline t_N}}_{\mathscr T_N}(0) = 
{\bar {\mathscr A}}^{u_N}_{\mathscr T_N}.
$$
One more application of Lemma \ref{bestinASG2}, now to the times $\mathscr T_N$ in place of $\underline t_N$, implies
$$ \mathbb P\left(\exists v \in \bar {\mathscr A}^{u_N}_{\mathscr{T}_N}: \eta^{(N)}(v) = K^\ast_N(\mathscr{T}_N)\right) \to 1 \quad \mbox{ as } N\to \infty. $$
Because of the definition of $\mathscr T_N$, the individual $v$ is a load zero potential ancestor of some  $v^\ast \in \mathscr J^{(N)}_{\underline t_N}$. Consequently, with probability tending to 1 as $N\to \infty$, 
$$ K^\ast_N(\mathscr{T}_N)=\eta^{(N)}(v)=\eta^{(N)}(v^\ast)   =  K^\ast_N(\underline t_N),
$$
which  ends the proof.
\end{proof}

\section{First passage percolation in Poisson-decorated Yule trees}\label{FPinYule}
In this section we consider a Yule tree $\mathscr Y$ with splitting rate $\alpha$, and regard $\mathscr Y$ as the union of the (infinitely many) {\em  lineages} $\mathfrak l$ leading from the root to $\infty$. More formally, we regard a realisation of $\mathscr Y$ as the union $$\mathfrak y= \bigcup\limits_{\iota \in \mathscr U} \{\iota\}\times [\tau_\iota, \infty),$$ where $$\mathscr U = \{\emptyset\} \cup \bigcup_{g\in \mathbb N} \mathbb N^g$$ is the Ulam-Harris index set (meaning that the  branch with index \mbox{$\iota_1\ldots \iota_g\in\mathscr U$} is the $\iota_g$-th branch born by the branch with index $\iota_1\ldots \iota_{g-1}$), and $\tau_\iota$ is the birth time of the branch with index $\iota$. We think of $(\iota, h)$ as a {\em node} in the tree $\mathfrak y$, and refer to $h$ as its {\em heigth}. \\ Given $\mathscr Y$, let $\Pi$ be a Poisson process on $\mathscr Y$ whose intensity is $\mu$ times the length measure on  $\mathscr Y$. (In Section \ref{YuleinASG} we will prove that these {\em Poisson-decorated Yule trees} indeed appear  in the ASG as $N\to \infty$, see Figure \ref {fig:boat4} for an illustration).
Again we assume $\mu < \alpha$ and define the {\em minimal $\Pi$-load in $\mathscr Y$} as
\begin{equation}\label{defK}
L:= \min\{\Pi(\mathfrak l) : \mathfrak l  \mbox{ is a  lineage of } \mathscr Y\}.
\end{equation}
In this section we will use the abbreviation 
$$q:= \frac \mu{\alpha + \mu}, \quad \rho :=  \frac \mu \alpha = \frac q{1-q}. $$  
\begin{prop}\label{pi_eqals_p} Let the random variable $L$ be defined by \eqref{defK}.\\
a) Recall the Definition of $\mathfrak G$ in \eqref{defG}. Then there exists some constant $C_\rho > 0$ such that
\begin{equation}\label{tailsofK}
 \P(L>\ell) =   \underbrace{\mathfrak{G}\circ... \circ\mathfrak{G}}_{\ell  \text{ times}}(\rho), \qquad \ell \in \N_0,
 \end{equation}
\begin{equation}\label{geometrictails}
\P(L> \ell) \sim C_\rho \left(\frac \rho{1+\rho}\right)^\ell \quad \mbox{ as }\,  \ell \to \infty.
\end{equation}
b) The probability weights of $L$ $(\pi_k)_{k \in \N_0} := (\P(L=k))_{k \in \N_0}$ satisfy the recursion \eqref{recursion} with $\pi_0 = 1-\rho$,  and thus are equal to the weights $(p_k)_{k\in \N_0}$, appearing in Theorem \ref{thmprofile}. 
\end{prop}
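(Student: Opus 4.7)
The plan is to exploit the branching self-similarity of $\mathscr Y$ to derive a one-step fixed-point equation for the tail $P_\ell := \P(L > \ell)$, identify it with the iteration of $\mathfrak G$, and then obtain both the asymptotics of $P_\ell$ and the probability weights $\pi_k$ by standard algebra.

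First, I would decompose according to the first event on the root-edge of $\mathscr Y$. This is a race between the root-split (rate $\alpha$) and the first Poisson point (rate $\mu$). By the Yule branching property and the strong Markov property: with probability $1-q = \alpha/(\alpha+\mu)$ the first event is a split, after which the two subtrees are i.i.d.\ decorated Yule trees, so $L = \min(L_1, L_2)$ with $L_1, L_2$ i.i.d.\ copies of $L$; with probability $q = \mu/(\alpha+\mu)$ the first event is a mutation, which is inherited by every lineage, after which the dynamics restart from one lineage, so $L \stackrel{d}{=} 1 + L'$. Setting $P_{-1} := 1$, this gives
\[
P_\ell = (1-q)\,P_\ell^{\,2} + q\,P_{\ell-1} \qquad \text{for all } \ell \ge 0.
\]

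Next, the quadratic above has roots $\mathfrak G(P_{\ell-1})$ and $(1+\rho) - \mathfrak G(P_{\ell-1})$; since $P_{\ell-1} \le 1$ gives $\mathfrak G(P_{\ell-1}) \le \mathfrak G(1) = \rho < 1$ while the second root is $\ge 1$, the constraint $P_\ell \le 1$ selects the smaller one whenever the two differ. For $\ell = 0$ both roots $\{1,\rho\}$ are admissible; here I would rule out $P_0 = 1$ by noting that the subtree of lineages carrying no points of $\Pi$ is a continuous-time linear birth-death process with birth rate $\alpha$ and death rate $\mu$, whose survival probability equals $1-\rho > 0$, so $\P(L = 0) = 1-\rho$ and $P_0 = \rho = \mathfrak G(1)$. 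Induction then yields $P_\ell = \mathfrak G^{\circ \ell}(\rho)$. For the geometric tail, $\mathfrak G$ is analytic at $0$ with $\mathfrak G(0) = 0$ and $\mathfrak G'(0) = \rho/(1+\rho) \in (0,1)$; Koenigs' linearization theorem supplies an analytic $\phi$ with $\phi(0) = 0$, $\phi'(0) = 1$ and $\phi \circ \mathfrak G = \mathfrak G'(0)\,\phi$, so iterating gives $\phi(P_\ell) = (\rho/(1+\rho))^\ell \phi(\rho)$ and hence $P_\ell \sim \phi(\rho)\,(\rho/(1+\rho))^\ell$, proving \eqref{geometrictails} with $C_\rho = \phi(\rho)$.

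For part b), set $\pi_k := P_{k-1} - P_k$, so $\pi_0 = 1-\rho$ and $\sum_{k'=0}^{k-1}\pi_{k'} = 1 - P_{k-1}$. Using the defining identity $P_k(1+\rho - P_k) = \rho P_{k-1}$ satisfied by $\mathfrak G$, a short expansion gives
\[
\pi_k^{\,2} - \pi_k\Bigl(1-\rho - 2\sum_{k'=0}^{k-1}\pi_{k'}\Bigr) = (P_{k-1}-P_k)\bigl(1+\rho - P_{k-1}-P_k\bigr) = \rho\,\pi_{k-1},
\]
which is precisely recursion \eqref{recursion}. Together with the matching initial value $\pi_0 = 1-\rho = p_0$, this identifies $\pi_k = p_k$ for all $k$. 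The only mildly delicate step is the ruling-out of the spurious root $P_0 = 1$ via the survival of the mutation-free birth-death sub-tree; beyond that, the proof is a Koenigs-type asymptotic argument combined with elementary algebra around the quadratic fixed-point equation.
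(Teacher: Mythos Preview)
Your proof is correct, and the route differs from the paper's in several places, so a brief comparison is worthwhile.

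\textbf{Iteration formula.} You decompose at the \emph{first event} on the root edge (split versus mutation), which immediately yields the implicit quadratic $P_\ell=(1-q)P_\ell^{\,2}+qP_{\ell-1}$ and hence $P_\ell=\mathfrak G(P_{\ell-1})$ after root selection. The paper instead introduces an auxiliary Galton--Watson tree $\mathscr G$ (births at Yule splits, deaths at first Poisson points), identifies $\mathfrak G$ as the probability generating function of the number of leaves of $\mathscr G$ restricted to the extinction event, and then reads off $\P(L>\ell)=\E[(\P(L>\ell-1))^{\mathfrak m_1}]=\mathfrak G(\P(L>\ell-1))$ via self-similarity at the successive layers of Poisson points. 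Your derivation is shorter; the paper's derivation buys a structural interpretation of $\mathfrak G$ and the link to branching-process extinction.

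\textbf{Tail asymptotics.} You invoke Koenigs linearization; the paper works by hand with the Taylor expansion of $\mathfrak G$ at $0$ and bounds on $\mathfrak G''$. One small caveat: Koenigs' $\phi$ is a priori analytic only in a neighbourhood of $0$, so writing $C_\rho=\phi(\rho)$ is cleanest if you first iterate finitely many times until $P_{\ell_0}$ lies in that neighbourhood and then set $C_\rho=\phi(P_{\ell_0})/q^{\ell_0}$ (equivalently, $C_\rho=\lim_\ell P_\ell/q^{\ell}$). This is cosmetic; positivity of $C_\rho$ follows since $\phi(z)\sim z$ near $0$ and $P_{\ell_0}>0$.

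\textbf{Part b).} The paper derives the recursion from the distributional fixed-point equation $L\stackrel d= M+\min(L_1,L_2)$ with $M$ geometric (the number of Poisson points before the first split), which is the natural object from the viewpoint of max-type recursive distributional equations (Remark~\ref{geome}). You bypass this and work directly with the algebraic identity $P_k(1+\rho-P_k)=\rho P_{k-1}$, and your computation
\[
(P_{k-1}-P_k)(1+\rho-P_{k-1}-P_k)=P_{k-1}(1+\rho-P_{k-1})-P_k(1+\rho-P_k)=\rho(P_{k-2}-P_{k-1})
\]
is both correct and cleaner than the paper's manipulation. The trade-off is that the paper's route makes the connection to the Aldous--Bandyopadhyay framework explicit, while yours stays purely analytic.
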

\begin{proof}
1.  The Yule tree $\mathscr Y$ together with its Poisson decoration $\Pi$ (that enter in the definition of $L$ in \eqref{defK}) define a binary branching supercritical  Galton-Watson tree $\mathscr G$ as follows:  when moving away from the root of $\mathscr Y$, every first encouter with a point of $\Pi$ stands for a death in $\mathscr G$, while every splitting point of  $\mathscr Y$ stands for a birth in $\mathscr G$. Hence $\mathscr G$ has offspring distribution
\begin{eqnarray*} 
\P\left(\mbox{no child}\right) = q  =1-
 \P\left(\mbox{two children}\right).
 \end{eqnarray*}
The event that $\mathscr G$ is finite equals the event that there is no lineage $\mathfrak l$ in $\mathscr Y$ with $\Pi(\mathfrak l) = 0$, which in turn equals the event $\{L>0\}$. 
A first step decomposition shows that the extinction probability of $\mathscr G$ is $q/({1-q}) = \rho$ (cf. also \eqref{firstgen} and \eqref{gequalsG2} below), hence
\begin{equation}
\label{Kq}
1-\pi_0 =  \P(L>0) = \P(\#\mathscr G < \infty) = \rho.
 \end{equation}
2. Exploring the lineages of $\mathscr Y$ beyond the points of $\Pi$ that are closest to the root of $\mathscr Y$, we encounter a self-similar situation: any such point can be seen as the root of an independent copy of $\mathscr G$, and the event $\{L>1\}$ equals the event  that all of these Galton-Watson trees are finite, which in view of \eqref{Kq} has probability
\begin{equation}\label{Lbiggerone}
\P(L>1) =\E[\rho^{\mathfrak m_1}I_{\{\#\mathscr G < \infty\}}]
\end{equation}
where $\mathfrak m_1$ is the number of leaves of $\mathscr G$. We put
 \begin{eqnarray}\label{gequalsG1}
 g(u) := \E[u^{\mathfrak m_1}I_{\{\#\mathscr G < \infty\}}] = \begin{cases} \E[u^{\mathfrak m_1}] , & 0\le u < 1\\ \P\left(\#\mathscr G < \infty\right), \quad  &u = 1. \end{cases}
 \end{eqnarray}
 A first generation decomposition gives
\begin{equation} \label{firstgen}
g(u) = q u+ (1-q) g(u)^2.
\end{equation}
From the two solutions of this equation only the function $\mathfrak G$ given by \eqref{defG} 
is admissible, since from \eqref{gequalsG1} and \eqref{Kq} we  have that $g(1) = \rho < 1$. 
Consequently, we have
\begin{equation}\label{gequalsG2}
\E[u^{\mathfrak m_1}I_{\{\#\mathscr G < \infty\}}] = \mathfrak G(u), \quad 0 \le u \le 1.
 \end{equation}
 Combining \eqref{Lbiggerone} and \eqref{gequalsG2} for $u:=\rho$ gives \eqref{tailsofK} for $\ell = 1$. Proceeding further, $\{L>2\}$ means that all of the $\mathfrak m_1$ many Poisson points are founders of lineages that carry more than one point of $\Pi$. This event has probability
$$\P(L>2) = \E[  (\mathfrak G(\rho))^{\mathfrak m_1}] = \mathfrak G(\mathfrak G(\rho)),$$
which is \eqref{tailsofK} for $\ell =2$. For general $\ell \in \N$, formula \eqref{tailsofK} follows by induction.
\\
3. From \eqref{defG} we have 
$$\mathfrak G'(s) = \rho\left((1+\rho)^2-4\rho s\right)^{-1/2},\quad 0\le s \le 1$$
and hence 
\begin{equation}\label{Gprimezero}
0 < q= \mathfrak G'(0) \le \mathfrak G'(s) \le \mathfrak G'(1) = \rho < 1,\quad 0\le s \le 1.
\end{equation}
For $\ell \in \mathbb N_0$ let $a_\ell$ be defined by the right hand side of \eqref{tailsofK}. Since $\mathfrak G$ is twice continuoulsly differentiable with $\mathfrak G(0)=0$ and $\mathfrak G'(0)=q$,  we have for some $0 < \chi_\ell < \xi_\ell < a_\ell$
$$a_0 = \rho, \qquad a_{\ell+1}=\mathfrak G(a_\ell) =  a_\ell \mathfrak G'(\xi_\ell) = a_\ell(q+\mathfrak G''(\chi_\ell)\xi_\ell).$$
Hence with $R_\ell := \mathfrak G''(\chi_\ell)\xi_\ell$ 
we obtain 
$$\frac{a_\ell}{q^\ell} = \rho\prod_{j=0}^{\ell-1}\left(1 + \frac {R_\ell}{q}\right).$$
Since $\mathfrak G$ is Lipschitz continuous on $[0,1]$ with Lipschitz constant $\rho < 1$ and \mbox{$\mathfrak G(0)=0$, }we have $a_\ell \le \rho^{\ell+1}$. From the fact that $\mathfrak G''$ is nonnegative and increasing we deduce that
$ 0\le R_\ell\le \mathfrak G''(1) \rho^\ell$, showing that $a_\ell \sim q^\ell  C_\rho$ with $C_\rho = \rho\prod_{j=0}^\infty\left(1 + \frac {R_\ell}{q}\right) $. In view of \eqref{tailsofK}  this proves that the random variable $L$ has the tail asymptotics
\eqref{geometrictails}.\\
4. Let $\mathfrak e$ be the edge  that is between the root of  $\mathscr Y$ and its closest branch point. The random variable 
$M:= \Pi(\mathfrak e)$ satisfies
\begin{equation}\label{geom}
\P(M \geq \ell) = q^\ell, \quad \ell \in \N_0.
\end{equation}
The random variable $L$ satisfies the stochastic fixed point equation
\begin{equation}\label{fpe}
L \stackrel d= M+ \min(L_1,L_2)
\end{equation}
where $L_1$ and $L_2$ have the same distribution as $L$ and $L,L_1,L_2, M$ are independent. Hence
\begin{eqnarray}\label{distweight}
\P(M+ \min(L_1,L_2) = k) = \sum_{i=0}^k \P(M=k-i) \P(\min (L_1,L_2) = i), \quad k\in \N_0.
\end{eqnarray}
From the independence of $L_1$, $L_2$ we have
$$ \P(\min (L_1,L_2) = i) = \pi_{i}^2 + 2\pi_{i}\sum_{j>i}\pi_{j}=:w_i.$$
From \eqref{geom} we have
$$\P(M=k-i)=  q^{k-i}(1- q).$$
Inserting this into \eqref{distweight} and observing \eqref{fpe} we obtain
$$\pi_k = \sum_{i=0}^k  q^{k-i}(1- q)w_i, \quad k \in \N_0.$$
Taking differences yields
$$\pi_k-\pi_{k-1} = (1- q)w_k-\sum_{i=0}^{k-1} q^{k-1-i}(1- q)^2w_i = (1- q)(w_k -\pi_{k-1}). $$
 Observing that $(1- q) (1+\rho)=1$  we arrive at
\begin{eqnarray}
(1+\rho)(\pi_k-\pi_{k-1}) =  \pi_{k}\left(\pi_k + 2\sum_{j>k}\pi_{j}\right)-\pi_{k-1}
\end{eqnarray}
which is equivalent to
\vspace{-0.2cm}
\begin{equation}\label{newrec}
\rho\, (\pi_k-\pi_{k-1})=  \pi_{k}\left(\pi_k -1+ 2\sum_{j>k}\pi_{j}\right).
\end{equation}
\vspace{-0.2cm}
In view of \eqref{geometrictails} we have $\P(L<\infty)=1$; hence
$$\pi_k-1 +2\sum_{j>k}\pi_j= -\pi_k+1 -2+2\pi_k+2\sum_{j>k}\pi_j = -\pi_k+1-2\sum_{j<k}\pi_j.$$
Thus \eqref{newrec} together with \eqref{Kq}  shows that ($\pi_k)$ satisfies the recursion \eqref{recursion}.
\end{proof}
\begin{rem} \label{geome}
The setting of Proposition \ref{pi_eqals_p}  gives an instance of Example 40 in \cite{AB05}: our stochastic fixed point Equation~\eqref{fpe} corresponds to Eq.~(49) in~\cite{AB05} with a geometrically distributed ``toll'' random variable $\eta$. Thus, the results of Proposition \ref{pi_eqals_p}  apply to a specific case of a situation which, according to \cite{AB05}, ``does not seem to have been studied generally''. As stated in Theorem~\ref{thmprofile}d) and explained in Section~\ref{proofbtod}, this connects to the asymptotic minimum of a branching random walk whose increment distribution is supported on $\R_+$.
(See \cite{hu2016big} and references therein for the asymptotics of minima of random walks with two-sided increment distributions.)
\end{rem}
In order to prepare for the connection between Proposition \ref{pi_eqals_p} and the decorated ASG of a sampled individual, we need some more notation. 
\begin{defi}
Let $\mathscr Y$ be the Yule tree described at the beginning of the section. For a node $v \in \mathscr Y$, let $\mathfrak a(v)$ be the path from $v$ to the root,  and for $h>0$, $k \in \N_0$ let $\mathscr Y_h(k)$ be the set of nodes $v$ of~$\mathscr Y$ that have height $h$ and obey $\Pi(\mathfrak a(v))=k$.  
Finally, we define, as an analogue to \eqref{defK}, the {\em minimal $\Pi$-load in $\mathscr Y$ up to height $h$} as
 \begin{equation}\label{defKt}
 L_h := \min\{\Pi(\mathfrak a(v)):v \in \mathscr Y_h\}.
 \end{equation}
 \end{defi}
 The following lemma says that the minimal $\Pi$-load of the (infinite) lineages in $\mathscr Y$ can with high probability be observed already at a  height $w_N$ which is large but of smaller order than $\ln N$ as $N \to \infty$; moreover at this height there are many nodes of the Yule tree whose ancestral paths collect this load.
\begin{lem} \label{Yuletruncate}
Let $w_N \to \infty$ with $w_N=o(\ln N)$ as $N\to \infty$. Then for all $k \in \N_0$,
\begin{equation}\label{truncate}
\P\left(L_{w_N} = k, \, e^{(\alpha-\mu)w_N/2} \leq \# \mathscr Y_{w_N}(k) \leq e^{2(\alpha-\mu)w_N} \right) \to p_k \mbox{ as } N\to \infty.
\end{equation}
\end{lem}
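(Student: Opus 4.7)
I would show: (i) $L_h\nearrow L$ a.s.\ as $h\to\infty$, so that $\P(L_{w_N}=k)\to p_k$ by Proposition~\ref{pi_eqals_p}; (ii) an upper bound on $\#\mathscr{Y}_{w_N}(k)$ via a first-moment computation; and (iii) a lower bound on $\#\mathscr{Y}_{w_N}(k)$ of the same exponential order, via Kesten--Stigum applied to a supercritical continuous-time branching process that lives inside the tree on the event $\{L=k\}$.

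For (i), monotonicity is immediate: each $v'\in\mathscr{Y}_{h'}$ with $h'>h$ has a unique ancestor $v\in\mathscr{Y}_h$ with $\Pi(\mathfrak a(v))\leq\Pi(\mathfrak a(v'))$, hence $L_h\leq L_{h'}$; and $L_h\leq L$ by restricting any minimising infinite lineage to height $h$. If the limit $L_\infty:=\lim_h L_h$ were strictly smaller than $L$, then the (locally finite) subtree $\{v\in\mathscr{Y}:\Pi(\mathfrak a(v))\leq L_\infty\}$ would be infinite, so by K\"onig's lemma would contain an infinite ray of $\Pi$-load $\leq L_\infty<L$, contradicting the definition of $L$. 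Combined with Proposition~\ref{pi_eqals_p}, this yields $\P(L_{w_N}=k)\to p_k$.

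For (ii), I would use the mean formula $\E[\#\mathscr{Y}_h(k)]=e^{(\alpha-\mu)h}(\mu h)^k/k!$, derivable either from the observation that a random lineage at height $h$ carries a $\mathrm{Poisson}(\mu h)$-distributed number of mutations or by solving the ODE system $m_k'=(\alpha-\mu)m_k+\mu m_{k-1}$ for the multi-type branching means. Markov's inequality then gives $\P(\#\mathscr{Y}_{w_N}(k)>e^{2(\alpha-\mu)w_N})\to 0$. For (iii), I would condition on $\{L=k\}$, pick an infinite lineage $\mathfrak l^*$ with $\Pi(\mathfrak l^*)=k$, let $H<\infty$ be the height of its last point of $\Pi$, and $v^*\in\mathfrak l^*\cap\mathscr{Y}_H$. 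By the branching property at $v^*$, the subtree rooted there is an independent copy of $(\mathscr{Y},\Pi)$, and its load-$0$ descendants coincide with the load-$k$ descendants of $v^*$ in $\mathscr{Y}$. These form a continuous-time binary branching process with split rate $\alpha$ and killing rate $\mu$ (Malthusian parameter $\alpha-\mu$), which survives by the very construction of $\mathfrak l^*$. Kesten--Stigum then yields $\#\mathscr{Y}_h(k)\geq W_k\,e^{(\alpha-\mu)(h-H)}$ for all sufficiently large $h$, with $W_k>0$ a.s.\ on $\{L=k\}$; since $H<\infty$ a.s.\ and $w_N\to\infty$, this gives $\P(\#\mathscr{Y}_{w_N}(k)\geq e^{(\alpha-\mu)w_N/2}\mid L=k)\to 1$. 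Combining (i)--(iii) yields the lemma, using that $\{L_{w_N}=k\}\subseteq\{L\geq k\}$ and $\P(L_{w_N}\neq L)\to 0$ allow one to swap between $\{L_{w_N}=k\}$ and $\{L=k\}$ at negligible cost.

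The main delicacy will be step~(iii): the ``stabilising'' ancestor $v^*$ is defined through a global property of the tree, so the Kesten--Stigum invocation has to be made via a strong Markov argument at the height $H$ under the conditional law $\P(\,\cdot\mid L=k)$. This bookkeeping can be bypassed by a second-moment argument directly on the multi-type process $(N_h(0),\ldots,N_h(k))$, whose mean matrix has Perron eigenvalue $\alpha-\mu$: a routine variance estimate combined with Chebyshev then delivers the lower bound without any explicit reference to $v^*$.
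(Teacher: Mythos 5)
Your main line of argument is essentially the paper's: both proofs reduce the lower bound on $\#\mathscr Y_{w_N}(k)$ to the fact that, on the relevant event, the load-$k$ nodes contain the population of a supercritical binary branching process (split rate $\alpha$, killing rate $\mu$, Malthusian parameter $\alpha-\mu$) rooted at an a.s.\ finite height and surviving, whence it has size of order $e^{(\alpha-\mu)(w_N-O(1))}$ at height $w_N$. The paper organizes this via the forest of type-$k$ subtrees rooted at the relevant mutation points (given the type-$(<k)$ structure these are i.i.d., and the surviving ones are born before height $w_N^{1/2}$ w.h.p.); that is precisely the clean way to do the strong-Markov bookkeeping you flag for your $v^\ast$, and with that reformulation your step (iii) goes through. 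Your steps (i) (monotonicity plus K\"onig's lemma giving $L_{w_N}\uparrow L$, hence $\P(L_{w_N}=k)\to p_k$) and (ii) (many-to-one mean formula $\E[\#\mathscr Y_h(k)]=e^{(\alpha-\mu)h}(\mu h)^k/k!$ plus Markov) are correct and, if anything, tidier than the corresponding steps in the paper, which instead deduces $\{L\le k\}$ from survival of a large type-$k$ population and gets the upper bound from the growth rate of the finitely many early-born type-$k$ trees.

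The one genuine error is the closing claim that step (iii) can be bypassed by a routine second-moment/Chebyshev argument on $(N_h(0),\dots,N_h(k))$ without reference to the surviving subtree. The normalized variable $N_h(k)e^{-(\alpha-\mu)h}$ converges to a limit $W_k$ with $\P(W_k=0)\ge\P(L>k)>0$ (on $\{L>k\}$ the load-$\le k$ subtree is finite), so Chebyshev around the unconditional mean yields only an $O(1)$ bound on the deviation probability; it cannot produce a probability tending to one, conditionally on $\{L=k\}$ or otherwise (unconditionally the statement is false). What you actually need is $\P(W_k>0\mid L=k)=1$, and that is exactly the structural survival statement about the type-$k$ forest that the bypass was meant to avoid. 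Keep route (iii); drop the bypass.
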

\begin{proof}
As described in the proof of Proposition \ref{pi_eqals_p}, an equivalent representation of a binary branching Galton-Watson tree with mutation at rate $\mu$ is a sequence of trees of different types killed at rate $\mu$. Descendants of the root are of type $0$. Every death of an individual of type $0$ leads to a new binary branching Galton-Watson tree of type $1$ and so on. Let us consider the event $\{L_{w_N} = k\}$. This event implies that all the trees of types $l \leq k-1$ are extinct at time~$w_N$. Every such tree is a supercritical tree with birth rate $ \alpha$ and death rate~$\mu$. Conditioned on extinction, it is thus a subcritical tree with birth rate $\mu$ and death rate $ \alpha$, and it has a mean number of leaves 
$$\mathfrak{G}'(1)=\tfrac {\rho}{1-\rho} = \tfrac \mu{\alpha-\mu}$$
 (cf. \eqref{Gprimezero}) and a finite mean extinction time. Hence 
$$ \lim_{N \to \infty}\P \left( \# \mathscr Y_{w_N^{1/2}}(l)=0,0 \leq l \leq k-1| L_{w_N} = k\right)=1. $$
Moreover, by definition, any tree of type $k$ still alive at time $w_N$ is born before the death of the last alive type $k-1$ individual. 
Let us denote by $\widehat{\mathscr Y}$ a binary Galton-Watson tree of type~$k$ with birth rate $\alpha$ and death rate $\mu$. On the event of survival (see for instance \cite{athreya1972branching} p.112), 
$$ \lim_{t \to \infty} (\ln \widehat{\mathscr Y}_t)/t = \alpha-\mu. $$
On the event
$$ \mathfrak{E}_{k,N}:=\left\{\# \mathscr Y_{w_N^{1/2}}(l)=0,0 \leq l \leq k-1\right\} \cap \{ L_{w_N} = k \}$$
we know that 
\begin{itemize}
\item There is a finite mean number of independent copies of $\widehat{\mathscr Y}$ and a positive number of them survive after time $w_N$ which goes to infinity with $N$.
\item These independent copies have a root born between the times $0$ and $w_N^{1/2}$.
\end{itemize}
We deduce that
\vspace{-0.2cm}
$$ \lim_{N \to \infty}\P \left( e^{(\alpha-\mu)w_N/2} \leq \# \mathscr Y_{w_N}(k) \leq e^{2(\alpha-\mu)w_N}| \mathfrak{E}_{k,N}\right)=1. $$
Finally, notice that from properties of supercritical Galton-Watson processes,
$$ \lim_{N \to\infty}\P\left( \# \mathscr Y_{\infty}(k) \geq 1 | \# \mathscr Y_{w_N}(k) \ge e^{(\alpha-\mu)w_N/2} \right)=1. $$
We thus obtain
$$ \lim_{N \to\infty}\P\left( L \leq k | L_{w_N}=k,\# \mathscr Y_{w_N}(k) \ge e^{(\alpha-\mu)w_N/2}  \right)=1. $$
But by definition, $L_{w_N} \leq L$, which yields
$$\P\left(L_{w_N} = k, \, e^{(\alpha-\mu)w_N/2} \leq \# \mathscr Y_{w_N}(k) \leq e^{2(\alpha-\mu)w_N} \right) \to \P\left(L = k\right) \mbox{ as } N\to \infty.$$
An application of Proposition \ref{pi_eqals_p} ends the proof.
\end{proof}

\begin{lem}\label{Yuleapprox}
Let $(w_N)$ be as in Lemma \ref{Yuletruncate}, and assume  that the splitting rate $\alpha_N(h)$ and the decoration rate $\mu_N(h)$  may depend on $N$ and $h$ such that, uniformly in $h \in [0, w_N]$,
\begin{equation}\label{asrates}
 \lim_{N \to \infty} \alpha_N(h)=\alpha \mbox{ and }   \lim_{N \to \infty} \mu_N(h)=\mu.
\end{equation}
 Let $\mathscr Y^{(N)}$ be the corresponding $\Pi^{(N)}$-decorated Yule tree, grown up to the height $w_N$. For $0\le h \le w_N$, define in analogy to  \eqref{defKt}
 $$ L^{(N)}_h:= \min\{\Pi^{(N)}(\mathfrak a(v)):v \in \mathscr Y^{(N)}_h\}.$$ 
Then, for all $k \in \N$, 
  $$\pi_k^{(N)} := \P(L_{w_N}^{(N)} =k) \to \pi_k \, \mbox{ as } N\to \infty,$$
  where $(\pi_k)$ is as in Proposition \ref{pi_eqals_p}. Moreover, the analogue of \eqref{truncate} holds for $L^{(N)}$ and $\mathscr Y^{(N)}$ instead of $L$ and $\mathscr Y$.
\end{lem}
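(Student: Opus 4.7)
The plan is to deduce Lemma~\ref{Yuleapprox} from Lemma~\ref{Yuletruncate} by a sandwich coupling that brackets the inhomogeneous tree $\mathscr Y^{(N)}$ between two homogeneous Poisson-decorated Yule trees whose rates are close to $(\alpha,\mu)$. Fix $\eps>0$ small enough that $\mu+\eps<\alpha-\eps$, and use \eqref{asrates} to pick $N_0 = N_0(\eps)$ such that for all $N\ge N_0$ and $h\in[0,w_N]$,
$$\alpha-\eps \le \alpha_N(h)\le \alpha+\eps, \qquad \mu-\eps\le \mu_N(h)\le \mu+\eps.$$
On a common probability space, first build a homogeneous background tree $\mathscr Y^{\max}$ of splitting rate $\alpha+\eps$ decorated by an independent Poisson process of rate $\mu+\eps$, and attach to each potential split point and each decoration point an independent $\mathrm{Unif}[0,1]$ mark. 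By thinning with these marks, read off three trees simultaneously: $\mathscr Y^-$ with rates $(\alpha+\eps,\mu-\eps)$, $\mathscr Y^+$ with rates $(\alpha-\eps,\mu+\eps)$, and $\mathscr Y^{(N)}$ with rates $(\alpha_N(h),\mu_N(h))$, where at rejected splits one prunes one of the two offspring subtrees so that we obtain honest subtrees with the inclusion $\mathscr Y^+\subseteq \mathscr Y^{(N)} \subseteq \mathscr Y^-$. Since the splitting thresholds are ordered as $(\alpha-\eps)/(\alpha+\eps) \le \alpha_N(h)/(\alpha+\eps) \le 1$ and the decoration thresholds as $(\mu-\eps)/(\mu+\eps)\le \mu_N(h)/(\mu+\eps)\le 1$, every lineage of $\mathscr Y^+$ is also a lineage of $\mathscr Y^{(N)}$ and of $\mathscr Y^-$, and the $\Pi$-load along any fixed lineage is nonincreasing along the chain $\mathscr Y^+ \to \mathscr Y^{(N)} \to \mathscr Y^-$. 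Taking the minimum over the appropriate lineage sets then yields the a.s.\ sandwich
$$L^-_{w_N}\le L^{(N)}_{w_N}\le L^+_{w_N}.$$

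By Lemma~\ref{Yuletruncate} applied to the homogeneous trees $\mathscr Y^\pm$, the laws of $L^\pm_{w_N}$ converge to laws with probability weights $(p_k(\rho^\pm))_{k\in\N_0}$ given by the recursion \eqref{recursion} with $\rho$ replaced by $\rho^- := (\mu-\eps)/(\alpha+\eps)$ and $\rho^+ := (\mu+\eps)/(\alpha-\eps)$ respectively. Both $\rho^\pm$ tend to $\rho=\mu/\alpha$ as $\eps\to 0$, and the continuity of $\rho \mapsto p_k(\rho)$ follows from \eqref{tailsofK} combined with the smooth dependence of $\mathfrak G$ on $\rho$ via~\eqref{defG}. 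Taking $\limsup_N$ and $\liminf_N$ of the cumulative sandwich $\P(L^+_{w_N}\le k)\le \P(L^{(N)}_{w_N}\le k)\le \P(L^-_{w_N}\le k)$ and then letting $\eps\to 0$ gives $\P(L^{(N)}_{w_N}\le k)\to \sum_{j\le k} p_j$, whence $\pi^{(N)}_k = \P(L^{(N)}_{w_N}=k)\to p_k$.

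To upgrade this to the full analogue of \eqref{truncate} including the bounds on $\#\mathscr Y^{(N)}_{w_N}(k)$, one reruns the proof of Lemma~\ref{Yuletruncate} with $(\alpha_N(h),\mu_N(h))$ in place of $(\alpha,\mu)$. The two ingredients used there---(i) the subcritical Galton--Watson trees obtained by conditioning on extinction the type-$l$ subtrees for $l<k$ have mean extinction time and mean number of leaves bounded uniformly in $h\in[0,w_N]$ and $N\ge N_0$, and (ii) on survival, each type-$k$ subtree satisfies $t^{-1}\ln\widehat{\mathscr Y}^{(N)}_t = (\alpha-\mu) + O(\eps)$---both hold under \eqref{asrates}, uniformly on $[0,w_N]$. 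The comfortable gap between $e^{(\alpha-\mu)w_N/2}$ and $e^{2(\alpha-\mu)w_N}$ absorbs the $O(\eps)$ perturbation once $\eps$ is fixed before letting $N\to\infty$.

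The main obstacle is the careful verification of the coupling: one must confirm that thinning from $\mathscr Y^{\max}$ with the rules above does produce $\mathscr Y^{(N)}$ with the correct law and simultaneously yields the inclusions of split-sets, decoration-sets and lineages that underpin the sandwich. The subtlety is that a rejected split must be interpreted as literal pruning of one offspring subtree, not as a merger of the two continuations; once this convention is in force, the sandwich $L^-_{w_N}\le L^{(N)}_{w_N} \le L^+_{w_N}$ is immediate and the convergence reduces to a routine application of Lemma~\ref{Yuletruncate} and the continuity of $p_k(\rho)$.
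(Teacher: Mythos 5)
Your proposal is correct and follows essentially the same route as the paper, whose proof is a two-sentence appeal to continuity of all relevant quantities in $(\alpha,\mu)$ together with a sandwiching argument; you simply make the sandwich explicit via a three-way thinning coupling of the inhomogeneous tree between homogeneous trees with rates $(\alpha+\eps,\mu-\eps)$ and $(\alpha-\eps,\mu+\eps)$, and then invoke Lemma \ref{Yuletruncate} and the continuity of $\rho\mapsto p_k(\rho)$. The coupling, the resulting a.s.\ inequalities $L^-_{w_N}\le L^{(N)}_{w_N}\le L^+_{w_N}$, and the treatment of the size bounds are all sound.
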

\begin{proof}  All the previous quantities (probabilities, mean numbers, growth rates and expected times) are continuous functions of the parameters $\alpha$ and $\mu$. Sandwiching arguments thus allow to extend the proof of the previous lemma. 
\end{proof}

 \section{First passage percolation within the ASG. Proof of Theorem \ref{thmprofile}a)}\label{YuleinASG}
In this section we will complete the proof of Theorem \ref{thmprofile}a) along the program laid out in Figure~\ref{fig:boat4}. This program has two parts. The first one says that the $\mathcal M$-decorated ASG's of single individuals look at the time scale~$f(N)$ like the Poisson-decorated Yule processes studied in Section \ref{FPinYule}. Building on the results of Section \ref{FPinYule}, we will prove this in Lemma~\ref{lem_indep}, together with the fact that on this time scale (and slightly beyond it) the ASG's of finitely many individuals are asymptotically independent. Roughly stated, Lemma~\ref{lem_indep} says that (with $w_N$ as in Lemma \ref{Yuletruncate})  the minimal-load potential ancestors at time $t_N-w_Nf(N)$ of an individual $(i,t_N)$ are numerous and that the minimal load $L_{w_Nf(N)}$ acquired over the time span $w_Nf(N)$ has asymptotically as $N\to \infty$ the distribution $(p_k)_{k \in \N_0}$ given by the recursion \eqref{recursion}, which we encoutered also in Proposition~\ref{pi_eqals_p}.
 The second part of the program announced in Figure \ref{fig:boat4} can be stated as the fact that a suitably large set of individuals that live at a  time $\underline t_N \gg f(N)\ln N$, contains with high probability an individual that is of the best type among all individuals living at time $\underline t_N$ (or in other words, has type $K_N^\ast(\underline t_N)$). This was proved in Proposition \ref{bigset}, and will  be applied to the set of minimum-load potential ancestors at time $\underline t_N:= t_N-w_Nf(N)$ of an individual $(i,t_N)$, showing that  this individual's type $\eta^{(N)}(i,t_N)$ is indeed with high probability the sum of the best type $K_N^\ast(t_N)$ in the total population at time $t_N$  and the minimal load $L_{w_N f(N)}$ acquired by the potential ancestry of $(i,t_N)$ between times $\underline t_N$ and $t_N$.

For the next lemma let us define, as an analogue to \eqref{defKt}, the {\em minimal load of potential ancestors at time  $t-r$ of an individual $(i,t) \in G^{(N)}$} as
 \begin{equation}\label{defKt1}
 L_r^{i,t} := d_{\mathcal M}\left( \mathscr{A}_{t-r}^{i,t}\, , (i,t)\right). 
 \end{equation}
\begin{lem}\label{lem_indep}
Let $t_N$ be as in \eqref{condtN} and $w_N$ be as in Lemma \ref{Yuletruncate}.
Choose a sample size $n\in \N$, let $i_1, \ldots, i_n \in \mathbb N$ be pairwise distinct, and $k_1,\ldots, k_n \in \N_0$. Then, for all $\eps > 0$,
$$\lim_{N\to \infty} \P\left( L_{f(N)w_N}^{i_\ell,t_N}=k_\ell, \, A_{t_N-f(N)w_N}^{i_\ell,t_N}(k_\ell)\geq 1/\eps, 1 \leq \ell \leq n \right)= p_{k_1}\cdots p_{k_n}.$$
\end{lem}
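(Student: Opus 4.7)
The plan is to couple each of the $n$ ASGs $\mathscr A^{i_\ell, t_N}$ decorated by $\mathcal M$ with an independent copy of a Poisson--decorated Yule tree as in Lemma \ref{Yuleapprox}, and then conclude with Lemma \ref{Yuleapprox} together with the size estimate of Lemma \ref{Yuletruncate}.

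First I would treat the case $n=1$ (dropping the index $\ell$). By Lemma \ref{fromAtoZ} applied to $\mathscr J_{t_N} = \{(i,t_N)\}$, the untyped process $\#\mathscr A^{i,t_N}_{t_N-r}$ starts at $1$ and evolves via selective branching at per-lineage rate $s_N(1-\#\mathscr A^{i,t_N}_{t_N-r}/N)$, coalescence at per-pair rate $1/N$, and mutation at per-lineage rate $m_N$. After the time rescaling $r = f(N)\tau$, the branching and mutation rates become $\alpha(1-\#\mathscr A^{i,t_N}_{t_N-r}/N)$ and $\mu$, which in the limit $\#\mathscr A/N\to 0$ match those of a $\Pi$-decorated Yule tree as in Lemma \ref{Yuleapprox}; the per-pair coalescence rate becomes $f(N)/N = o(1)$. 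A dominating pure-birth Yule process with rate $\alpha$ has size at most $e^{2\alpha w_N}$ at rescaled time $w_N$ with probability $\to 1$; since $w_N = o(\ln N)$, this bound is $N^{o(1)}$, so both the finite-population correction and the expected total number of coalescences up to rescaled time $w_N$ (bounded by $w_N e^{4\alpha w_N} f(N)/(2N)$) vanish under $f(N) = o(N/\ln\ln N)$. A sandwiching argument in the spirit of Lemma \ref{Yuleapprox} therefore couples the decorated ASG with a $\Pi^{(N)}$-decorated Yule tree on rescaled height $[0,w_N]$, agreeing with probability $1-o(1)$. Lemma \ref{Yuleapprox} then gives $\P(L_{f(N)w_N}^{i,t_N}=k) \to p_k$, while the analogue of Lemma \ref{Yuletruncate} established in Lemma \ref{Yuleapprox} guarantees that on the event $\{L_{f(N)w_N}^{i,t_N} = k\}$ one has $A^{i,t_N}_{t_N-f(N)w_N}(k) \ge e^{(\alpha-\mu)w_N/2} \gg 1/\eps$ with probability $\to 1$, which handles the size condition.

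For general $n$, I would run $n$ independent $\Pi^{(N)}$-decorated Yule trees in parallel, coupled to the $n$ ASGs, and argue that with probability tending to $1$ no inter-ASG interaction occurs over the time window $[t_N-f(N)w_N, t_N]$. The only such interactions are selective or neutral arrows with head in one ASG and tail in another; the pairwise interaction rate is bounded by $(s_N + 1/N) \cdot \#\mathscr A^{i_\ell,\cdot} \cdot \#\mathscr A^{i_{\ell'},\cdot}$. Using once more that each individual ASG stays of size $N^{o(1)}$ up to rescaled time $w_N$, integration over $[0, f(N)w_N]$ yields a total expected number of interaction events of order $f(N) w_N (s_N + 1/N) N^{o(1)} = o(1)$. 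On the complementary event the $n$ ASGs (together with their mutation decorations) are distributed exactly as $n$ independent copies of the single-ASG construction, so the joint probability in the statement factorizes in the limit, giving $p_{k_1}\cdots p_{k_n}$.

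The main obstacle will be the careful error control in the coupling: one must verify both that each individual ASG can indeed be coupled to a decorated Yule tree up to rescaled height $w_N$ (where its minimum-load class is already of size $\gg 1/\eps$) and that the different ASGs do not interact. Both are secured by the delicate interplay between $w_N = o(\ln N)$, which keeps Yule growth sub-polynomial in $N$, and $f(N) = o(N/\ln \ln N)$, which keeps pairwise meeting rates negligible. Once these controls are in place, the lemma reduces to Lemma \ref{Yuleapprox} applied in parallel to $n$ independent trees.
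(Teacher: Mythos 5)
Your proposal is correct and follows essentially the same route as the paper: dominate each single-individual ASG by a rate-$\alpha$ Yule process so that its size stays at $N^{o(1)}$ up to rescaled height $w_N$, use this to show that internal coalescences, the finite-population correction to the branching rate, and cross-ASG meetings (at per-pair rate of order $1/N$) are all negligible, and then invoke Lemma \ref{Yuleapprox} together with the size statement of Lemma \ref{Yuletruncate} for $n$ independent decorated Yule trees. One small slip: the per-ordered-pair interaction rate is $\tfrac{s_N}{N}+\tfrac{1}{2N}$, not $s_N+\tfrac 1N$, so your displayed bound $f(N)w_N(s_N+1/N)N^{o(1)}$ should read $f(N)w_N(s_N/N+1/N)N^{o(1)}$ — with that correction the conclusion $o(1)$ goes through as in the paper.
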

\begin{proof}
First notice that if we take independent Yule trees $(\mathscr Y^{(\ell)}, 1 \leq \ell \leq n)$ as in Lemma \ref{Yuletruncate} and denote their minimal $\Pi$-load accordingly, we get from an application of Lemma \ref{Yuletruncate},
$$ \lim_{N\to \infty}\P\left(L_{w_N}^{(\ell)} = k_\ell, \,  e^{(\alpha-\mu)w_N/2} \leq \# \mathscr Y^{(\ell)}_{w_N}(k_\ell) \leq e^{2(\alpha-\mu)w_N}, 1 \leq \ell \leq n  \right) = p_{k_1}\cdots p_{k_n}.$$
According to Lemma \ref{Yuleapprox}, this result still holds true if the splitting rate $\alpha_N(h)$ and the decoration rate $\mu_N(h)$ of the Yule trees $(\mathscr Y^{(\ell)},1\leq \ell \leq n)$  may depend on $N$ and $h$ in such a way that \eqref{asrates} is fulfilled. 
Hence, to prove the lemma we need to show two properties on the processes $(A_{t_N-r}^{i_\ell,t_N}, r \geq 0)$
\begin{itemize}
\item The rates of the processes $\left(A_{t_N-hf(N)}^{i_\ell,t_N}, h \geq 0\right)$ indeed follow \eqref{asrates} 
\item They are asymptotically independent in the following sense
$$ \lim_{N \to \infty} \P \left( \bigcap_{1 \leq \ell \leq n}\mathscr A_{t_N-r}^{i_\ell,t_N}=\emptyset, \forall r \leq f(N)w_N \right)=1. $$
\end{itemize}
Recall the rates of the processes $(A_{t_N-r}^{i_\ell,t_N}, r \geq 0)$ as stated in Lemma \ref{fromAtoZ}. In particular, they imply that these processes are dominated by a Yule process with birth rate $\alpha/f(N)$. 
Hence, if we introduce the event
$$ \mathcal{E}^{(N,{\rm not big})}:=\left\{\sup_{r \leq f(N)w_N,1 \leq \ell \leq n}A_{t_N-r}^{i_\ell,t_N} \leq e^{2 \alpha w_N}\right\}, $$
and apply Lemma A.1 in~\cite{chazottes2016sharp}, we get
\begin{equation} \label{min_merge} \lim_{N \to \infty} \P \left(  \mathcal{E}^{(N,{\rm not big})} \right)=1. \end{equation}
The rate at which two currently disjoint ASG's of sizes $a_1$ and $a_2$ acquire a common point is $\frac {a_1 a_2}N$, cf. \eqref{r3}.  On the event $\mathcal{E}^{(N,{\rm not big})}$, the mean number of such events during the time $[0,f(N)w_N]$ for two processes among $(\mathscr A_{t_N-r}^{i_\ell,t}, 1 \leq \ell \leq n)$ is thus bounded by
$$ \frac{1}{2N} (2e^{2 \alpha w_N})(2e^{2 \alpha w_N}-1)w_N,$$
which converges to 0 as $N\to \infty$. Applying the Markov inequality to the number of such events and using \eqref{min_merge}  completes the proof.
\end{proof}
\begin{prop}\label{propchaos1} Let the type configurations $\eta^{(N)}(i,t)$, $i\in [N]$, $t\ge 0$, be as  in Section~\ref{GRep}, with $\eta^{(N)}(i,0):= 0$, and let $K^\ast_N(t)$ be as in~\eqref{besttype}. Let $(t_N)$ be a sequence of time points which obeys \eqref{condtN}. Choose a sample size $n\in \N$, let $i_1, \ldots, i_n \in \mathbb N$ be pairwise distinct, and $k_1,\ldots, k_n \in \N_0$. Then, with $(p_k)_{k \in \N_0}$ given by the recursion \eqref{recursion},
$$\lim_{N\to \infty} \P\left(\eta^{(N)}(i_1,t_N)-K_N^\ast(t_N)=k_1, \ldots, \eta^{(N)}(i_n,t_N)-K_N^\ast(t_N)=k_n\right)= p_{k_1}\cdots p_{k_n}.$$
\end{prop}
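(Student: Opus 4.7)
I will evaluate the type of each sampled individual $(i_\ell, t_N)$ via the \emph{flow of types} identity applied at an intermediate time $\underline t_N := t_N - f(N) w_N$, where $w_N \to \infty$ with $w_N = o(\ln N)$. Three ingredients will be glued together: on the short window $[\underline t_N, t_N]$ the $n$ ASGs of the sampled individuals are, by Lemma~\ref{lem_indep}, close to $n$ independent Poisson-decorated Yule trees and acquire minimum-$\mathcal M$-loads that are asymptotically $(p_k)_{k\in\N_0}$-distributed; on the long window $[0, \underline t_N]$ the best type has already stabilized and, by Proposition~\ref{bigset}, is detected by any sufficiently large $\mathscr P_{\underline t_N}$-measurable set of individuals living at time $\underline t_N$; and $[\underline t_N, t_N]$ is so short on the click-scale $f(N)\theta_N$ that $K_N^\ast$ does not jump across it, thanks to Theorem~\ref{thmclicks}.

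Fix $\eps > 0$ and introduce the event
$$\mathcal B_N := \bigcap_{\ell=1}^n \Bigl\{ L^{i_\ell, t_N}_{f(N) w_N} = k_\ell,\ \#\mathscr A^{i_\ell, t_N}_{\underline t_N}(k_\ell) \ge 1/\eps \Bigr\},$$
whose probability tends to $p_{k_1} \cdots p_{k_n}$ by Lemma~\ref{lem_indep}. The flow identity from Section~\ref{PAP} reads
$$\eta^{(N)}(i_\ell, t_N) = \min_{v \in \mathscr A^{i_\ell, t_N}_{\underline t_N}} \bigl\{ \eta^{(N)}(v) + d_{\mathcal M}(v, (i_\ell, t_N)) \bigr\},$$
and on $\mathcal B_N$ each term in the minimum is at least $K_N^\ast(\underline t_N) + k_\ell$, giving the lower bound $\eta^{(N)}(i_\ell, t_N) \ge K_N^\ast(\underline t_N) + k_\ell$. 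For the matching upper bound, I would apply Proposition~\ref{bigset} to the $\mathscr P_{\underline t_N}$-measurable set $\mathscr J^{(N,\ell)} := \mathscr A^{i_\ell, t_N}_{\underline t_N}(k_\ell)$: on $\mathcal B_N$ it has cardinality at least $1/\eps$, and each of its elements sits at $\mathcal M$-distance exactly $k_\ell$ from $(i_\ell, t_N)$. Proposition~\ref{bigset} then yields, up to an error of asymptotic probability $\delta(\eps)$, a $v^\ast \in \mathscr J^{(N,\ell)}$ with $\eta^{(N)}(v^\ast) = K_N^\ast(\underline t_N)$, and hence $\eta^{(N)}(i_\ell, t_N) \le K_N^\ast(\underline t_N) + k_\ell$. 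Both bounds together produce equality.

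It remains to replace $K_N^\ast(\underline t_N)$ by $K_N^\ast(t_N)$: the interval $[\underline t_N, t_N]$ has length $f(N) w_N$, whereas by Theorem~\ref{thmclicks} clicks are separated on the scale $f(N) \theta_N$ with $\ln \theta_N \sim c\, N/f(N) \gg \ln N \gg w_N$, so $\P(K_N^\ast(\underline t_N) \ne K_N^\ast(t_N)) \to 0$. Combining the pieces and then letting $\eps \downarrow 0$ completes the argument. The main technical obstacle I anticipate is applying Proposition~\ref{bigset} simultaneously to the $n$ sets $\mathscr J^{(N,\ell)}$ when the size bound $\#\mathscr J^{(N,\ell)} \ge 1/\eps$ only holds on $\mathcal B_N$ rather than with probability tending to $1$. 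A clean workaround is to condition on $\mathcal B_N$ (the merging argument underlying Proposition~\ref{bigset} uses only $\mathscr P_{\underline t_N}$-measurability and a pathwise size lower bound), or equivalently to couple each $\mathscr J^{(N,\ell)}$ off $\mathcal B_N$ with the trivially large dummy set $[N]\times\{\underline t_N\}$ so that the size hypothesis of Proposition~\ref{bigset} holds unconditionally; all the remaining errors then combine into a single $\delta(\eps) \to 0$ term.
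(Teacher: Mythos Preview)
Your proposal is correct and follows essentially the same route as the paper: split at $\underline t_N = t_N - f(N)w_N$, use Lemma~\ref{lem_indep} for the short window, Proposition~\ref{bigset} to anchor the minimum-load ancestors at $K_N^\ast(\underline t_N)$, and Theorem~\ref{thmclicks} to pass from $K_N^\ast(\underline t_N)$ to $K_N^\ast(t_N)$. The only difference is the order of operations. The paper applies Proposition~\ref{bigset} to the \emph{unconditional} minimum-load set $\bar{\mathscr A}^{i_\ell,t_N}_{\underline t_N}$ (which is large with probability tending to $1$, summing \eqref{truncate} over $k$), deduces $\eta^{(N)}(i_\ell,t_N)=K_N^\ast(t_N)+L^{i_\ell,t_N}_{w_Nf(N)}$ with probability $\to 1$ for each $\ell$, and only then invokes Lemma~\ref{lem_indep} for the joint law of the $L$'s; this sidesteps the size-hypothesis issue you flagged. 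Your dummy-set fix is perfectly valid, but the paper's ordering makes it unnecessary.
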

\begin{proof} Let $w_N$ be as in Lemma \ref{lem_indep}, $\underline t_N:= t_N-w_Nf(N)$, and $(u_N)$ satisfy $(u_N-t_N)/(\theta_N f(N)) \to \infty$ as $N\to \infty$. Applying Lemma \ref{bigset} to the set $\mathscr J^{(N)}_{\underline t_N}:={\bar {\mathscr A}}^{i_\ell,t_N}_{\underline t_N}$  of minimum load potential ancestors at time~$\underline t_N$ of the individual $(i_\ell,t_N)$ (see Definition~\ref{colouredASG}) yields
$$\P\left(\min\{\eta^{(N)}(v): v\in {\bar {\mathscr A}}^{i,t_N}_{\underline t_N} \} = K_N^\ast(\underline t_N)\right) \to 1 \quad \mbox{as  } N\to \infty.$$
From Theorem \ref{thmclicks} we know that
$$\P(K_N^\ast(t_N) = K_N^\ast(\underline t_N)) \to 1 \quad \mbox{as } N\to \infty.$$
Hence we conclude, using the notation \eqref{defKt1}, 
$$\P\left(\eta^{(N)}(i_\ell, t_N) = K_N^\ast(t_N)+ L^{i_\ell, t_N}_{w_Nf(N)} \right) \to 1 \quad \mbox{as } N\to \infty.$$
An application of Lemma \ref{lem_indep} thus concludes the proof.
\end{proof} 
In accordance with the graphical representations of $\xi^{(N)}(t)$ and $K_N^\ast(t)$ in \eqref{defXk1} and \eqref{repKN}, the empirical type frequency profile seen from the currently best type (defined in  \eqref{defxi}) is represented as
$$X_k^{(N)}(t) = \sum_{i=1}^N \delta_{\eta^{(N)}(i,t)-K_N^\ast(t)}(k), \quad k \in \N_0,\, t\ge 0.$$
The following corollary concludes our  proof of Theorem \ref{thmprofile} a).
\begin{cor} For $(t_N)$  obeying \eqref{condtN}, and all $k \in \mathbb N_0$,  
 $X_k^{(N)}(t_N)$ converges  in probability \mbox{as $N\to \infty$}  to $p_k$, with
\mbox{$(p_k)_{k \in \N_0}$} given by \eqref{recursion}.
\end{cor}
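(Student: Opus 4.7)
The plan is to deduce the corollary from Proposition \ref{propchaos1} by the standard propagation-of-chaos to law-of-large-numbers argument: one-marginal convergence gives convergence in expectation, and two-marginal factorization gives vanishing variance, after which Chebyshev delivers convergence in probability.

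First I would rewrite
\[
X_k^{(N)}(t_N) = \frac{1}{N}\sum_{i=1}^N \mathbf{1}\{\eta^{(N)}(i,t_N)-K_N^\ast(t_N)=k\},
\]
which matches $\xi_{K_N^\ast(t_N)+k}^{(N)}(t_N)$ via \eqref{defxi} and \eqref{defXk1}. Since the joint law of $(\eta^{(N)}(i,t_N))_{i\in[N]}$ is exchangeable (this is clear from the graphical construction, which is symmetric in the labels in $[N]$), Proposition \ref{propchaos1} applied with $n=1$ and any single label $i_1$ yields
\[
\mathbb{E}\!\left[X_k^{(N)}(t_N)\right] = \mathbb{P}\!\left(\eta^{(N)}(1,t_N)-K_N^\ast(t_N)=k\right) \longrightarrow p_k.
\]

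Next I would compute the second moment using Proposition \ref{propchaos1} with $n=2$. Splitting the double sum into diagonal and off-diagonal contributions gives
\[
\mathbb{E}\!\left[X_k^{(N)}(t_N)^2\right] = \frac{1}{N}\,\mathbb{E}\!\left[X_k^{(N)}(t_N)\right] + \frac{N-1}{N}\,\mathbb{P}\!\left(\eta^{(N)}(1,t_N)-K_N^\ast(t_N)=k,\ \eta^{(N)}(2,t_N)-K_N^\ast(t_N)=k\right).
\]
The diagonal term is $O(1/N)$, while Proposition \ref{propchaos1} (applied with $n=2$, $i_1=1$, $i_2=2$, $k_1=k_2=k$) shows that the two-point probability converges to $p_k^2$. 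Hence $\mathbb{E}[X_k^{(N)}(t_N)^2] \to p_k^2$, so
\[
\operatorname{Var}\!\left(X_k^{(N)}(t_N)\right) = \mathbb{E}\!\left[X_k^{(N)}(t_N)^2\right] - \mathbb{E}\!\left[X_k^{(N)}(t_N)\right]^{\!2} \longrightarrow 0.
\]

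Chebyshev's inequality then gives $X_k^{(N)}(t_N) \to p_k$ in probability for each $k\in\mathbb N_0$, which is the claim. The entire argument is essentially routine once Proposition \ref{propchaos1} is in hand; there is no genuine obstacle here, since exchangeability trivializes the uniformity issues in the sums and dominated convergence is automatic because each indicator is bounded by $1$. The substantive work (the quick-merging of ASGs, the Poisson-decorated Yule approximation, and the identification of the limit weights $(p_k)$ with the fixed point of \eqref{recursion}) is packaged into Proposition \ref{propchaos1}, so the corollary is just the final empirical-average step.
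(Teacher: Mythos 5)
Your proposal is correct and is essentially the paper's own argument: the paper likewise computes the second moment via the two-point case of Proposition \ref{propchaos1}, notes that it converges to $p_k^2$ (the square of the limiting first moment), and concludes that the variance vanishes. Your version merely spells out the diagonal/off-diagonal splitting and the $n=1$ first-moment step that the paper leaves implicit.
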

\begin{proof}For all $k \in \mathbb N_0$, the second moment $\mathbb E[X_k^{(N)}(t_N)^2]$ is asymptotically equal to the probability that, for  $J_1, J_2$  randomly sampled from $[N]$, the types  $\eta^{(N)}(J_1,t_N)-K_N^\ast(t_N)$ and $\eta^{(N)}(J_2,t_N)-K_N^\ast(t_N)$ both are equal to $k$. Proposition \ref{propchaos1} tells that this probability converges to $p_k^2$ as $N\to \infty$. By the same proposition,  $p_k^2$ is the limit of $\mathbb E[X_k^{(N)}(t_N)]^2$ as $N\to \infty$, hence the variance of $X_k^{(N)}(t_N)$ vanishes as $N\to \infty$.
\end{proof}
\section{The quasi-stationary type frequency profile: Proof of Theorem  \ref{thmprofile}b)-e)}\label{proofbtod}
\paragraph{Part b)}Choosing $k=0$ in \eqref{systeq} and using the assumption that $p_0+p_1+ \cdots = 1$, we see that  \eqref{systeq} implies $p_0=1-\tfrac\mu\alpha$. 
 The equivalence of \eqref{recursion} and  \eqref{systeq}  is then  immediate from the identity 
$$-p_k +1-2\sum_{k'=0}^{k-1}p_{k'} = \sum_{k'=k+1}^\infty p_{k'} - \sum_{k'=0}^{k-1} p_{k'}.$$
\paragraph{Part c)} The characterisation of $(p_k)_{k \in \N_0}$ in terms of minimal Poisson loads of infinite lineages in a Yule tree has been proved in Proposition \ref{pi_eqals_p} b).

To see the equivalence to the characterisation via the eventual minimum in a branching one-sided random walk, we think of the latter  as a family of random walks, indexed by the infinite lineages $\mathfrak l$ of a  Yule tree $\mathscr T$ with branching rate $\alpha$.  All the random walkers move on $\N_0$, starting at $0$ and jumping from $k$ to $k+1$ at rate $\mu$. The (correlated) dynamics of the walkers can thus be seen as driven by a Poisson point process $\Pi$ with rate $\mu$ on $\mathscr T$:
each point of $\Pi$ induces an upwards jump by one, and the (continuous) time of the walks corresponds to the height in $\mathscr T$. Thus the position at time $t$ of the walker that is indexed by an infinite lineage $\mathfrak l$ of $\mathscr T$ is the number of Poisson points which $\mathfrak l$ carries between heights $0$ and $t$. Denoting by $M(t)$ the minimum of the position of all the walkers alive at time $t$, we see that $M(t)$ increases to the $\N_0 \cup \{\infty\}$-valued random variable  $K:= \min\{\Pi(\mathfrak l):\mathfrak l \in \mathscr T\}$, i.e. the minimum over the numbers of Poisson points carried by the infinite lineages in $\mathscr T$.
\paragraph{Part d)}
This is part of the assertion of Proposition \ref{pi_eqals_p}.
\paragraph{Part e)}
Abbreviating $1- \mu/\alpha = \beta$ we get from \eqref{recursion}:
$$p_0 = \beta, \quad p_1= \sqrt{\left(\frac \beta 2\right)^2+\beta(1-\beta)}-\frac \beta 2.$$
Therefore, $p_0^2 \ge p_1^2 \Longleftrightarrow 9\beta^2 \ge \beta^2 + 4\beta(1-\beta) \Longleftrightarrow 2\beta \ge 1-\beta \Longleftrightarrow \beta \le 1/3,$
with equality iff $\beta =  1/3$.
This proves the assertions (i) and (ii). For checking (iii), we define 
$$\mathfrak k := \max\left\{k\in \N_0: \sum_{k'< k}p_{k'}  \le  \sum_{k'> k}p_{k'}\right\}.$$
For $k:= \mathfrak k +1$, the l.h.s. of \eqref{systeq} is strictly negative, hence $p_{\mathfrak k +1} < p_{\mathfrak k}$. Since the l.h.s. of \eqref{systeq} is strictly decreasing in $k$ and thus can be zero for at most one $k$, it must be strictly positive for $k:=  \mathfrak k -1$, hence, again because of \eqref{systeq} we have $p_{\mathfrak k -2} < p_{\mathfrak k -1}$.\\
The claim concerning the geometric tail of $(p_k)_{k \in \N_0}$ follows immediately from \eqref{geometrictails} combined with Proposition \ref{pi_eqals_p}b).
This concludes the proof of Theorem  \ref{thmprofile}.

\section{Proof of Lemmata \ref{lem_coupling1}, \ref{exp_rate} and \ref{lem_fast_ext}}

This section is dedicated to the study of the process $(Z^{(N)}(r), r \geq 0)$. The proof of Lemma \ref{lem_coupling1} relies essentially on the fact that a stochastic Lotka-Volterra process with large carrying capacity $\bar K$ resembles a supercritical process when its size is small and once close to its carrying capacity, stays in a neighboorhoud of the latter during any time of order $\ln \bar K$. This last property is stated in Lemma C.1 in \cite{champagnat2021stochastic}, and will be instrumental in the following proof.
\begin{proof}[Proof of Lemma \ref{lem_coupling1}]\label{ProofofLemmas}
We will prove this result by induction. Let us first consider the case $k=0$ and introduce, for $\eps \in (0,\alpha-\mu)$, the notation
$$ \mathfrak{e}(\alpha,\mu,\eps):= 2(\alpha - \mu - \eps). $$
Notice that for $N$ large enough and $n\leq \mathfrak{e}(\alpha,\mu,\eps)N/f(N)$, the birth and death rates defined in \eqref{cor_rates_01} obey
$$ b_0(n,N) \geq \frac{\alpha}{f(N)} \left( 1- \frac{\eps}{4\alpha} \right) \quad \text{and} \quad d_0(n,N) \leq \frac{\alpha}{f(N)} \left( 1- \frac{\eps}{2\alpha} \right). $$
Thus, if $Z_0(0) \leq \mathfrak{e}(\alpha,\mu,\eps)N/f(N)$, before its hitting time of $\mathfrak{e}(\alpha,\mu,\eps)N/f(N)$, $Z_0$ stochastically dominates a supercritical branching process with growth rate
$$ \frac{\alpha}{f(N)}\frac{\eps}{4\alpha}=\frac{\eps}{4f(N)}$$
and initial state $\lfloor 1/\eps\rfloor$. By \cite[Lemma C.1] {champagnat2021stochastic} the probability that the latter reaches the size $\mathfrak{e}(\alpha,\mu,\eps){N}/{f(N)}$ by time $6f(N) \ln N/\eps$ is close to 1, hence 
$$ \liminf_{N \to \infty} \P\left( \inf \left\{r \geq 0, Z_0(r)\geq\mathfrak{e}(\alpha,\mu,\eps)\frac{N}{f(N)}\right\} \leq \frac{6}{\eps}f(N) \ln N \Big|Z_0(0) \geq 1/\eps\right)=1-\delta(\eps), $$
where $\delta(\eps) \to 0$ as $\eps \to 0$.
A similar comparison, now {\em above} the threshold $\mathfrak{e}(\alpha,\mu,-\eps)$  and with a suitable birth-and-death process with {\em negative} drift, shows that 
$$ \lim_{N \to \infty} \P\left( \inf \left\{r \geq 0, Z_0(r)\leq\mathfrak{e}(\alpha,\mu,-\eps)\frac{N}{f(N)}\right\} \leq \frac{3}{\eps}f(N) \ln N \right)=1. $$
$Z_0$ thus takes a time of order $f(N)\ln N$ to reach the interval $\left[ \mathfrak{e}(\alpha,\mu,\eps),\mathfrak{e}(\alpha,\mu,-\eps)\right] $. We now prove that it then remains in this interval for a longer period on that timescale. 

By definition, the process $Z_0$ cannot exceed $N$. As a consequence, for any $r\geq 0$,
$$ b_0(Z_0(r),N) \leq  \frac{\alpha}{f(N)} \quad \text{and} \quad  d_0(Z_0(r),N)  \geq \frac{\alpha}{f(N)}  \frac{\mu}{\alpha}= \frac{\mu}{f(N)}. $$
Thus, applying Lemma C.1 in \cite{champagnat2021stochastic} (after a time change with the factor $f(N)$), we obtain that for any $R<\infty$, 
\vspace{-0.3cm}
$$  \lim_{N \to \infty} \P\left( \sup_{r \leq R f(N) \ln N} \left\{Z_0(r)\right\} \leq \eps N \Big|Z_0(0) \leq \mathfrak{e}(\alpha,\mu,-\eps)\frac{N}{f(N)}\right)=1. $$
Moreover, as long as $Z_0(r) \leq \eps N$, the per capita birth rate of $Z_0$ may be bounded as follows:
$$ \frac{\alpha}{f(N)} (1-\eps) \leq b_0(Z_0(r),N) \leq \frac{\alpha}{f(N)},  $$
and since $-{\eps}/{f(N)} +  1/{2N} < 0$ for large $N$, then the per capita death rate of $Z_0$ satisfies
$$ \frac{\alpha}{f(N)} \left(\frac{\mu-\eps}{\alpha} + \frac{Z_0(r) f(N)}{2 \alpha N}\right) \leq d_0(Z_0(r),N) \leq \frac{\alpha}{f(N)} \left(\frac{\mu}{\alpha} + \frac{Z_0(r) f(N)}{2 \alpha N}\right).  $$
Applying again Lemma C.1 in \cite{champagnat2021stochastic}, we obtain that for any $R<\infty$,
\begin{multline*}  \lim_{N \to \infty} \P\Big(\frac{f(N)}{N}Z_0(r) \in \left[ \mathfrak{e}(\alpha,\mu,2\eps),\mathfrak{e}(\alpha,\mu,-2\eps)\right] \mbox{ for all }  r \leq R f(N) \ln N  \\ \Big| \frac{f(N)}{N} Z_0(0) \in \left[ \mathfrak{e}(\alpha,\mu,\eps),\mathfrak{e}(\alpha,\mu,-\eps)\right] \Big)=1. \end{multline*}
This proves the lemma for $k=0$ with $C_0(\eps)=6/\eps$ and $C_0=4\eps$.

Let us now take $g \in \N$ and assume that \eqref{containment_B12} holds true for $k=0,...,g-1$. Jointly for all these $k=0,...,g-1$ we can take a time frame (which may be as long as we want on the $f(N)\ln N$-time scale) on which $f(N)Z_k/N \in [\bar{n}_k - C_k\eps, \bar{n}_k + C_k\eps]$, and $Z_g \leq 4\alpha N/f(N) $. During this time interval, the  birth rate of the $Z_g$-population is larger than 
$$\frac{\mu}{f(N)}\left( \bar{n}_{g-1}- C_{g-1}\eps \right) \frac{N}{f(N)}+ Z_{g}\frac{\alpha}{f(N)}\left( 1- \frac{4\alpha}{f(N)} \right)  $$
and smaller than 
\vspace{-0.3cm}
$$\frac{\mu}{f(N)}\left( \bar{n}_{g-1}+ C_{g-1}\eps \right) \frac{N}{f(N)}+ Z_{g}\frac{\alpha}{f(N)}.  $$
Likewise, the death rate is larger than
$$  Z_g\left( \frac{\mu}{f(N)}+ \frac{Z_g-1}{2N} + \sum_{k=0}^{g-1}\left( \bar{n}_{k}- C_{k}\eps \right) \frac{N}{f(N)} \right)$$ 
\vspace{-0.2cm}
and smaller than
\vspace{-0.2cm}
$$  Z_g\left(\frac{\mu}{f(N)}+ \frac{Z_g-1}{2N} + \sum_{k=0}^{g-1}\left( \bar{n}_{k}+ C_{k}\eps \right) \frac{N}{f(N)}. \right)$$
The remaining part of the proof is the same as in the case $g=0$, again with an application of \cite[Lemma~C.1]{champagnat2021stochastic}.
\end{proof}
\begin{proof}[Proof of Lemma \ref{exp_rate}] 
Equation \eqref{QSD} (for some factor of $r$ in the exponent) follows from basic properties of quasi-stationary distributions (see for instance  Proposition 2 in \cite{meleard2012quasi}).

In order to show that the exponent has the claimed property, we will couple the process $Z_0$ with logistic birth and death processes, applying results from \cite{chazottes2016sharp}. To see this in detail, recall the definition of $b_0(.,N)$ and $d_0(.,N)$ in \eqref{Z0rates} and choose $\eps > 0$. Then for $N$ large enough and  any $n \in \N$, 
$$b_0(n,N) \leq \frac{\alpha n}{f(N)} \quad \text{and} \quad \frac{n}{f(N)} \left( \mu- \eps+ \frac{n f(N)}{2N}\right) \leq d_0(n,N) \leq \frac{n}{f(N)} \left( \mu+ \frac{n f(N)}{2N}\right), $$
and for $n \leq \eps N/\alpha$, 
\vspace{-0.2cm}
$$(\alpha-\eps)\frac{ n}{f(N)}\leq b_0(n,N).$$
Now let us consider two auxiliary birth and death processes $Z_0^{(+,N)}$ and $Z_0^{(-,N)}$, where 
\begin{equation*} 
Z_0^{(+,N)} \mbox{ has jump rates } \left\{ \begin{array}{lll}
n b_0(+,n,N):= \frac{\alpha n}{f(N)}  &\mbox{from } n \mbox{ to } n+1 ,\\
n d_0(+,n,N):= \frac{n}{f(N)} \left( \mu- \eps+ \frac{n f(N)}{2N}\right)&\mbox{from } n \mbox{ to } n-1,
\end{array}
\right .
\end{equation*}
\begin{equation*}
Z_0^{(-,N)} \mbox{ has jump rates } \left\{ \begin{array}{lll}
n b_0(-,n,N):= (\alpha-\eps)\frac{ n}{f(N)}  &\mbox{from } n \mbox{ to } n-1 ,\\
n d_0(-,n,N):= \frac{n}{f(N)} \left( \mu+ \frac{n f(N)}{2N}\right)\phantom{AA}&\mbox{from } n \mbox{ to } n-1.
\end{array}
\right .
\end{equation*}
In addition we consider the process $Z_0^{(R,-,N)}$ which has the same rates as the process $Z_0^{(-,N)}$ on the set $\{0,\ldots, \lfloor\eps N/\alpha\rfloor-1\}$ and is reflected below at $N_0 := \lfloor\eps N/\alpha\rfloor$, i.e. jumps from $N_0$ to $N_0-1$ at rate $N_0 d_0(-,N_0,N)$ but never jumps from $N_0$ to $N_0 +1$ (see Figure~\ref{fig:boat5}).
\begin{figure}[h!]
\begin{center}
  \includegraphics[width=.8\linewidth]{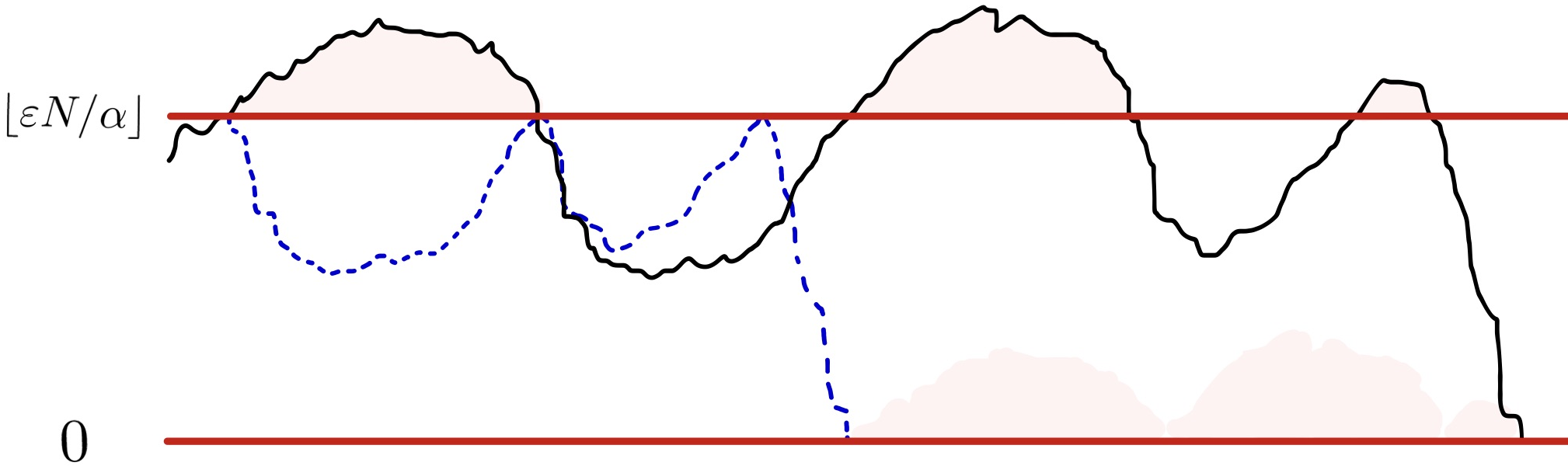}
  \end{center}
  \caption{This figure schematically displays $Z_0^{(-,N)}$ (path drawn solid) as well as  $Z_0^{(R,-,N)}$ (path drawn dashed), illustrating how the total length of the excursioins  of  $Z_0^{(-,N)}$ above level $\lfloor \eps N/\alpha\rfloor$ determines the difference of the times at which the two processes hit~$0$.}
  \label{fig:boat5}
\end{figure}

We can couple $Z_0$ and $Z_0^{(+,N)}$ such that for any $r \ge 0$, 
$$ Z_0(r) \leq Z_0^{(+,N)}(r). $$
Denoting by $\nu_N^{(\pm)}$ the quasi-stationary distributions of $Z_0^{(\pm,N)}$ and by  $\theta_N^{(\pm)}$ the real numbers such that
$$\P_{\nu^{(\pm)}_N} \left(Z_0^{(\pm,N)}(r)>0  \right) = e^{-r/f(N)\theta_N^{(\pm)}}, $$
we thus obtain an upper bound for $\theta_N$, namely
$ \theta_N \leq \theta_N^{(+)}$.
Indeed $Z_0$ (resp. $Z_0^{(+,N)}$) with an initial state of order $N/f(N)$ may be coupled with $Z_0$ (resp. $Z_0^{(+,N)}$) with initial distribution $\nu_N$ (resp. $\nu_N^{(+)}$) in such a way that they coincide after a time of order $f(N)\ln N$ (the proof is similar to that of Lemma \ref{lem_coupling1}).

We can also couple $Z_0$ and $Z_0^{(R,-,N)}$ such that for any $r$, 
\begin{equation} \label{coupleZ0below}
Z_0^{(R,-,N)}(r) \leq Z_0(r).
\end{equation}
Indeed, this relation is fulfilled if $Z_0(r) \ge  \lfloor\eps N/\alpha\rfloor$, since $Z_0^{(R,-,N)} \leq \lfloor\eps N/\alpha\rfloor$ by definition.  As long as $Z_0^{(R,-,N)} \leq \lfloor\eps N/\alpha\rfloor$, however, the process $Z_0^{(R,-,N)}$ has a smaller birth rate and a larger death rate than the process $Z_0$. 

The coupling \eqref{coupleZ0below} allows us to bound the mean extinction time of $Z_0$ by that of $Z_0^{(R,-,N)}$. In order to estimate the latter we will prove that the mean extinction times of the processes $Z_0^{(R,-,N)}$ and $Z_0^{(-,N)}$ are of the same order.
We will then apply results of \cite{meleard2012quasi} to get an equivalent of $\theta^{(-)}_N$, the mean extinction time of the process $Z_0^{(-,N)}$.
Let us consider a realization of the process $Z_0^{(-,N)}$, and denote by  $(\mathscr{T}^{up}_i,\mathscr{T}^{down}_i, i \in \N_0)$ the successive entrance and exit times of $[\lfloor\eps N/\alpha\rfloor+1,\infty)$ by the process $Z_0^{(-,N)}$, defined recursively as follows: 
\begin{eqnarray*}
\mathscr{T}^{up}_0&&=\quad \mathscr{T}^{down}_0 \, = \, 0,\\
 \mathscr{T}^{up}_i&&:=\quad  \inf \{ r \geq \mathscr{T}^{down}_{i-1}, Z_0^{(-,N)}(r)=\lfloor\eps N/\alpha\rfloor+1 \},  \\
\mathscr{T}^{down}_i&&:=\quad  \inf \{ r \geq \mathscr{T}^{up}_{i}, Z_0^{(-,N)}(r)=\lfloor\eps N/\alpha\rfloor \}, \qquad \qquad \qquad  i \in \N.
\end{eqnarray*}
Then if we ignore the excursions above $\lfloor\eps N/\alpha\rfloor$ and glue together the points beginning and ending these excursions (that is the point $\mathscr{T}^{up}_i$ and $\mathscr{T}^{down}_i$), we obtain a realization of the process $Z_0^{(R,-,N)}$, which is almost surely smaller than the process $Z_0^{(-,N)}$ at any time. To make this ``glueing of excursions'' formal, let us introduce the process $Z_0^{(C,-,N)}$ via
$$ Z_0^{(C,-,N)}(r):= Z_0^{(-,N)}\left(r- \sum_{j=1}^i \left(\mathscr{T}^{down}_j-\mathscr{T}^{up}_j\right)\right) $$
for $r$ in the random time interval
$$ \mathscr{T}^{up}_i-\sum_{j=1}^{i-1} \left(\mathscr{T}^{down}_j-\mathscr{T}^{up}_j\right) \leq r \leq \mathscr{T}^{up}_{i+1}- \sum_{j=1}^i \left(\mathscr{T}^{down}_j-\mathscr{T}^{up}_j\right). $$
This process obeys
$$ Z_0^{(C,-,N)} \stackrel{\mathcal{L}}{=}Z_0^{(R,-,N)} \quad \text{and} \quad Z_0^{(C,-,N)}(r) \leq Z_0^{(-,N)}(r), \quad \forall r \geq 0, \quad a.s. $$
Thus, if we prove that the mean time spent above $\lfloor\eps N/\alpha\rfloor$ by the process $Z_0^{(-,N)}$ is negligible with respect to its mean extinction time $\theta_N^{(-)}$, we may deduce that the mean extinction time of the process $Z_0^{(R,-,N)}$ is equivalent to $\theta_N^{(-)}$, which in turn entails that $\theta_N^{(-)}\leq \theta_N.$ 

As $\nu_N^{(-)}$ is a quasi-stationary distribution, we have for every $r\geq0$ (see for instance \cite{meleard2012quasi}),
\begin{align*} \nu_N^{(-)} (\lfloor\eps N/\alpha\rfloor ,\infty) &= \P_{\nu_N^{(-)}}\left( Z_0^{(-,N)}(r) \in (\lfloor\eps N/\alpha\rfloor ,\infty)\right)\left(\P_{\nu_N^{(-)}}\left(\mathcal{H}^{(N)}_0>r\right)\right)^{-1}\\
&= \P_{\nu_N^{(-)}}\left(Z_0^{(-,N)}(r) \in (\lfloor\eps N/\alpha\rfloor ,\infty)\right)e^{r/f(N)\theta_N^{(-)}}. \end{align*}
Hence, the expected time spent by the process $Z_0^{(-,N)}$ in the set $(\lfloor\eps N/\alpha\rfloor ,\infty)$ is
\begin{eqnarray*}
\E_{\nu_N^{(-)}} \left[ \int_0^\infty \mathbf{1}_{\{Z_0^{(-,N)}(r)>\lfloor\eps N/\alpha\rfloor \}}dr \right] =\int_0^\infty  \P_{\nu_N^{(-)}} \left( Z_0^{(-,N)}(r)>\lfloor\eps N/\alpha\rfloor  \right)dr \\
 = \nu_N^{(-)} (\lfloor\eps N/\alpha\rfloor ,\infty)\int_0^\infty e^{-r/f(N)\theta_N^{(-)}}dr
 = \nu_N^{(-)} (\lfloor\eps N/\alpha\rfloor ,\infty)f(N)\theta_N^{(-)},
\end{eqnarray*}
where we applied Fubini's Theorem.

Now from Theorem 3.7 in \cite{chazottes2016sharp}, we get that the total variation distance between $\nu_N^{(-)}$ and a Gaussian law centered at $2(\alpha-\mu-\eps)N/f(N)$ and with variance $N/((\alpha-\eps)f(N))$ is of order $\sqrt{f(N)/N}$. We deduce that
$$ \lim_{N \to \infty} \nu_N^{(-)} (\lfloor\eps N/\alpha\rfloor ,\infty)=0, $$
which allows us to conclude that
$\theta_N^{(-)}\leq \theta_N\leq \theta_N^{(+)}.$

A direct application of Remark 3.3 in \cite{chazottes2016sharp} yields that, as $N\to \infty$,
\begin{equation*}   \theta_N^{(-)} \sim f(N) (\alpha-\eps-\mu)^2 \sqrt{2\pi \mu\frac{f(N)}{2N}}\exp\left(2\left(\alpha -\eps-\mu + \mu \ln \frac{\mu}{\alpha-\eps}\right)\frac{N}{f(N)}\right) 
\end{equation*}
and
\begin{equation*}   \theta_N^{(+)} \sim f(N) (\alpha+\eps-\mu)^2 \sqrt{2\pi (\mu-\eps)\frac{f(N)}{2N}}\exp\left(2\left(\alpha +\eps-\mu + (\mu-\eps) \ln \frac{\mu-\eps}{\alpha}\right)\frac{N}{f(N)}\right).
\end{equation*}
 This concludes the proof of \eqref{order_time_ext}, and thus completes the proof of Lemma \ref{exp_rate}.
\end{proof}

\begin{proof}[Proof of Lemma \ref{lem_fast_ext}]
To prove {part a)} of the lemma, we will zoom on a small window before the extinction time of $Z_0=Z_0^{(N)}$, namely after the last hitting time of $1/\eps^2$. The strategy of the proof consists in showing that this time is short ({having} a duration of order $\ln 1/\eps$) and that {on the way to} extinction, $Z_0$ will feed the $Z_1$ population {by producing many individuals that carry one mutation}.

To begin with, we {consider} the process $Z_0$ conditioned to reach $0$ before ${\lfloor}1/\eps^2{\rfloor}$. {Its transition matrix  $\widehat{P}$ arises from that of the unconditioned $Z_0$ as the harmonic transform}
$$ \widehat{P}(i,i-1):= \frac{h(i-1)}{h(i)}\frac{\mu + \frac{f(N)(n-1)}{2N}}{\alpha \left(1- \frac{i}{N}\right)+\mu + \frac{f(N)(n-1)}{2N}}, $$
where 
\begin{equation} \label{harm}
h(i):= \P\left(H^{(N)}_0 < H^{(N)}_{\lfloor 1/\eps^2\rfloor}\Big|Z_0(0)=i\right), \quad { i=0,\ldots, \lfloor 1/\eps^2\rfloor},
\end{equation}
and $H^{(N)}_z$ is the first hitting time of $z$, cf. \eqref{hitting_Z0i}. The form of the harmonic functions for birth and death chains in terms of ratios of upward and downward jump rates is well known, cf. \cite[Example 1.3.4]{norris1998markov}. For \eqref{harm} this leads to the expression
$$ h(i)= 1- (1-h(1))\sum_{j=1}^i \prod_{k=1}^{j-1}\rho_k, \quad
\text{where} 
\quad \rho_k:= \frac{\mu + \frac{f(N)(k-1)}{2N}}{\alpha \left(1- \frac{k}{N}\right)}, \quad 1 \le k < N. $$
Hence we need to study the quantity
\begin{equation} \label{expr_harmo} \frac{h(i-1)}{h(i)}= \frac{1- (1-h(1))\sum_{j=1}^{i-1} \prod_{k=1}^{j-1}\rho_k}{1- (1-h(1))\sum_{j=1}^i \prod_{k=1}^{j-1}\rho_k}. \end{equation}
In fact a lower bound will be sufficient in our case. 
We notice that the expression in \eqref{expr_harmo} is non-decreasing if any of the $\rho_k$'s is increased, and that  $\rho_k \leq \mu/\alpha$ for any $k$. We thus get
$$ \frac{h(i-1)}{h(i)}\geq \frac{1- (1-h(1))\sum_{j=1}^{i-1} \prod_{k=1}^{j-1}\mu/\alpha}{1- (1-h(1))\sum_{j=1}^i \prod_{k=1}^{j-1}\mu/\alpha}=
 \frac{1- (1-h(1))\frac{\alpha}{\alpha-\mu}\left(1 - \left(\frac{\mu}{\alpha}\right)^{i-1}\right)}{1- (1-h(1))\frac{\alpha}{\alpha-\mu}\left(1 - \left(\frac{\mu}{\alpha}\right)^{i}\right)}.  $$
 The last step consists in finding an equivalent of $1-h(1)$ for large $N$. As for any $k \leq 1/\eps^2$, 
$$ \frac{\mu}{\alpha} \leq \rho_k \leq \frac{\mu + \frac{f(N)}{2N\eps^2}}{\alpha\left(1- \frac{1}{N\eps^2}\right)}, $$
we obtain from classical results on hitting probabilites for Galton-Watson process (see \cite[Theorem III.4.1]{athreya1972branching}, or \cite[Example 1.3.3]{norris1998markov} together with the fact that a continuous-time supercritical binary branching process is a time-changed drifted simple random walk and that the hitting probabilities are invariant under the time change) 
$$\left( 1 - \frac{\mu + \frac{f(N)}{2N\eps^2}}{\alpha\left(1- \frac{1}{N\eps^2}\right)} \right) \left( 1 - \left(\frac{\mu + \frac{f(N)}{2N\eps^2}}{\alpha\left(1- \frac{1}{N\eps^2}\right)}\right)^{1/\eps^2}\right)^{-1}  \leq 1-h(1) \leq \left( 1 - \frac{\mu}{\alpha} \right) \left( 1 - \left(\frac{\mu}{\alpha}\right)^{1/\eps^2} \right)^{-1}. $$
We thus get that
\vspace{-0.2cm}
$$ 1-h(1) = \frac{\alpha-\mu}{\alpha} + \delta(1/N)\left(\frac{\mu}{\alpha}\right)^{1/\eps^2},$$
where $\delta(1/N)\to 0$ as $N \to \infty$,
which entails that for any $1 \leq i \leq 1/\eps^2$, 
$$  \frac{h(i-1)}{h(i)} = \frac{\alpha}{\mu}, + \delta(1/N)$$
\vspace{-0.2cm}
and consequently
\begin{equation*} \widehat{P}(i,i-1) =  \frac{\alpha}{\alpha+\mu}+ \delta(1/N). \end{equation*}
In other words, the process $Z_0$ conditioned to reach $0$ before $1/\eps^2$ behaves as a subcritical branching process with individidual birth and death rates $\mu/f(N)$ and $\alpha/f(N)$. With a probability close to one when $\eps$ is small, it thus takes a time smaller than $4f(N)/(\alpha-\mu)\ln (1/\eps)$ to reach $0$.
We will now prove that during this time, on the way of extinction, a number of order $1/\eps^2$ of individuals of the $Z_1$ population  are produced by mutation from $Z_0$ individuals.

Let us denote by $\P^{(Z_0,Z_1)}_{(i,j)}((k,l))$ the probability for the process $(Z_0,Z_1)$ to perform its first jump to $(k,l)$ when starting in $(i,j)$. We then have
\vspace{-0.2cm}
\begin{align*} &\P^{(Z_0,Z_1)}_{(i,j)}\left((i-1,j+1)\Big|H^{(N)}_0 < H^{(N)}_{1/\eps^2}\right)
=\frac{\P^{(Z_0,Z_1)}_{(i,j)}\left((i-1,j+1),H^{(N)}_0 < H^{(N)}_{1/\eps^2}\right)}{\P^{(Z_0,Z_1)}_{(i,j)}\left(H^{(N)}_0 < H^{(N)}_{1/\eps^2}\right)}\\
=&\frac{\P^{(Z_0,Z_1)}_{(i,j)}\left((i-1,j+1)\right)}{\P^{(Z_0,Z_1)}_{(i,j)}\left(H^{(N)}_0 < H^{(N)}_{1/\eps^2}\right)}\P^{(Z_0,Z_1)}_{(i,j)}\left(H^{(N)}_0 < H^{(N)}_{1/\eps^2}\Big|(i-1,j+1)\right)\\
=&\frac{\P^{(Z_0,Z_1)}_{(i,j)}\left((i-1,j+1)\right)}{\P^{(Z_0,Z_1)}_{(i,j)}\left(H^{(N)}_0 < H^{(N)}_{1/\eps^2}\right)}\P^{(Z_0,Z_1)}_{(i-1,j+1)}\left(H^{(N)}_0 < H^{(N)}_{1/\eps^2}\right)
=\P^{(Z_0,Z_1)}_{(i,j)}\left((i-1,j+1)\right)\frac{h(i-1)}{h(i)}, \end{align*}
with $h(i)$ as in \eqref{harm}. The last equality is a consequence of the autonomy of the law of $Z_0$, cf. Remark \ref{cor_rates_01}. In the same way, we can prove that 
$$ \P^{(Z_0,Z_1)}_{(i,j)}\left((i-1,j)\Big|H^{(N)}_0 < H^{(N)}_{1/\eps^2}\right)
=\P^{(Z_0,Z_1)}_{(i,j)}\left((i-1,j)\right)\frac{h(i-1)}{h(i)}.$$
This entails that 
$$ \frac{\P^{(Z_0,Z_1)}_{(i,j)}\left((i-1,j+1)\Big|H^{(N)}_0 < H^{(N)}_{1/\eps^2}\right)}{\P^{(Z_0,Z_1)}_{(i,j)}\left((i-1,j)\Big|H^{(N)}_0 < H^{(N)}_{1/\eps^2}\right)}=\frac{\P^{(Z_0,Z_1)}_{(i,j)}\left((i-1,j+1)\right)}{\P^{(Z_0,Z_1)}_{(i,j)}\left((i-1,j)\right)}. $$
In other words, conditioning on the event that $Z_0$ is on its way to extinction does not modify the proportion of those deaths in the process $Z_0$ which lead to a creation of a new individual in the $Z_1$ population. Hence, if we denote by $\mathscr{M}(\eps)$ the number of mutations from $Z_0$ to $Z_1$ after the last visit of $Z_0$ to  $\lfloor 1/\eps^2 \rfloor$, we have
$$ \mathscr{M}(\eps) \geq \sum_{i=1}^{1/\eps^2}{\rm Be}^{(i)} \left( \frac{\mu}{\mu + \frac{f(N)(i-1)}{2N}} \right), $$ 
where the ${\rm Be}^{(i)}(\lambda_i)$'s are independent Bernoulli random variables with parameter $\lambda_i$.
Indeed, in the considered time interval  there are at least $1/\eps^2$ deaths of individuals in the $Z_0$~population, and the parameter of $\rm{Be}^{(i)}$ is the probability that a jump from $i$ to $i-1$ in the $Z_0$~population leads to the arrival of a new individual in the $Z_1$~population.
For $\eps$ small enough, the random variable $\mathscr{M}(\eps)$  is thus larger than $1/(2\eps^2)$ with a probability tending to~1 as $N\to \infty$.
The evolution of the $Z_1$ population after the last visit of $Z_0$ to $\lfloor 1/\eps^2 \rfloor$ can thus with high probability be coupled to the offspring of $\lfloor 1/(2\eps^2) \rfloor $ immigrants arriving in a time interval of length $4f(N)/(\alpha-\mu)\ln (1/\eps)$, with their offspring suffering an individual death rate by competition from $Z_0$ individuals which is smaller than $1/N\eps^2$, as well as a reduction of their individual birth rate (compared to $\alpha$) that is smaller than 
$\alpha / (f(N)\eps^2 N)$.  This proves part a) of Lemma~\ref{lem_fast_ext}. 

To prove part b),  in view of Lemmata \ref{lem_coupling1} and \ref{exp_rate} it suffices to restrict to the event $\{\underline k_N > 0\}$. We can then work iteratively along the extinction times of the $Z_0$, \ldots $Z_{\underline k_N-1}$ populations, applying part a) by induction. 
\end{proof}




\bibliographystyle{amsplain}

\providecommand{\bysame}{\leavevmode\hbox to3em{\hrulefill}\thinspace}
\providecommand{\MR}{\relax\ifhmode\unskip\space\fi MR }
\providecommand{\MRhref}[2]{%
  \href{http://www.ams.org/mathscinet-getitem?mr=#1}{#2}
}
\providecommand{\href}[2]{#2}

\begin{acks}
We are grateful to Ralph Neininger for pointing us to \cite{AB05} in connection with Proposition~\ref{pi_eqals_p}, see Remark \ref{geome}. We also thank Antar Bandyopadhyay for related discussions, Mariana Gomez for showing us the link between our model and tournament selection, Jan Lukas Igelbrink and Peter Pfaffelhuber for helpful comments, and an anonymous reviewer for a very careful reading and a number of constructive queries and suggestions that helped to improve the presentation.  Our sincere thanks go to the Mathematical Research Institute Oberwofach where core parts of this project were carried out in a joint stay as Oberwolfach research fellows in summer 2022. AGC and AW acknowledge the hospitality of INRAE and Institut Fourier in October 2021; during this joint visit (funded by the French program Investissement
d'avenir), substantial preparatory studies were performed. AGC acknowledges the hospitality of Goethe University during his visit to Frankfurt in September 2022.
\end{acks}


\end{document}